\documentclass[12pt]{amsart}

\usepackage{etex}
\usepackage[english]{babel}
\usepackage[cp1250]{inputenc}
\usepackage{amssymb,amsthm,amsfonts,amsmath}
\usepackage{mathrsfs}
\usepackage{verbatim}
\usepackage{ulem}
\usepackage{mathtools}

\usepackage{url}
\usepackage{amsopn}
\usepackage{graphicx}
\usepackage[usenames, dvipsnames]{xcolor}
\DeclareGraphicsExtensions{.pdf,.png,.jpg}

\usepackage[shortlabels]{enumitem}
\usepackage{titletoc}
\definecolor{darkblue}{rgb}{0.0, 0.0, 0.55}
\usepackage[colorlinks,linkcolor=BrickRed,citecolor=OliveGreen,urlcolor=darkblue,hypertexnames=true,backref=true]{hyperref}
\usepackage[a4paper,margin=2.5cm]{geometry}
\hfuzz=5.002pt

\linespread{1.10}

\setcounter{tocdepth}{3}

\DeclareMathOperator{\mconv}{mconv}

\numberwithin{equation}{section}

\newcommand{\C}{\mathbb C}

\newcommand{\RR}{\mathbb R}

\newcommand{\CC}{\mathbb C}

\newcommand{\cD}{\mathcal D}

\newcommand{\cP}{\mathcal P}

\newcommand{\cS}{\mathcal S}
\newcommand{\cT}{\mathcal T}

\newcommand{\cW}{\mathcal W}

\newcommand{\free}{\partial^{\mathrm{free}}}
\newcommand{\matex}{\partial^{\mathrm{mat}}}
\newcommand{\Euc}{\partial^{\mathrm{Euc}}}

\newcommand{\eric}{\color{orange}}

%
%
%
\newtheorem{theorem}{Theorem}[section]
\newtheorem{corollary}[theorem]{Corollary}
\newtheorem{lemma}[theorem]{Lemma}
\newtheorem{proposition}[theorem]{Proposition}
\newtheorem{question}{Question}

\theoremstyle{definition}
\newtheorem{definition}[theorem]{Definition}
\newtheorem{remark}[theorem]{Remark}
\newtheorem{example}[theorem]{Example}


\title[MCS over the Euclidean ball and polar duals of real free spectrahedra]{Matrix convex sets over the Euclidean ball and polar duals of real free spectrahedra}

\author{Eric Evert}
\author{Benjamin Passer}

\newcommand{\Addresses}{{
    \bigskip
    \footnotesize

    \noindent\textsc{Department of Mathematics, University of Florida, Gainesville, FL, United States}\par\nopagebreak \textit{E-mail address}: \texttt{ericevert@ufl.edu}

    \medskip \medskip
    
    \noindent \textsc{Department of Mathematics, United States Naval Academy, Annapolis, MD, United States}\par\nopagebreak \textit{E-mail address}: \texttt{passer@usna.edu}
 }}

\begin{document}

\begin{abstract}
We show that the free spectrahedron determined by universal anticommuting self-adjoint unitaries is not equal to the minimal matrix convex set over the ball in dimension three or higher. This example, as well as other matrix convex sets over the ball, then provides context for structure results on the extreme points of coordinate projections. In particular, we show that the free polar dual of a real free spectrahedron is rarely the projection of a real free spectrahedron, contrasting a prior result of Helton, Klep, and McCullough over the complexes. We use this to show that spanning results for free spectrahedra that are closed under complex conjugation do not extend to free spectrahedrops that meet the same assumption. These results further clarify the role of the coefficient field.
\end{abstract}

\maketitle

\vspace*{-.19 in} 

\noindent \textit{Keywords}: matrix convex set, free spectrahedron, free spectrahedrop, linear matrix inequality, free extreme point, free polar dual

\vspace*{.1 in} 

\noindent \textit{MSC 2020}: 47A12, 47A20, 47L07

\vspace*{-.08 in} 

\section{Introduction}

Dilation theory provides a framework through which a matrix or operator may be replaced with a more straightforward counterpart, without sacrificing any key structure of the original. A key example is the demonstration of von Neumann's inequality for contractions through dilation to unitaries; see \cite[\S 1]{Sh21} for an introduction to these ideas. In the study of matrix convexity, dilations play a key role in finding special classes of extreme points \cite{kls14}, as in the dual setting of operator systems \cite{Arv08, DK15}. These extreme points are studied especially in the setting of free spectrahedra, where a restriction of finite-dimensionality for the extreme points is especially relevant \cite{EH, EHKM}. 

Spectrahedra have a much richer structure than their counterparts, polyhedra. For instance, while the closed Euclidean disk is certainly not a polyhedron, the linear matrix inequality  
\[
\begin{bmatrix} 1 & 0 \\ 0 & -1 \end{bmatrix} x_1 \, + \, \begin{bmatrix} 0 & 1 \\ 1 & 0 \end{bmatrix} x_2 \,\, = \,\, \begin{bmatrix} x_1 & x_2 \\ x_2 & -x_1 \end{bmatrix} \, \preceq \, I
\]
determines the spectrahedron $\overline{\mathbb{D}}$. Such a linear matrix inequality naturally extends to the dimension-free setting by allowing matrices to enter the inequality through the Kronecker product. That is, one obtains a free spectrahedron by considering those self-adjoint matrix pairs $(X_1,X_2)$ such that 
\[ \begin{bmatrix} 1 & 0 \\ 0 & -1 \end{bmatrix} \otimes X_1 \, + \, \begin{bmatrix} 0 & 1 \\ 1 & 0 \end{bmatrix} \otimes X_2 \,\, = \,\, \begin{bmatrix} X_1 & X_2 \\ X_2 & -X_1 \end{bmatrix} \, \preceq \, I.\]
 The resulting free spectrahedron is a matrix convex set whose first level is the disk. This inequality does not define the only matrix convex set over the closed disk; however, it notably defines the \textit{minimal} such example by \cite[Proposition 14.14]{HKMS19}.

It is common to understand convex sets through their extreme points. However, matrix convex sets admit several distinct notions of extreme point \cite{EHKM}, with different spanning properties; see \cite{EPS_expo} for a summary. A defining feature of minimal matrix convex sets is that many of these notions coincide and, moreover, only occur at the first matricial level. Thus, showing the existence of extreme points in the dimension-free sense at higher matricial levels is a method for certifying that a matrix convex set is not minimal. In Theorem \ref{thm:RealSpinBallFreeExtreme}, we use this approach to show that the natural extension of the above linear matrix inequality to three or more variables does not determine the minimal matrix convex set over the closed Euclidean ball.

Another key tool in convexity is the polar dual. This tool extends to the free polar dual in the dimension-free setting and has been particularly useful in the study of free spectrahedra. For example, \cite[Theorem 4.11]{HKM17} shows that, over the complexes, the class of projections of free spectrahedra is closed under free polar duality. That is, the free polar dual of the projection of a complex free spectrahedron is the projection of a complex free spectrahedron. We show in Theorem \ref{cor:drop_polyhedron_bounds} that this result does not hold in the real setting. More precisely, generalizing \cite[Corollary 6.3]{EHKM}, we show that if the free polar dual of a bounded real free spectrahedron is the projection of a real free spectrahedron, then the first level must be a polyhedron. We later use this fact to prove the existence of closed bounded free spectrahedrops that are closed under complex conjugation but have no free extreme points. 

The remainder of the introduction showcases our notation and some key definitions. Section \ref{sec:ball} examines matrix convex sets over the Euclidean ball. In particular, Theorem \ref{thm:RealSpinBallFreeExtreme} describes a change in behavior between $2$ variables and $3$ or more variables for the smallest such matrix convex set. We then prove results about spectrahedrops in section \ref{sec:drops}, using our previous examples to clarify boundary cases. Corollary \ref{corollary:SpectrahedropWithNoFreeExtreme} shows that free spectrahedrops that are closed under complex conjugation need not have any free extreme points, in stark contrast to free spectrahedra. Such examples further clarify the role of the coefficient field, especially as one extends a free spectrahedrop into a free spectrahedron in more variables. Along the way, we pose multiple open questions that are of interest.

\subsection{Preliminaries and notation}

\begin{definition}
Let $\mathbb{F} \in \{ \mathbb{R}, \mathbb{C}\}$, and let $A = (A_1, \ldots, A_g) \in SM_n(\mathbb{F})^g$, where $SM_n(\mathbb{F})^g$ denotes the collection of $g$-tuples of $n \times n$ self-adjoint matrices with entries in $\mathbb{F}$. Then the monic linear pencil
\[ L_A(X) \, := \, I - A_1 \otimes X_1 - \ldots - A_g \otimes X_g \]
determines the \textbf{free spectrahedron}
\[ \cD_A^{\mathbb{F}} \,\, = \,\, \bigcup\limits_{n=1}^\infty \cD_A^{\mathbb{F}}(n) \,\, = \,\, \bigcup\limits_{n=1}^\infty \{ X \in SM_n(\mathbb{F})^g: L_A(x) \succeq 0 \}.\]
If the coefficient field is omitted, we assume $\mathbb{C}$. We will often make use of the strictly linear part of the pencil, which is denoted
\[
\Lambda_A(X) \, := \, A_1 \otimes X_1 + \ldots + A_g \otimes X_g.
\]
\end{definition}

A complex free spectrahedron need not be closed under entrywise complex conjugation. However, any complex free spectrahedron that is closed under complex conjugation admits a real coefficient tuple $A$ that represents it, in such a way that the structure of $\cD_A^{\mathbb{R}}$ and $\cD_A^{\mathbb{C}}$ are closely tied (see \cite[Lemma 3.3]{EH} and \cite[Theorem 2.4]{Eve23}), and both sets admit clean Krein-Milman theorems for free extreme points by \cite[Theorem 1.3]{EH}. See Definition \ref{def:extreme_points_summary} below for the definitions of these terms. However, the structure of complex free spectrahedra that are not closed under complex conjugation is quite different, as in \cite[Example 6.30]{Kr} and \cite[\S 2]{Pas22}. Hence, we typically work in complex coefficients and clarify as necessary.

Free spectrahedra are important examples of matrix convex sets. For simplicity, all references to matrix convexity in what follows will refer to matrix convex sets over Euclidean space, as in the following definition.

\begin{definition}
Fix $d \geq 1$ and let $K = \bigcup\limits_{n=1}^\infty K_n \subseteq \bigcup\limits_{n=1}^\infty M_n^d$. Then $K$ is called \textbf{matrix convex} if $K$ is closed under direct sums and under images of unital completely positive (UCP) maps. Equivalently, $K$ is closed under direct sums and compressions. Equivalently, $K$ is closed under \textbf{matrix convex combinations} of $X^{(1)}, \ldots, X^{(m)} \in K$, which are sums $\sum\limits_{i=1}^m  V_i^* X^{(i)} V_i$  where the $V_i$ are $n_i \times n$ matrices with $\sum\limits_{i=1}^n V_i^* V_i = I_n.$
\end{definition}

While we will not need the noncommutative convexity theory of \cite{DK}, which allows for infinite levels, we colloquially refer to \lq\lq level infinity\rq\rq\hspace{0pt} of a matrix convex set whenever this simplifies matters. See also \cite{BM_mcs, BR_op} for a related study using the real coefficient field. Note that the above definition does not require $K$ to consist of self-adjoints, and it is natural to break a non self-adjoint matrix convex set into self-adjoint components in the typical way. This is usually a matter of convenience, but we caution the reader as in \cite[Remark 2.2]{Pas22} that the discussion of the coefficient field of a free spectrahedron is always done in the self-adjoint setting. Similarly, regardless of the coefficient field, our discussions of the free polar dual will be in the self-adjoint setting only.

Matrix convex sets over Euclidean space are dual to finite-dimensional operator systems (see \cite[Chapter 13]{Paulsen_book}), and one way to package this information is in the matrix range of a tuple of operators.

\begin{definition}
Let $T = (T_1, \ldots, T_d) \in B(H)^d$. Then the {\textbf{matrix range}} of $T$ is the levelwise closed and bounded matrix convex set defined by
\[ \cW(T) \,\, = \,\, \bigcup\limits_{n=1}^\infty \cW_n(T) \,\, = \,\, \bigcup\limits_{n=1}^\infty \{\phi(T): \phi: B(H) \to M_n \text{ is UCP}\}.\]
\end{definition}

In fact, every closed and bounded matrix convex set may be expressed as $\cW(T)$ for some tuple of operators. Uniqueness properties of this tuple are typically only found under rigid conditions concerning compactness \cite{DDOSS17, PS19, DP22}, and these results are related to earlier uniqueness results in free spectrahedra \cite{EHKM, HKM13, Zal17}. When $A$ is a tuple of matrices, $\cW(A)$ is in fact the matrix convex hull of $A$, which we denote by $\text{mconv}(A)$. Moreover, if $A$ is appropriately positioned, $\cW(A)$ and $\cD_A$ are related by the free polar dual, as we will describe below.

In this manuscript, we consider the self-adjoint free polar dual only, for simplicity. We also simplify notation in what follows by letting 
\[ SM^g \,\, := \,\, \bigcup\limits_{n=1}^\infty SM_n^g \]
denote the collection of $g$-tuples of self-adjoint matrices. As with previous notation choices, the complex field is assumed if one is not specified.

\begin{definition}
Let $K$ be a matrix convex set in $g$ self-adjoint variables. Then $K^\circ$, the \textbf{free polar dual} of $K$, is defined as the matrix convex set in $g$ self-adjoint variables given by
\[ K^\circ \,\, := \,\, \left\{X \in SM^g: \,\ \forall \, Y \in K, \,\, \sum\limits_{i=1}^g Y_i \otimes X_i \, \preceq \, I \right\}.\]
\end{definition}

If $A$ is a tuple of self-adjoint matrices such that $0 \in \cW_1(A)$, then \cite[Proposition 3.1 and Lemma 3.2]{DDOSS17} show that
\[ \cD_A^\circ = \cW(A) \hspace{.5 in} \text{ and } \hspace{.5 in} \cW(A)^\circ = \cD_A,\]
where again we recall that $\cW(A) = \text{mconv}(A)$ for matrix tuples. As such, there is a connection between uniqueness theorems for matrix ranges and uniqueness theorems for free spectrahedra, as referenced above.

\begin{definition}
Let $K$ be a matrix convex set in $h$ self-adjoint variables, and let $g < h$. Then $\mathcal{P}_g K$ denotes the \textbf{coordinate projection} of $K$ onto the first $g$ coordinates. That is, $X \in \cP_g K$ if and only if there exists $Y$ (a length $h-g$ tuple of self-adjoint matrices the same dimension as the $X_i$) such that $(X, Y) \in K$. The projection of a free spectrahedron is called a \textbf{free spectrahedrop}. 
\end{definition}

The study of free spectrahedrops is more delicate than that of free spectrahedra. While free spectrahedra are the only free semialgebraic matrix convex sets by \cite[Theorem 1.4]{HM12}, not every free spectrahedrop is a free spectrahedron by \cite[Proposition 9.8]{HM12}. Moreover, the free polar dual of a complex free spectrahedrop with nonempty interior is also a complex free spectrahedrop by \cite[Corollary 4.17]{HKM17}, which implies that any matrix convex hull of a matrix tuple, so long as it includes the point $0$, is also a complex free spectrahedrop. 

The above is in stark contrast with free spectrahedra themselves; very few examples of free spectrahedra whose free polar duals are free spectrahedra are known. See, e.g., \cite[\S 6]{EHKM}. In this article, we present a new class of complex free spectrahedra whose free polar duals are free spectrahedra in Theorem \ref{theorem:DualFreeSpecs}. In a special case, the complex free spectrahedron is in fact self-dual; see Theorem \ref{thm:Paulidual}. In contrast, as a consequence of \cite[Proposition 6.1 and Corollary 6.3]{EHKM} a real free spectrahedron can never be self-dual. See Remark \ref{rem:nobigdual} for further discussion.

As in classical convexity, matrix convex sets and their counterparts are studied through their extreme points. However, in this setting there are multiple types of extreme point to consider, with various spanning properties.

\begin{definition}\label{def:extreme_points_summary}
Let $K$ be a matrix convex set, and let $X \in K_n$.
\begin{enumerate}
    \item $X$ is called a \textbf{Euclidean extreme point} or \textbf{classical extreme point} of $K$ (denoted $X \in \Euc K$) if $X$ is an extreme point of the convex set $K_n$.
    \item $X$ is called a \textbf{matrix extreme point} of $K$ (denoted $X \in \matex K$) if whenever $K$ is expressed as a matrix convex combination of points $X^{(i)} \in K$, with each $V_i \not= 0$ and each $X^{(i)}$ of matrix dimension at most $n$, then each $X^{(i)}$ is unitarily equivalent to $X$.
    \item $X$ is called a \textbf{free extreme point} of $K$ (denoted $X \in \free K$) if whenever $K$ is expressed as a matrix convex combination of points $X^{(i)} \in K$, with each $V_i \not= 0$, then each $X^{(i)}$ is either unitarily equivalent to $X$ or is unitarily equivalent to a reducible matrix tuple with $X$ as a summand.
\end{enumerate}
\end{definition}

 In the case of bounded complex conjugation closed free spectrahedra, free extreme points are of particular note, as they form the minimal set that spans the free spectrahedron through matrix convex combinations (see \cite[Theorem 1.3]{EH}). We will show in Corollary \ref{corollary:SpectrahedropWithNoFreeExtreme} that this spanning result for free extreme points of complex conjugation closed free spectrahedra does not extend to complex conjugation closed free spectrahedrops. 

General closed bounded matrix convex sets are the closed matrix convex hull of their matrix extreme points by \cite[Corollary 3.6]{WW}, and in fact matrix extreme points correspond to finite-dimensional pure UCP maps by \cite[Theorem B]{FarenickExtreme}. Free extreme points (formerly called \textbf{absolute extreme points}) correspond to finite-dimensional boundary representations by \cite[Theorem 1.2]{kls14}. Thus, in analogy with \cite{Arv08}, free extreme points are the maximal irreducible elements of $K$. That is, $X = (X_i)_{i=1}^g \in K$ is free extreme if and only if $X$ is irreducible and the only dilations $Y_i = \begin{bmatrix} X_i & A_i \\ B_i & C_i \end{bmatrix}$ with $Y = (Y_i)_{i=1}^g \in K$ are the trivial dilations (each $A_i$ and $B_i$ is zero). 

A maximal element that is not necessarily irreducible is called an \textbf{Arveson extreme point}, or equivalently an Arveson extreme point is a direct sum of finitely many free extreme point(s). This relaxation is useful when determining points of $K$ using an algebraic identity, where the insistence on irreducibility is artificial. It also allows for dilation results to be stated more cleanly, as every matrix convex combination may be expressed as the compression of a direct sum. For instance,
\begin{equation}\label{eq:arv_equivalence} K = \mconv(\free K) \,\,\, \iff \,\,\, \text{every } X \in K \text{ dilates to an Arveson extreme point}. \end{equation}
This point of view is taken, for example, in \cite{EH}.

We caution the reader that the matrix convex hull in (\ref{eq:arv_equivalence}) does \textit{not} include a closure operation, and that the above extreme points are finite-dimensional in nature (unlike general boundary representations). For free extreme points, the assumption of finite-dimensionality makes the associated Krein-Milman or Caratheodory problems more restrictive.

Our results in Section \ref{sec:ball} focus on using free extreme points to distinguish matrix convex sets over the ball, including examples that come naturally from a non self-adjoint presentation. In Section \ref{sec:drops}, we focus on the extreme points of coordinate projections, using our study of the ball to generate boundary cases for structure theorems. We in particular show that the free polar dual of a complex conjugation closed free spectrahedron is rarely the projection of a complex conjugation closed free spectrahedron. We then use this result to construct free spectrahedrops that are closed under complex conjugation but have no free extreme points, illustrating that the spanning result of \cite{EH} for complex conjugation closed free spectrahedra does not extend to free spectrahedrops.

\section{Anticommutation}\label{sec:ball}

Anticommuting matrices are key in the study of spin systems, as in \cite{AP03} and \cite{FarenickSpinOriginal, FarenickSpinCorrected}, and they have also been used as boundary cases for dilation problems, as in \cite{DDOSS17} and \cite{PasShaSol18}. The pair $\left( \begin{bmatrix} 1 & 0 \\ 0 & -1 \end{bmatrix}, \begin{bmatrix} 0 & 1 \\ 1 & 0 \end{bmatrix} \right)$ not only contains self-adjoint unitary matrices that anticommute with each other, but the pair is universal among all pairs of operators meeting the same relations. This may be seen by noting that the $C^*$-algebra generated by the pair has the maximum possible vector space dimension that the relations allow, namely $2^2 = 4$. 

This idea extends to larger length tuples by a well-known recursive procedure. If $(F_1, \ldots, F_g)$ are universal, pairwise anticommuting, self-adjoint unitaries, then we consider the tuple
\[ \left( F_1 \otimes \begin{bmatrix} 1 & 0 \\ 0 & -1 \end{bmatrix}, F_2 \otimes \begin{bmatrix} 1 & 0 \\ 0 & -1 \end{bmatrix}, \ldots, F_g \otimes \begin{bmatrix} 1 & 0 \\ 0 & -1 \end{bmatrix}, I \otimes \begin{bmatrix} 0 & 1 \\ 1 & 0 \end{bmatrix} \right)\]
to achieve the same universal property in $g+1$ variables.

\begin{definition}
Fix $g \geq 2$. Let 
\[ F^{[g]} = (F_1, F_2, \ldots, F_g) \]
denote the $g$-tuple of universal, pairwise anticommuting, self-adjoint unitary operators found using the above tensor product procedure. When the tuple length is understood, we denote this tuple by $F$.
\end{definition}

For $g = 2$, the free spectrahedron $\cD_{F^{[2]}}$ is equal to the minimal matrix convex set over the closed disk by \cite[Proposition 14.14]{HKMS19}, so equivalently, its free polar dual $\cW(F^{[2]})$ is equal to the maximal matrix convex set over the closed disk. However, it should also be noted that $F_1 + iF_2$ is unitarily equivalent to $\begin{bmatrix} 0 & 2 \\ 0 & 0 \end{bmatrix} = 2E_{12}$, so these results are also corollaries of Ando's much earlier characterization of operators with numerical radius $1$ in \cite{Ando} (see \cite[Theorem 5.12]{FarenickSpinOriginal} for one description).

For $g \geq 3$, the tensor presentation of $F^{[g]}$ is not irreducible over $\mathbb{C}$. Specifically, $C^*(F^{[3]}) = M_2 \oplus M_2$ and 
\begin{equation} F^{[3]} \cong P \oplus \overline{P}, \end{equation}
where
\begin{equation}\label{eq:PauliMatrices} P = (P_1, P_2, P_3) = \left( \begin{bmatrix} 1 & 0 \\ 0 & -1 \end{bmatrix}, \begin{bmatrix} 0 & 1 \\ 1 & 0 \end{bmatrix}, \begin{bmatrix} 0 & i \\ -i & 0 \end{bmatrix} \right) \end{equation}
and $\overline{P}$ denotes the entrywise complex conjugate, meaning $\overline{P} = (P_1, P_2, -P_3)$. As a consequence, one has
\[
\cD_{F^{[3]}} = \cD_{P \oplus \overline{P}} = \cD_P \cap \cD_{\overline{P}}.
\]

 Whether or not the $g$-tuple $F$ for $g \geq 3$ has similar properties as for $g = 2$ is an open problem discussed in \cite[(4.12)]{Pas19} and similarly recast in the text directly after \cite[Theorem 5.12]{FarenickSpinOriginal}.

\begin{question}\label{ques:themain}
For $g \geq 3$, is $\cD_{F}$ equal to the smallest matrix convex set whose first level is the closed ball, denoted $\mathcal{W}^{\text{min}}(\overline{\mathbb{B}_d})$? Equivalently, is $\cW(F)$ equal to the largest matrix convex set over the closed ball, denoted $\mathcal{W}^{\text{max}}(\overline{\mathbb{B}_d})$?
\end{question}

We resolve this question negatively in Theorem \ref{thm:RealSpinBallFreeExtreme} and Corollary \ref{cor:spinwrapup}. The nature of the question is more clear with some additional structure of the Pauli matrices in plain view.

\begin{theorem}\label{thm:Paulidual}
The free spectrahedron $\cD_P$ satisfies $\cD_P = \cD_P^\circ$. Similarly, $\cD_{\overline{P}} = \cD_{\overline{P}}^\circ$. As a consequence, up to unitary equivalence, $P$ is the only free extreme point of $\cD_P$, and $\cD_P$ is the matrix convex hull of its free extreme points.
\end{theorem}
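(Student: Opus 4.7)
The plan is to reduce the self-duality statement $\cD_P = \cD_P^\circ$ to the equality $\cD_P = \cW(P)$. Because each Pauli matrix is traceless, $0 \in \cW_1(P) = \overline{\mathbb{B}_3}$, and the cited results \cite[Proposition 3.1 and Lemma 3.2]{DDOSS17} give $\cD_P^\circ = \cW(P)$ and $\cW(P)^\circ = \cD_P$. The easy inclusion $\cW(P) \subseteq \cD_P$ reduces to verifying $P \in \cD_P$, which I would do via the SWAP identity $\sum_i P_i \otimes P_i = 2S - I$ (forcing $I - \sum_i P_i \otimes P_i = 2(I - S) \succeq 0$). The content is the reverse inclusion $\cD_P \subseteq \cW(P)$.

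For this, I would invoke Choi's theorem using $C^*(P) = M_2$: a tuple $Y$ lies in $\cW_n(P)$ iff there is a UCP map $\phi : M_2 \to M_n$ with $\phi(P_i) = Y_i$, equivalently iff the Choi matrix of $\phi$ is positive semidefinite. Rewriting the matrix units of $M_2$ in the Pauli basis (e.g.\ $2E_{11} = I + P_1$ and $2E_{21} = P_2 + iP_3$) and evaluating, one finds
\[
C \,=\, \frac{1}{2}\begin{pmatrix} I + Y_1 & Y_2 - iY_3 \\ Y_2 + iY_3 & I - Y_1 \end{pmatrix} \,=\, \frac{1}{2}\bigl(I + \overline{P}_1 \otimes Y_1 + \overline{P}_2 \otimes Y_2 + \overline{P}_3 \otimes Y_3\bigr),
\]
where $\overline{P} = (P_1, P_2, -P_3)$ denotes the entrywise conjugate. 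Hence $Y \in \cW_n(P)$ iff $I + \sum_i \overline{P}_i \otimes Y_i \succeq 0$.

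The bridge between this condition and the defining pencil $I - \sum_i P_i \otimes Y_i$ of $\cD_P$ is the unitary $U = iP_3 \in M_2$. The Pauli anticommutation relations $P_3 P_j = -P_j P_3$ for $j \in \{1,2\}$ and $P_3^2 = I$ yield $U P_1 U^* = -P_1$, $U P_2 U^* = -P_2$, and $U P_3 U^* = P_3$. Conjugating the pencil of $\cD_P$ by $U \otimes I_n$ therefore produces exactly $I + \sum_i \overline{P}_i \otimes Y_i$, so the two positivity conditions coincide and $\cD_P = \cW(P) = \cD_P^\circ$. For $\overline{P}$ the same $U = iP_3$ works, since it anticommutes with $\overline{P}_1, \overline{P}_2$ and commutes with $\overline{P}_3$.

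Finally, once $\cD_P = \mconv(P)$ is established, the extreme-point consequence follows from the identification of free extreme points with finite-dimensional boundary representations in \cite[Theorem 1.2]{kls14}: since $C^*(P) = M_2$ is simple with a unique irreducible representation (the identity) up to unitary equivalence and $P$ is irreducible, $P$ realizes a boundary representation and is a free extreme point of $\cD_P$, and any other free extreme point of $\cD_P$ corresponds to an irreducible finite-dimensional representation of $M_2$ and must be unitarily equivalent to $P$. The main obstacle is spotting the duality unitary $U = iP_3$ and recognizing the Choi matrix in the compact form $\tfrac12\bigl(I + \sum_i \overline{P}_i \otimes Y_i\bigr)$; with those two observations, both the self-duality and the free extreme point statements reduce to direct computation.
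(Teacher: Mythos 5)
Your proposal is correct and follows essentially the same route as the paper's proof: reduce to $\cD_P = \cW(P)$ via \cite[Proposition 3.1 and Lemma 3.2]{DDOSS17}, characterize $\cW(P)$ through Choi's theorem by expanding the matrix units of $M_2$ in the Pauli basis to get the condition $I + \sum_i \overline{P_i} \otimes Y_i \succeq 0$, and then conjugate by $P_3 \otimes I$ (your $U = iP_3$ differs only by a harmless scalar) to match the defining pencil of $\cD_P$. The only additions are cosmetic: the SWAP-identity check of $P \in \cD_P$ is subsumed by the equivalence of the two positivity conditions, and your appeal to \cite[Theorem 1.2]{kls14} plus simplicity of $M_2$ is a valid way to carry out the extreme-point step the paper labels straightforward.
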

\begin{proof}
Consider the matrix range $\cW(P)$. Since $\cW(P)$ consists of the UCP images $X = \phi(P)$ where $\phi$ maps into a matrix algebra, we note that the operator system spanned by $P$ is $M_2$, so these UCP images are characterized by Choi's theorem \cite[Theorem 2]{Choi}. In particular, since 
\[ E_{11} = \cfrac{I_2 + P_1}{2}, \,\,\,\,\,\,\, E_{12} = \cfrac{P_2 - iP_3}{2}, \,\,\,\,\,\,\, E_{21} = \cfrac{P_2 + iP_3}{2}, \,\,\,\,\,\,\, E_{22} = \cfrac{I_2 - P_1}{2}, \]
a UCP image $X = \phi(P)$ is characterized by the condition
\[ \begin{bmatrix} \cfrac{I_2 + X_1}{2} & \cfrac{X_2 - iX_3}{2} \\ \cfrac{X_2 + iX_3}{2} & \cfrac{I_2 - X_1}{2} \end{bmatrix} \,\,\, \succeq \,\,\, 0,\]
which is equivalent (after scaling by $2$) to
\[ I \, + \, P_1 \otimes X_1 + P_2 \otimes X_2 - P_3 \otimes X_3 \,\,\, \succeq \,\,\, 0.\]
Because $P_3$ anticommutes with both $P_1$ and $P_2$, conjugating both sides by $P_3 \otimes I$ shows that this inequality is equivalent to
\[ I - P_1 \otimes X_1 - P_2 \otimes X_2 - P_3 \otimes X_3 \,\,\, \succeq \,\,\, 0.\]
That is, $X \in \cW(P)$ if and only if $X \in \cD_P$. A similar argument applies to the complex conjugate.

As previously discussed, the claim $\cD_P^\circ = \cW(P) = \mconv(P)$ follows from \cite[Proposition 3.1 and Lemma 3.2]{DDOSS17}, so we now have $\cD_P = \mconv(P)$. It is then straightforward to show that $P$ a free extreme point of $\cD_\cP$, and that every free extreme point of $\cD_P$ is unitarily equivalent to $P$. 
\end{proof}

Self-duality of a matrix convex set with respect to the self-adjoint free polar dual is not the same as the requirement that the corresponding operator system and its dual are completely order isomorphic. However, it is notable that the operator system $M_n$ is self-dual for all $n$ by \cite[Example 2.2]{Ng-P-16}, and when $n = 2$, \cite[Remark 3.5]{Ng-P-16} shows that the minimum norm product $\|\phi\|_{cb} \cdot \|\phi^{-1}\|_{cb} = 2$ is achieved for a complete order isomorphism $\phi$ between $M_2$ and its operator system dual. This provides context for Theorem \ref{thm:Paulidual}, as both $P$ and $\overline{P}$ span $M_2$ as an operator system. For $n > 2$, the same reference notes that $M_n$ and its dual operator system are still completely order isomorphic, but with maps that do not achieve the minimum possible product of norms. A similar phenomenon appears in our own results; the behavior witnessed in Theorem \ref{thm:Paulidual} does not extend to any tuple inside a higher matrix algebras $M_n$, where that tuple spans $M_n$ as an operator system (see Remark \ref{rem:nobigdual}).

Direct computation verifies that $\overline{P} \not\in \cD_P$ (in fact, this claim is equivalent to the fact that the transpose map on $2 \times 2$ matrices is not completely positive), $-P \not\in \cD_P$, and $(P_1, P_2, 0) \not\in \cD_P$, so the complex free spectrahedron $\cD_P$ is lacking in key symmetries that are present in $\cD_F = \cD_{P \oplus \overline{P}} = \cD_P \cap \cD_{\overline{P}}$. Notably, multiple symmetries found in the first level of $\cD_P$, which is $\overline{\mathbb{B}_3}$, are lacking in the matrix convex set $\cD_P$ as a whole. 

In fact, because $\cD_{F^{[3]}} = \cD_P \cap \cD_{\overline{P}} = \cW(P) \cap \cW(\overline{P})$ and the unital linear map sending $P \mapsto \overline{P}$ is the transpose map, elements of $\cD_{F^{[3]}}$ are images of $P$ under positive partial transpose (PPT) maps $\phi: M_2 \to M_n$ for various $n$. By definition, these are maps $\phi$ such that both $\phi$ and $\phi \circ T$ are UCP maps. The structure of PPT maps $\phi: M_2 \to M_n$ changes behavior at $n = 4$: $2 \times n$ PPT maps need not be separable for $n \geq 4$ by the results of \cite{Horo_1}. This strongly suggests that Question \ref{ques:themain} has a negative answer. However, an examination along these lines would not evidently give a clear description of free extreme points, which are crucial in the study of free spectrahedra. We give explicit examples of noncommuting free extreme points of $\cD_{F^{[3]}}$ in Theorem \ref{thm:RealSpinBallFreeExtreme}, of matrix dimension 4 and 6, as part of a negative resolution.

\begin{remark}
Theorem \ref{thm:Paulidual} should be contrasted with \cite[\S 9]{DDOSS17}, which studies the self-dual matrix ball,  
\begin{equation} \label{eq:selfdualDDSS} \mathfrak{D}_g \,\, = \,\, \left\{ X \in SM^g : \left\|\sum\limits_{i=1}^g X_i \otimes \overline{X_i} \, \right\| \, \leq 1  \right\}. \end{equation}
This set is characterized in \cite[Lemma 9.2 and Remark 9.3]{DDOSS17} as the unique matrix convex set that is closed under complex conjugation, has $\pm 1$ symmetry, and is self-dual with respect to the self-adjoint free polar dual. Theorem \ref{thm:Paulidual} shows that the symmetry or conjugation assumption is necessary. It should also be noted that $\mathfrak{D}_g$ is tied to the self-dual operator system $SOH$ of \cite{Ng-P-16}, as seen in the computations of \cite[Proposition 3.3]{Ng-P-16}.

Note in particular that $\mathfrak{D}_g$ cannot be a free spectrahedron. The fact that $\mathfrak{D}_g$ is closed under complex conjugation and is self-dual would show via \cite[Proposition 6.1 and Corollary 6.3]{EHKM} that if $\mathfrak{D}_g$ is a free spectrahedron, then its first level must be a polytope. However, the first level of $\mathfrak{D}_g$ is evidently the closed Euclidean ball. Said differently, the self-duality of the complex free spectrahedron $\cD_P$ in Theorem \ref{thm:Paulidual} underscores the fact that in other free spectrahedron structure results, the choice of the real coefficient field (or similarly an assumption of closedness under complex conjugation) is absolutely crucial.
\end{remark}

\begin{proposition}
$\cW(\overline{P}) = -\cW(P)$. Equivalently, $\cD_{\overline{P}} = -\cD_P$.
\end{proposition}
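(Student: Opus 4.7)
My plan is to prove the free spectrahedron statement $\cD_{\overline{P}} = -\cD_P$ directly, and then obtain the matrix range statement $\cW(\overline{P}) = -\cW(P)$ for free via the self-adjoint free polar dual. The equivalence of the two statements follows from Theorem \ref{thm:Paulidual} together with the identities $(-K)^\circ = -K^\circ$ and $\cD_P^\circ = \cW(P)$; the latter holds since $0$ sits in the first level of $\cW(P)$ (the Pauli matrices are traceless, so the normalized trace witnesses this).

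The key observation is that $\overline{P}$ is unitarily equivalent to $-P$. Conjugation by the self-adjoint unitary $P_3$ gives $P_3 P_1 P_3 = -P_1$, $P_3 P_2 P_3 = -P_2$, and $P_3 P_3 P_3 = P_3$, since $P_3$ anticommutes with $P_1$ and $P_2$. Multiplying through by $-1$, we obtain
\[
P_3 (-P_1) P_3 = P_1, \qquad P_3 (-P_2) P_3 = P_2, \qquad P_3 (-P_3) P_3 = -P_3,
\]
so $P_3 (-P) P_3 = (P_1, P_2, -P_3) = \overline{P}$. Hence $\overline{P} \cong -P$ as tuples.

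Both free spectrahedra and matrix ranges depend only on the unitary equivalence class of the defining tuple, so $\cD_{\overline{P}} = \cD_{-P}$ and $\cW(\overline{P}) = \cW(-P)$. From the definition of $L_A$ it is immediate that replacing $A$ by $-A$ reflects the free spectrahedron through the origin, i.e.\ $\cD_{-P} = -\cD_P$; similarly, since the UCP image $\phi(-P) = -\phi(P)$, we have $\cW(-P) = -\cW(P)$. Combining these equalities yields the proposition.

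There is no real obstacle here, as everything reduces to the anticommutation relations of the Pauli matrices. The only small point requiring care is to verify once explicitly that matrix ranges and free spectrahedra are invariant under unitary equivalence of the tuple (for the matrix range this is because a unitary conjugation is itself a UCP map with UCP inverse; for the free spectrahedron, conjugating the pencil $L_A$ by $U \otimes I$ preserves positivity). With that in hand, the proof is a two-line calculation.
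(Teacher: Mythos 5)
Your proof is correct and rests on essentially the same idea as the paper's: conjugation by $P_3$, exploiting the anticommutation relations, to relate $\overline{P}$ to $-P$. The paper performs this by conjugating the pencil defining $\cD_{\overline{P}}$ by $P_3 \otimes I$, whereas you package the identical computation as the unitary equivalence $\overline{P} = P_3(-P)P_3$ and then invoke the (standard, correctly justified) invariance of $\cD_A$ and $\cW(A)$ under unitary equivalence of the coefficient tuple together with $\cD_{-A} = -\cD_A$ and $\cW(-A) = -\cW(A)$.
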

\begin{proof}
It suffices to prove the second claim. The second claim follows from applying a conjugation by $P_3 \otimes I$ to the definition of $\cD_{\overline{P}}$, where we note that $P_1 = \overline{P_1}$, $P_2 = \overline{P_2}$, $P_3 = -\overline{P_3}$, and $P_3 \otimes I$ anticommutes with $P_1 \otimes I$ and $P_2 \otimes I$.
\end{proof}

The above proposition again emphasizes the fact that a free spectrahedron or matrix range of anticommuting self-adjoint unitaries, such as $\cD_P$, need not enjoy very much symmetry. As such, the abundant symmetry  of $\cD_F$ stems from the \textit{universality} of the coefficients used. Another perspective is that if $A \in SM^g$ consists of anticommuting self-adjoint unitaries, then conjugations by $A_i \otimes I$ will demonstrate that $\cD_A$ has symmetry under negating \textit{all but one} of the generators, but symmetry under negating any \textit{one} of the generators is not even a given. On the other hand, consider any $g \times g$ orthogonal (real unitary) matrix $U$ and the tuple determined by multiplication $UX$, where $X = (X_1, \ldots, X_g)$ is viewed as a column vector. That is, consider
\begin{equation}\label{eq:ortho_example} U X = \left( \sum\limits_{k=1}^g u_{1k} X_k, \, \sum\limits_{k=1}^g u_{2k} X_k \, , \ldots, \, \sum\limits_{k=1}^g u_{gk} X_k \right),
\end{equation}
the orthogonal transformation of $X$ determined by $U$. 

\begin{proposition}\label{prop:symmetry}
Fix $g \geq 2$. Then for any $g \times g$ orthogonal $U$, $\cD_{UF} = \cD_F$ and $\cW(UF) = \cW(F)$.  
\end{proposition}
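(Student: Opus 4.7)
The plan is to observe that $UF$ remains a tuple of pairwise anticommuting self-adjoint unitaries and then invoke the universality of $F$ to promote this into equalities at the level of $\cD_F$ and $\cW(F)$. First, I would verify by direct computation that each $(UF)_i = \sum_{k=1}^g u_{ik} F_k$ is self-adjoint (since $U$ is real) and that $(UF)_i^2 = I$: the cross terms $F_k F_\ell + F_\ell F_k$ vanish for $k \neq \ell$, leaving $\sum_k u_{ik}^2 \, I = I$ because the rows of $U$ are unit vectors. The analogous calculation gives $(UF)_i (UF)_j + (UF)_j (UF)_i = 2(UU^T)_{ij} I = 0$ for $i \neq j$, so $UF$ is indeed a tuple of pairwise anticommuting self-adjoint unitaries.

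Next, I would use the universality of $F$ to produce a unital $*$-automorphism $\alpha$ of $C^*(F)$ sending each $F_i$ to $(UF)_i$. The universal property provides a unital $*$-homomorphism $\alpha : C^*(F) \to C^*(UF)$ with $\alpha(F_i) = (UF)_i$. Since the $(UF)_i$ are real-linear combinations of the $F_j$ and, by orthogonality of $U$, the $F_i$ are real-linear combinations of the $(UF)_j$, we have $C^*(UF) = C^*(F)$, so $\alpha$ is a unital $*$-endomorphism of $C^*(F)$. Applying the same construction to $U^T$ yields an endomorphism $\beta$, and a check on generators using $U^T U = I$ confirms $\alpha \circ \beta = \beta \circ \alpha = \mathrm{id}$, so $\alpha$ is an automorphism.

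Finally, I would derive the two equalities from $\alpha$. For the matrix range, any UCP map $\phi : C^*(F) \to M_n$ satisfies $\phi(UF) = U\phi(F)$, so $\cW(UF) = U\cW(F)$; the map $\phi \circ \alpha$ is UCP and yields $(\phi \circ \alpha)(F) = U\phi(F)$, giving $U\cW(F) \subseteq \cW(F)$, with the reverse inclusion via $\alpha^{-1}$. For the free spectrahedron, applying the completely positive map $\alpha \otimes \mathrm{id}_n$ to $L_F(X) = I - \sum_k F_k \otimes X_k$ produces $I - \sum_k F_k \otimes (U^T X)_k = L_F(U^T X)$, so $X \in \cD_F$ iff $U^T X \in \cD_F$, giving $\cD_F = U\cD_F$; combined with the direct computation $\cD_{UF} = U\cD_F$, this yields $\cD_{UF} = \cD_F$. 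The main subtlety is the careful invocation of universality: one must use that $F$ is a \emph{faithful} realization of the universal $C^*$-algebra (equivalently, that $C^*(F)$ has the maximal possible vector space dimension among $C^*$-algebras generated by $g$ pairwise anticommuting self-adjoint unitaries) so that $\alpha$ is a genuine automorphism of $C^*(F)$ rather than merely a quotient map.
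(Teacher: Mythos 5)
Your proposal is correct and follows essentially the same route as the paper: verify via orthogonality that $UF$ again consists of pairwise anticommuting self-adjoint unitaries, invoke universality to obtain an invertible unital $*$-homomorphism $F \mapsto UF$, and deduce $\cW(UF) = \cW(F)$ from the resulting correspondence of UCP images. The only cosmetic difference is that you derive $\cD_{UF} = \cD_F$ directly by applying $\alpha \otimes \mathrm{id}$ to the pencil, whereas the paper obtains it from $\cW(UF)=\cW(F)$ by free polar duality; both are fine.
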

\begin{proof}
Let $U$ be an orthogonal matrix. Because $U$ has real entries, the tuple $UF$ consists of self-adjoints. The remaining relations, given by $F_i^2 = I$ and $F_i F_j = -F_j F_i$, are seen by direct computation using orthogonality: 
\[ \left( \sum\limits_{k=1}^g u_{ik} F_k \right) \left( \sum\limits_{m=1}^g u_{jm} F_m \right) \,\, = \,\, \sum\limits_{k=1}^g u_{ik} u_{jk} I \,\, + \,\, \sum\limits_{k \not= m} u_{ik} u_{jm} F_k F_m \]
\[ \,\, = \,\, \delta_{ij} I + \sum\limits_{k < m} (u_{ik}u_{jm} - u_{im}u_{jk}) F_k F_m.\]
If $i = j$, then only the delta term appears, with a result of $I$. If $i \not= j$, then only the sum appears, and since it is antisymmetric in $i$ and $j$, this demonstrates the anticommutation relation. This means that there is a unital $*$-homomorphism that maps $F \mapsto UF$, which is clearly invertible. Hence, the examination of UCP images shows that $\cW(F) = \cW(UF)$, which implies that $\cD_{F} = \cD_{UF}$.
\end{proof}

We can also apply the transformation to elements themselves, rather than the coefficient tuple.

\begin{corollary}\label{cor:symmetry2}
Fix $g \geq 2$. Then $\cD_F$ is closed under orthogonal transformations: if $X \in \cD_F$, then $UX \in \cD_F$. As a special case, if $(X_1, \ldots, X_g) \in \cD_F$, it follows that for any choice of $\pm$ signs, $(\pm X_1, \pm X_2, \ldots, \pm X_g) \in \cD_F$.
\end{corollary}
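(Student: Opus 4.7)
The plan is to deduce this directly from Proposition \ref{prop:symmetry}, with only an index-bookkeeping step in between. Given $X = (X_1, \ldots, X_g) \in \cD_F$ and an orthogonal matrix $U = (u_{ij})$, I want to verify the pencil inequality $I - \sum_{i=1}^g F_i \otimes (UX)_i \succeq 0$. Writing $(UX)_i = \sum_{k=1}^g u_{ik} X_k$ and swapping the order of summation yields
\[
\sum_{i=1}^g F_i \otimes (UX)_i \;=\; \sum_{k=1}^g \left( \sum_{i=1}^g u_{ik} F_i \right) \otimes X_k \;=\; \sum_{k=1}^g (U^T F)_k \otimes X_k,
\]
so that $UX \in \cD_F$ if and only if $X \in \cD_{U^T F}$.

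Since $U$ orthogonal implies $U^T$ orthogonal, Proposition \ref{prop:symmetry} applied to $U^T$ gives $\cD_{U^T F} = \cD_F$. Hence $X \in \cD_F$ forces $UX \in \cD_F$, proving the first assertion. For the special case, it suffices to observe that for any choice of signs $\varepsilon_1, \ldots, \varepsilon_g \in \{+1, -1\}$, the diagonal matrix $\diag(\varepsilon_1, \ldots, \varepsilon_g)$ is an orthogonal matrix, so applying the first part with this $U$ yields $(\varepsilon_1 X_1, \ldots, \varepsilon_g X_g) \in \cD_F$.

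There is essentially no obstacle here; the content of the corollary is really an unpacking of Proposition \ref{prop:symmetry}, trading the symmetry on the coefficient side for the corresponding symmetry on the variable side via the transpose. The only subtlety worth flagging is the appearance of $U^T$ rather than $U$ in the intermediate step, which is harmless because the orthogonal group is closed under transposition.
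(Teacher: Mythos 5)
Your argument is correct and is essentially the paper's own proof: you regroup the pencil by the $X_k$ to see that $UX \in \cD_F$ if and only if $X \in \cD_{U^T F}$, invoke Proposition \ref{prop:symmetry} (noting $U^T$ is again orthogonal), and handle the sign statement with diagonal $\pm 1$ orthogonal matrices. Nothing to add.
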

\begin{proof}
Grouping terms in a linear matrix inequality
\[ F_1 \otimes \left( \sum\limits_{k=1}^g u_{1k} X_k \right) \,\, + \,\, F_2 \otimes \left( \sum\limits_{k=1}^g u_{2k} X_k \right) \,\, + \,\, \ldots \,\, + \,\, F_g \otimes \left( \sum\limits_{k=1}^g u_{gk} X_k \right) \,\, \preceq \,\, I\] 
by $X_i$ instead of by $F_i$ shows that $UX \in D_F$ if and only if $X \in \cD_{U^T F}$. So, the desired symmetry reduces to the previous result. The claim about negating entries of $X$ follows by considering $U$ diagonal with $\pm 1$ entries.
\end{proof}

These symmetries make it very easy to extend results about $\cD_{F^{[g]}}$ to $\cD_{F^{[g+1]}}$.

\begin{corollary}\label{cor:extending_by_zero}
Fix $g < h$. Then $\cP_g \cD_{F^{[h]}} = \cD_{F^{[g]}}$, and in fact $X \in \cD_{F^{[g]}}$ if and only if $(X, 0) \in \cD_{F^{[h]}}$.
\end{corollary}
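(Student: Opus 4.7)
The plan is to establish the ``in fact'' claim first and then derive the coordinate projection equality as a consequence using the symmetry of Corollary \ref{cor:symmetry2}.

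For the ``in fact'' claim I will induct on $h - g \geq 1$. In the base case $h = g + 1$, unfolding the recursive construction gives
\[
F^{[g+1]} \,=\, (F_1^{[g]} \otimes P_1, \, F_2^{[g]} \otimes P_1, \, \ldots, \, F_g^{[g]} \otimes P_1, \, I \otimes P_2),
\]
so setting the last coordinate to zero eliminates the corresponding summand of the pencil, leaving $I - \sum_{i=1}^g (F_i^{[g]} \otimes P_1) \otimes X_i$. Since $P_1 = \mathrm{diag}(1, -1)$, after reordering tensor factors this pencil is block diagonal with diagonal blocks $I - \sum_i F_i^{[g]} \otimes X_i$ and $I + \sum_i F_i^{[g]} \otimes X_i$. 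Consequently $(X, 0) \in \cD_{F^{[g+1]}}$ is equivalent to $X \in \cD_{F^{[g]}} \cap (-\cD_{F^{[g]}})$, and because $\cD_{F^{[g]}}$ is closed under negation by the $U = -I$ case of Corollary \ref{cor:symmetry2}, this intersection equals $\cD_{F^{[g]}}$. The inductive step $h > g+1$ applies the base-case identification to the last coordinate, identifying $(X, 0, \ldots, 0) \in \cD_{F^{[h]}}$ with $(X, 0, \ldots, 0) \in \cD_{F^{[h-1]}}$, and then invokes the inductive hypothesis.

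Given the ``in fact'' claim, the inclusion $\cD_{F^{[g]}} \subseteq \cP_g \cD_{F^{[h]}}$ is immediate by extending with zero coordinates. For the reverse inclusion, suppose $(X, Y) \in \cD_{F^{[h]}}$ with $Y = (Y_1, \ldots, Y_{h-g})$. For each sign vector $\epsilon \in \{\pm 1\}^{h-g}$, Corollary \ref{cor:symmetry2} applied to the diagonal $\pm 1$ orthogonal matrix that fixes the first $g$ coordinates and has $\epsilon_k$ on the $(g+k)$-th coordinate produces $(X, \epsilon_1 Y_1, \ldots, \epsilon_{h-g} Y_{h-g}) \in \cD_{F^{[h]}}$. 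Averaging these $2^{h-g}$ points using ordinary convexity at each matrix level yields $(X, 0) \in \cD_{F^{[h]}}$, and the ``in fact'' claim then delivers $X \in \cD_{F^{[g]}}$. The only conceptually nontrivial ingredient in the whole argument is the base-case observation that $F^{[g]} \otimes P_1$ is unitarily equivalent to $F^{[g]} \oplus (-F^{[g]})$; once this decomposition is in hand, negation symmetry makes the two summands impose identical constraints and the rest is routine bookkeeping.
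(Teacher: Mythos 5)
Your proposal is correct and follows essentially the same route as the paper: the key step in both is that setting the last coordinate to zero block-diagonalizes the pencil via $F^{[g]} \otimes \mathrm{diag}(1,-1) \cong F^{[g]} \oplus (-F^{[g]})$, so that $(X,0) \in \cD_{F^{[g+1]}}$ forces both $\pm X \in \cD_{F^{[g]}}$, which collapses to $X \in \cD_{F^{[g]}}$ by the negation symmetry of Corollary \ref{cor:symmetry2}. The only cosmetic differences are that the paper reduces immediately to $h = g+1$ and averages the single pair $(X,\pm Y)$, whereas you run an explicit induction and average over all $2^{h-g}$ sign patterns; both are valid.
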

\begin{proof}
It suffices to consider $h = g + 1$. Fix $X \in \mathcal{P}_g \cD_{F^{[h]}}$ and a witness $(X, Y) \in \cD_{F^{[h]}}$. Since $(X, -Y)$ also belongs to $\cD_{F^{[h]}}$, averaging shows that $(X, 0) \in \cD_{F^{[h]}}$, and hence $(X, 0)$ may always be used as the witness. On the other hand, the construction of $F^{[h]}$ from $F^{[g]}$ uses the unitary tuple
\[ \left( \begin{bmatrix} F_1 & 0 \\ 0 & -F_1 \end{bmatrix}, \ldots,  \begin{bmatrix} F_g & 0 \\ 0 & -F_g \end{bmatrix}, \begin{bmatrix} 0 & I \\ I & 0 \end{bmatrix} \right), \]
so claiming $(X, 0) \in \cD_{F^{[h]}}$ is equivalent to claiming
\[ \begin{bmatrix} \sum_{i=1}^g F_i \otimes X_i & 0 \\ 0 & - \sum_{i=1}^g F_i \otimes X_i \end{bmatrix} \, \preceq \, I. \]
That is, both $\pm X \in \cD_{F^{[g]}}$, and by the symmetry of $\cD_{F^{[g]}}$ this is equivalent to the claim $X \in \cD_{F^{[g]}}$. All of these steps were equivalences, so $X \in \cD_{F^{[g]}}$ if and only if $(X, 0) \in \cD_{F^{[h]}}$, and consequently $\cP_g \cD_{F^{[h]}} = \cD_{F^{[g]}}$.
\end{proof}

However, the projections of $\cD_P$ and $\cD_{\overline{P}}$ behave very differently, and we will need these claims in the next section.

\begin{proposition}
\label{prop:max_proj_pauli}
The maximal matrix convex set over the closed disk is the projection of $\cD_P$ to the first two coordinates. That is, $\cP_2 \cD_P = \cW^{\operatorname{max}}(\overline{\mathbb{D}})$.  Similarly, $\cP_2 \cD_{\overline{P}} = \cW^{\operatorname{max}}(\overline{\mathbb{D}})$.
\end{proposition}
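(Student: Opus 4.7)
The plan is to combine the self-duality established in Theorem \ref{thm:Paulidual} with the elementary observation that coordinate projection commutes with matrix convex hull formation.

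First I would invoke Theorem \ref{thm:Paulidual}, which gives $\cD_P = \cD_P^\circ = \cW(P) = \mconv(P_1, P_2, P_3)$, where the last identification uses that $\cW(A) = \mconv(A)$ for a matrix tuple $A$. An analogous statement holds for $\overline{P}$. This reduces the task to computing the projection of $\mconv(P_1, P_2, P_3)$ and of $\mconv(\overline{P}_1, \overline{P}_2, \overline{P}_3)$ to the first two coordinates.

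Next I would verify that projection commutes with the matrix convex hull. If $S \subseteq SM^{g+h}$, then $\cP_g \mconv(S) = \mconv(\cP_g S)$: the inclusion $\supseteq$ is obtained by lifting matrix convex combinations of projected points via choices of witnesses in $S$ and then projecting, and the inclusion $\subseteq$ follows because projecting a matrix convex combination $\sum V_i^* X^{(i)} V_i$ with $X^{(i)} \in S$ yields the matrix convex combination $\sum V_i^* (\cP_g X^{(i)}) V_i$ of elements of $\cP_g S$. Applying this with $g=2$ gives $\cP_2 \cD_P = \mconv(P_1, P_2)$ and $\cP_2 \cD_{\overline{P}} = \mconv(\overline{P}_1, \overline{P}_2)$.

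Finally, I would identify these projections. Since $\overline{P}_1 = P_1$ and $\overline{P}_2 = P_2$, both tuples $(P_1, P_2)$ and $(\overline{P}_1, \overline{P}_2)$ are literally equal to the universal pair $F^{[2]}$ of anticommuting self-adjoint unitaries. The excerpt already noted, via \cite[Proposition 14.14]{HKMS19}, that $\cW(F^{[2]}) = \mconv(F^{[2]})$ equals $\cW^{\max}(\overline{\mathbb{D}})$. Combining these gives $\cP_2 \cD_P = \cP_2 \cD_{\overline{P}} = \cW^{\max}(\overline{\mathbb{D}})$, as required. There is no substantive obstacle here beyond bookkeeping; the only step that requires any argument at all is the commutation of $\cP_g$ with $\mconv$, and that is a one-line unwrapping of definitions.
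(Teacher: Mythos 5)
Your proof is correct, and it leans on the same two external inputs as the paper's: the identification $\cD_P = \cD_P^\circ = \cW(P) = \mconv(P)$ from Theorem \ref{thm:Paulidual}, and the fact that $\cW(F^{[2]}) = \cW^{\mathrm{max}}(\overline{\mathbb{D}})$ from \cite[Proposition 14.14]{HKMS19}. The only real divergence is in how the inclusion $\cP_2 \cD_P \subseteq \cW^{\mathrm{max}}(\overline{\mathbb{D}})$ is obtained. You compute the projection exactly via the algebraic identity $\cP_g \mconv(S) = \mconv(\cP_g S)$ (valid here with no closure issues, since the paper's $\mconv$ is the unclosed hull and the identity is a finite bookkeeping of matrix convex combinations and witnesses), landing directly on $\mconv(P_1,P_2) = \mconv(F^{[2]})$. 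The paper instead bounds only the first level, using the $C^*$-norm identity to get $\cW_1(P) \subseteq \overline{\mathbb{B}_3}$, hence $\cP_2\cW_1(P) \subseteq \overline{\mathbb{D}}$, and then invokes the defining maximality of $\cW^{\mathrm{max}}(\overline{\mathbb{D}})$; its reverse inclusion $\cW(P_1,P_2) \subseteq \cP_2\cW(P)$ is exactly the easy half of your commutation identity. Your route gives marginally more information (an exact description of the projection as a matrix convex hull, without appealing to the maximality characterization), while the paper's norm argument avoids having to state the commutation lemma; both are equally short, and either is acceptable.
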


\begin{proof}
Because $P_1$, $P_2$, and $P_3$ are anticommuting self-adjoint unitaries, the $C^*$-norm identity shows that $\cW_1(P)$ is contained in the closed unit ball. So, the coordinate projection has $\cP_2 \cW_1(P) \subseteq \overline{\mathbb{D}}$, which implies $\cP_2 \cW(P) \subseteq \cW^{\text{max}}(\overline{\mathbb{D}})$. On the other hand, 
\[ \cW^{\text{max}}(\overline{\mathbb{D}}) \,\, = \,\, \cW\left( F^{[2]} \right) \,\, = \,\, \cW(P_1, P_2) \,\, \subseteq \,\, \cP_2 \cW(P), \]
so the reverse containment also holds. A similar argument applies to $\overline{P}$.
\end{proof}

\begin{theorem}
\label{thm:RealSpinBallFreeExtreme}
Let $n = 4$ or $n = 6$. Then $(\free \cD_{F^{[3]}})(n)$ is nonempty. As an immediate consequence, $\cD_{F^{[3]}}$ is not a minimal matrix convex set, and its free polar dual $\cW(F^{[3]})$ is not a maximal matrix convex set. 

More precisely, for $n=4$ or $n=6$ there exists a symmetric matrix tuple $C=(C_1,C_2,C_3) \in M_{n/2} (\CC)$ such that $X$ defined by
\begin{equation}
\label{eq:FreeExForm}
X = 
    \left( 
        \begin{bmatrix} 0 & C_1 \\
        \overline{C}_1 & 0 \end{bmatrix},
        \begin{bmatrix} 0 & C_2 \\
        \overline{C}_2 & 0 \end{bmatrix},
        \begin{bmatrix} 0 & C_2 \\
        \overline{C}_2 & 0 \end{bmatrix}
    \right) 
    \in \cD_{F^{[3]}}(n) 
\end{equation}
is a free extreme point at level $n$ of $\cD_{F^{[3]}}$.
\end{theorem}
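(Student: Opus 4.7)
The plan is to exhibit explicit symmetric matrix tuples $C = (C_1, C_2, C_3)$ of sizes $2 \times 2$ (for $n = 4$) and $3 \times 3$ (for $n = 6$), and then verify the three requirements for the corresponding block off-diagonal tuple $X$ defined by (\ref{eq:FreeExForm}) to be a free extreme point of $\cD_{F^{[3]}}$: membership, irreducibility, and the absence of nontrivial dilations.

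The first simplification stems from the block off-diagonal form of $X$ combined with symmetry of each $C_i$: the anti-linear operator on $\mathbb{C}^2 \otimes \mathbb{C}^n$ given by complex conjugation on the $\mathbb{C}^2$ factor tensored with the composition of the $n/2$-block swap and entrywise conjugation on $\mathbb{C}^n$ intertwines $L_P(X)$ and $L_{\overline{P}}(X)$. Since $\cD_{F^{[3]}} = \cD_P \cap \cD_{\overline{P}}$, this guarantees that $L_P(X) \succeq 0$ automatically implies $L_{\overline{P}}(X) \succeq 0$, so membership reduces to a single pencil check, which is a direct eigenvalue calculation on a hermitian matrix of size $4$ or $6$. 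For irreducibility, take $C_1$ to be diagonal with distinct real eigenvalues; this constrains any matrix commuting with $X_1$ to a block diagonal form that preserves each of the two halves of $\mathbb{C}^n$. Then commutation with $X_2$ and $X_3$ (whose off-diagonal blocks are coupled via symmetric $C_2, C_3$) should force any commutant matrix to be a scalar.

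The main obstacle is verifying Arveson extremality, which combined with irreducibility promotes $X$ to a free extreme point. By the kernel-based dilation criterion for free spectrahedra in \cite{EHKM}, $X$ is Arveson extreme if and only if every one-dimensional dilation of $X$ staying in $\cD_{F^{[3]}}$ is trivial, which translates into a linear algebra condition on the kernel of $L_{F^{[3]}}(X) = L_P(X) \oplus L_{\overline{P}}(X)$. Since this kernel decomposes as $\ker L_P(X) \oplus \ker L_{\overline{P}}(X)$, with the two summands conjugate via the intertwiner described above, one obtains a simultaneous system of linear constraints on prospective dilation tuples $\beta = (\beta_1, \beta_2, \beta_3)$ of column vectors in $\mathbb{C}^n$. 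The candidate $C$ must be chosen so that $\ker L_P(X)$ is large enough and structured enough for these combined constraints to eliminate all nonzero $\beta$. Identifying concrete symmetric matrices simultaneously meeting all conditions (membership with a prescribed kernel dimension, irreducibility, and the dilation test) is the delicate part and will likely require a parametric search, possibly aided by computer algebra; once the tuples are in hand, every verification reduces to a mechanical finite-dimensional computation.

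The final step is to observe the immediate consequences: since minimal matrix convex sets over Euclidean space admit free extreme points only at matrix level $1$, the existence of a free extreme point of $\cD_{F^{[3]}}$ at level $n \geq 4$ forces $\cD_{F^{[3]}}$ to differ from $\cW^{\mathrm{min}}(\overline{\mathbb{B}_3})$, and dually $\cW(F^{[3]})$ cannot be maximal over $\overline{\mathbb{B}_3}$, resolving Question \ref{ques:themain} in the negative.
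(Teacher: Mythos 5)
Your strategy coincides with the paper's: reduce free extremality to irreducibility plus the kernel-based dilation test of \cite{EHKM} and \cite{EH} (i.e.\ that the only $\beta \in M_{n\times 1}(\CC)^3$ with $\Lambda_{F^{[3]}}(\beta)P_{F^{[3]},X}=0$ is $\beta=0$, where the columns of $P_{F^{[3]},X}$ span $\ker L_{F^{[3]}}(X)$), and then verify these finite-dimensional conditions on explicit examples by exact computation. Your auxiliary observations are sound: since each $C_i$ is symmetric, conjugating the entrywise complex conjugate of $L_{\overline{P}}(X)$ by $I\otimes S$ (with $S$ the block swap) recovers $L_P(X)$, so membership in $\cD_{F^{[3]}}=\cD_P\cap\cD_{\overline{P}}$ does reduce to a single pencil, and the kernel of $L_{F^{[3]}}(X)$ does split as $\ker L_P(X)\oplus\ker L_{\overline{P}}(X)$.

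The gap is that you never produce the tuples $C$. The theorem is an existence statement, and its entire proof content is the exhibited witnesses: the paper writes down explicit symmetric $2\times 2$ and $3\times 3$ tuples (equations \eqref{eq:FreeExForm4} and \eqref{eq:FreeExForm6}, the latter built from $\alpha=\sqrt{2}-1$) and certifies the two conditions in exact arithmetic with the NCSpectrahedronExtreme package. Your proposal explicitly defers this step (\lq\lq will likely require a parametric search\rq\rq), so as written nothing is proved; indeed, whether such points exist at all is precisely the open question (Question \ref{ques:themain}) being resolved, so the existence of a successful $C$ cannot be taken for granted. A secondary caution: your suggestion to take $C_1$ diagonal with distinct eigenvalues only forces a commutant element to preserve the two coordinate subspaces of $\CC^{n/2}$ inside each half of $\CC^n$; the full irreducibility argument over $\CC$ still has to be carried out against the actual $C_2$ and $C_3$, which again requires the concrete matrices. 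Once explicit tuples are supplied and the two linear systems are checked, your argument matches the paper's.
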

\begin{proof}
    As a minimal matrix convex set is the matrix convex hull of its first level, all free extreme points of a minimal matrix convex set must be at level $1$. Therefore, if $\cD_{F^{[3]}}$ has free extreme points at levels other than $1$, it is immediate that $\cD_{F^{[3]}}$ is not minimal. Furthermore, a bounded matrix convex set containing $0$ is minimal if and only if its free polar dual is maximal, so it will follow that the free polar dual of $\cD_{F^{[3]}}$ is not maximal. 

    Now, in the case that $n=4$, let 
    \begin{equation}
    \label{eq:FreeExForm4}
    C = \frac{1}{2}\left(\left[
\begin{array}{cc}
 1+\frac{1}{\sqrt{3}} & 0 \\
 0 & -1+\sqrt{3} \\
\end{array}
\right],\left[
\begin{array}{cc}
 0 & 1 \\
 1 & 0 \\
\end{array}
\right],\left[
\begin{array}{cc}
 -i+\frac{2 i}{\sqrt{3}} & 0 \\
 0 & -i \\
\end{array}
\right]\right).
    \end{equation}
Then a direct computation using exact arithmetic verifies that the self-adjoint tuple $X$ defined in equation \eqref{eq:FreeExForm} is irreducible over $\CC$ and is a free extreme point at level four of $\cD_{F^{[3]}}$. More precisely, \cite[Theorem 1.1 (3)]{EHKM} and \cite[Lemma 2.1 (3)]{EH} together show that in order to claim that $X$ is free extreme, one need only verify that $X$ is irreducible and that the only $\beta \in M_{n \times 1} (\CC)^3$ that satisfies
\[
\Lambda_{F^{[3]}} (\beta) P_{F^{[3]},X} = 0
\]
is $\beta = 0$.  Here $P_{F^{[3]},X}$ is a matrix whose columns form a basis for the null space of $L_{F^{[3]}} (X)$. It is well-known that a tuple of self-adjoint matrices $X = (X_1,\dots,X_g) \in SM_{n} (\C)^g$ is irreducible if and only if the only matrices that commute with each $X_i$ are multiples of the identity. Thus each condition can be verified by solving a linear system of equations.

In the case that $n=6$, set $\alpha = \sqrt{2}-1$ and let
       \begin{equation}
    \label{eq:FreeExForm6}
   \hspace*{-.1 in} C = \frac{1}{4}\left(\left[
\begin{array}{ccc}
 \alpha +1 & 0 & -\alpha  \\
 0 & \alpha +1 & -\alpha  \\
 -\alpha  & -\alpha  & \alpha +1 \\
\end{array}
\right],\left[\begin{array}{ccc}
 -4 \sqrt{2\alpha } & 0 & 0 \\
 0 & 4  \sqrt{2\alpha } & 0 \\
 0 & 0 & 0 \\
\end{array}
\right],i\left[
\begin{array}{ccc}
 0 & 3 \alpha -1 & \alpha  \\
 3 \alpha -1 & 0 & \alpha  \\
 \alpha  & \alpha  & 3-\alpha  \\
\end{array}
\right]\right).
    \end{equation}
    Then $X$ defined as in equation \eqref{eq:FreeExForm} is a free extreme point at level six of $\cD_{F^{[3]}}$.
\end{proof}

We perform the computations appearing in the above proof in exact arithmetic using the NCSpectrahedronExtreme paclet \cite{EEdO+} for Mathematica. A publicly available Mathematica notebook containing the computations with detailed documentation for the steps is available at \href{https://github.com/NCAlgebra/UserNCNotebooks}{https://github.com/NCAlgebra/UserNCNotebooks}. Installation instructions for NCSpectrahedronExtreme are found in the NCSpectrahedronExtreme folder of the UserNCNotebooks directory. 

\begin{remark}
    The free extreme points constructed in Theorem \ref{thm:RealSpinBallFreeExtreme} are unitarily equivalent to real matrix tuples, thus there exist real matrix tuples in $\cD_{F^{[3]}}$ at levels 4 and 6 that are free extreme. For example, if one takes $C$ as in equation \eqref{eq:FreeExForm4} and $X$ as in equation \eqref{eq:FreeExForm} and defines 
    \[
    U=
\frac{\sqrt{2}}{2}\begin{bmatrix}
    I_2 & -i I_2 \\
    I_2 & i I_2
\end{bmatrix}
\qquad \mathrm{and} \qquad \alpha = 1+\frac{1}{\sqrt{3}},
    \]
    then $U$ is a unitary and 
    \[ \hspace*{-.15 in}
U^* X U = \frac{1}{2}\left(\left[
\begin{array}{cccc}
\alpha & 0 & 0 & 0 \\
 0 & 3\alpha-4 & 0 & 0 \\
 0 & 0 & -\alpha & 0 \\
 0 & 0 & 0 & -3\alpha+4 \\
\end{array}
\right],\left[
\begin{array}{cccc}
 0 & 1 & 0 & 0 \\
 1 & 0 & 0 & 0 \\
 0 & 0 & 0 & -1 \\
 0 & 0 & -1 & 0 \\
\end{array}
\right],
\left[
\begin{array}{cccc}
 0 & 0 & 3-2\alpha & 0 \\
 0 & 0 & 0 & 1 \\
 3-2\alpha& 0 & 0 & 0 \\
 0 & 1 & 0 & 0 \\
\end{array}
\right]\right).
    \]
    A similar choice of unitary works in the $n=6$ case.
\end{remark}

\begin{remark}
We prove a similar result for length $g > 3$ in Corollary \ref{cor:spinwrapup}, after some discussion about the extreme points of projections of matrix convex sets. Moreover, these results imply that the hypothetical posed directly after \cite[Theorem 5.12]{FarenickSpinOriginal}, about equality of the spin ball and the max ball for $g \geq 3$, has a negative answer. The notational conventions used in \cite{FarenickSpinOriginal} reverse the roles of the min and max monikers compared to our convention; see the text slightly before \cite[Proposition 14.6]{HKMS19}. 
\end{remark}

\begin{corollary}
    Modulo unitary equivalence, there are infinitely many free extreme points at levels $4$ and $6$ of $\cD_{F^{[3]}}$. 
\end{corollary}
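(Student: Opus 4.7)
The plan is to exploit the orthogonal symmetry of $\cD_{F^{[3]}}$ from Corollary~\ref{cor:symmetry2} to turn the single free extreme points at levels $4$ and $6$ in Theorem~\ref{thm:RealSpinBallFreeExtreme} into continuous families of free extreme points, and then to separate infinitely many unitary equivalence classes within those families via a continuous invariant.

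First I would verify that for any $U \in O(3)$ the coordinate transformation $X \mapsto UX$ sends free extreme points of $\cD_{F^{[3]}}$ to free extreme points. Irreducibility is preserved because the commutants of $(X_1,X_2,X_3)$ and of $(UX)_1,(UX)_2,(UX)_3$ coincide. For the non-dilatability condition, one observes that if $Z_i = \bigl[\begin{smallmatrix}(UX)_i & A_i \\ B_i & D_i\end{smallmatrix}\bigr]$ is a dilation of $UX$ lying in $\cD_{F^{[3]}}$, then applying $U^T$ coordinatewise yields a tuple $U^T Z$ in $\cD_{F^{[3]}}$ by Corollary~\ref{cor:symmetry2} that is a dilation of $X$, whose off-diagonal blocks are related to those of $Z$ by the invertible linear map $U^T$; hence triviality of $U^T Z$ as a dilation of $X$ is equivalent to triviality of $Z$ as a dilation of $UX$. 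Applied to the free extreme $X$ of Theorem~\ref{thm:RealSpinBallFreeExtreme} at matricial level $n \in \{4,6\}$, the orbit $\{UX : U \in O(3)\}$ consists of free extreme points at level $n$.

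The second step is to show this orbit meets infinitely many unitary conjugation classes. The Gram matrix $M_{kl} := \Tr(X_k X_l)$ transforms under $X \mapsto UX$ as $M \mapsto UMU^T$, so the diagonal of $UMU^T$, which records $(\Tr((UX)_i^2))_{i=1}^{3}$, is an invariant under simultaneous unitary conjugation of the tuple $UX$. By the Schur--Horn theorem, as $U$ ranges over $O(3)$ the diagonal of $UMU^T$ takes infinitely many values whenever $M$ is not a scalar multiple of the identity. For the explicit tuples in Theorem~\ref{thm:RealSpinBallFreeExtreme}, the block-off-diagonal form of each $X_i$ yields $\Tr(X_i^2) = 2\,\mathrm{Re}\,\Tr(C_i \overline{C_i}) = 2\|C_i\|_F^2$, the second equality using that the $C_i$ in \eqref{eq:FreeExForm4} and \eqref{eq:FreeExForm6} are complex symmetric; a brief calculation with the explicit entries shows that $\|C_1\|_F^2, \|C_2\|_F^2, \|C_3\|_F^2$ are not all equal in either case, so $M$ is non-scalar. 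The main obstacle is the first step, since the definition of a free extreme point is phrased via dilations rather than directly through the coefficient tuple; once the equivariance of the $O(3)$-action is in hand, the counting argument is routine.
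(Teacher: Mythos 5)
Your proposal is correct and follows essentially the same route as the paper: both use the orthogonal symmetry of Corollary \ref{cor:symmetry2} to show the $O(3)$-orbit of the explicit free extreme points of Theorem \ref{thm:RealSpinBallFreeExtreme} consists of free extreme points, and then distinguish infinitely many unitary equivalence classes via a continuous spectral invariant. The only cosmetic difference is your choice of invariant (the diagonal of the Gram matrix $\Tr(X_kX_l)$ together with Schur--Horn, rather than the paper's appeal to differing eigenvalues), and your write-up of the equivariance of the dilation condition is a welcome elaboration of a step the paper leaves to the reader.
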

\begin{proof}
Corollary \ref{cor:symmetry2} shows that $\cD_{F^{[3]}}$ is closed under orthogonal transformations. Orthogonal transformations respect the structure of direct sums, so free extreme points (that is, irreducible elements whose only dilations are direct sums) are also preserved under orthogonal transformations. It is not hard to see that orthogonal transformations of the listed extreme points produce matrices with differing eigenvalues, hence they cannot be unitarily equivalent. 
\end{proof}

\subsection{An open question about operator system properties}

Kavruk showed in the discussion directly before \cite[Corollary 10.12]{Kav14} that the $5$-dimensional operator system
\[ \cT_2 \,\, = \,\, \text{span}_{\mathbb{C}}(I_4, E_{12}, E_{21}, E_{34}, E_{43}) \,\, \subseteq \,\, M_4 \]
does not have the lifting property. That is, a UCP map from $\cT_2$ into a quotient $C^*$-algebra $\mathcal{A}/\mathcal{I}$ need not lift to a UCP map from $\cT_2$ into $\mathcal{A}$. This was established using the operator system dual: the lifting property for finite-dimensional operator systems is dual to a property called $1$-exactness (or exactness, depending on the authors) by \cite[Theorem 6.6]{Kav14}, and a $1$-exact operator system spanned by unitaries must generate an exact $C^*$-algebra by \cite[Corollary 9.6]{KPTT13}. For the case of $\cT_2$, \cite[Theorem 10.11]{Kav14} shows that the operator system dual of $\cT_2$ is isomorphic to the operator system $\cS_U$ spanned by the generating unitaries $U_1, U_2$ in the full group $C^*$-algebra $C^*(\mathbb{F}_2)$, which is a non-exact $C^*$-algebra. Hence, $\cS_U$ fails to be $1$-exact as in \cite[Corollary 10.13]{Kav14}, and its dual $\cT_2$ fails to have the lifting property. The same technique applies essentially verbatim to the free product $\mathbb{Z}/2 * \mathbb{Z}/2 * \mathbb{Z}/2$ in place of the free group, giving a $4$-dimensional operator system that fails to have the lifting property.

The fact that $\cT_2$ is a subsystem of the $4 \times 4$ matrices, specifically within $M_2 \oplus M_2$, yet fails the lifting property is particularly notable.  Choi and Effros showed that every separable nuclear $C^*$-algebra has the lifting property as part of a more general result \cite[Theorem 3.10]{ChoiEffros76}. Thus, Kavruk's example shows that, even for a \textit{finite-dimensional} operator system spanned by unitary \textit{matrices}, knowing the lifting property of the $C^*$-envelope is not sufficient to establish the lifting property of the operator system.

The lifting property and 1-exactness are tied to approximation properties within matrix convex sets, as studied in \cite{PP21}. First, operator systems with the lifting property correspond to matrix convex sets that are approximated either by free spectrahedra or by matrix convex sets that are maximal above a finite level by \cite[Theorem 3.3 and Corollary 3.4]{PP21}. Second, $1$-exact operator systems correspond to matrix convex sets that are approximated either by matrix ranges of matrix tuples or by matrix convex sets generated by a finite level by \cite[Theorem 3.7 and Corollary 3.9]{PP21}. These ideas also relate to measurements of completely bounded norms of $k$-positive maps, as in \cite[\S 6]{ADMPR}. Computing the dilation scales, Hausdorff distances, and completely bounded norms mentioned in all of these results is generally quite difficult, and there are not very many key examples from which to draw.

\begin{question}
Does the operator system spanned by $F_1, F_2, F_3$ have the lifting property? Equivalently, is the operator system associated to the matrix convex set $\cD_{F^{[3]}}$ $1$-exact? 
\end{question}

By the results of \cite{PP21}, if a negative answer to this open question is reached, this would lead to a much stronger claim than Theorem \ref{thm:RealSpinBallFreeExtreme}. Not only would $\cD_{F^{[3]}}$ fail to be a minimal matrix convex set over its first level, but it would fail to be generated by any one fixed level, or approximated in a uniform Hausdorff distance by such sets as the level grows. Note that the $C^*$-algebra generated by $F^{[3]}$ is also $M_2 \oplus M_2$, similar to Kavruk's example.

Another option is that the structure of the minimal and maximal convex sets themselves may pose an obstruction.

\begin{question}
Is the operator system associated to $\mathcal{W}^{\text{max}}(\overline{\mathbb{B}}_3)$ 1-exact? Equivalently, does the operator system associated to $\mathcal{W}^{\text{min}}(\overline{\mathbb{B}}_3)$ have the lifting property?
\end{question}

A negative answer to this open question would have a corollary that $\mathcal{W}^{\text{min}}(\overline{\mathbb{B}}_3)$ cannot be a free spectrahedron, or equivalently that $\mathcal{W}^{\text{max}}(\overline{\mathbb{B}}_3)$ cannot be expressed as the matrix range of a tuple of matrices.

\subsection{Matrix convex sets over the ball}

There are multiple known matrix convex sets over the Euclidean ball. For the self-adjoint case, a notable example is the matrix ball
\[ \mathcal{B}_g \,\, := \,\, \{ X \in SM^g: \sum X_j^2 \preceq I\}. \]
The matrix ball is discussed in \cite[\S 9]{DDOSS17}, without a classification of its free extreme points. For $g = 2$, the matrix ball $\mathcal{B}_2$ is considered in \cite[\S 7.2.1]{EHKM}, where it is called the \lq\lq wild disk,\rq\rq\hspace{0pt} and its free extreme points are classified. Here we provide a straightforward generalization of the classification of free extreme points to $g \geq 3$, with techniques that are also reminiscent of \cite[Theorem 2.8]{Pas22}.

\begin{proposition}
Let $X \in \mathcal{B}_g$, and define $S = \sum\limits_{j=1}^g X_j^2$, $V = \ker(I - S)$, and $W = V^\perp$. If $S = I$, then $X$ is an Arveson extreme point of $\mathcal{B}_g$. If $S \not= I$, then $X$ is an Arveson extreme point of $\mathcal{B}_g$ precisely when the operators
\[ R_j \,\, := \,\, P_V \, X_j|_W \]
are injective with linearly independent ranges. That is, it is impossible to find vectors $w_j \in W$, not all zero, with $\sum\limits_{j=1}^g X_j w_j \in W$.
\end{proposition}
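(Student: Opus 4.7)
The plan is to analyze an arbitrary dilation
\[
Y_j = \begin{bmatrix} X_j & A_j \\ A_j^* & B_j \end{bmatrix}
\]
with $Y \in \mathcal{B}_g$ by expanding
\[
I - \sum_j Y_j^2 \, = \, \begin{bmatrix} (I-S) - \sum A_j A_j^* & -\sum (X_j A_j + A_j B_j) \\ -\sum (A_j^* X_j + B_j A_j^*) & I - \sum A_j^* A_j - \sum B_j^2 \end{bmatrix} \succeq 0
\]
and exploiting block positivity. The case $S = I$ is immediate: the $(1,1)$ block reduces to $-\sum A_j A_j^* \succeq 0$, forcing each $A_j = 0$, so every dilation is trivial and $X$ is Arveson extreme.

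Assume now $S \neq I$. I would first compress the $(1,1)$-block positivity to $V$ to obtain $\sum (P_V A_j)(P_V A_j)^* \preceq 0$, hence $\text{range}(A_j) \subseteq W$ for every $j$; in particular $A_j^*|_V = 0$. Next, using the standard fact that a positive operator matrix $\begin{bmatrix} P & Q \\ Q^* & R \end{bmatrix}$ satisfies $Q^*|_{\ker P} = 0$, the off-diagonal $-\sum(A_j^* X_j + B_j A_j^*)$ must annihilate $V$; since $A_j^*|_V = 0$ kills the second sum, this reduces to $\sum A_j^* X_j v = 0$ for every $v \in V$. For any $\eta \in \mathcal{K}$, setting $w_j := A_j \eta \in W$ and pairing the identity against $\eta$ yields $\langle v, \sum X_j w_j \rangle = 0$ for all $v \in V$ (using that $X_j$ is self-adjoint), so $\sum X_j w_j \in W$. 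If the dilation is nontrivial then some $A_j \neq 0$, so $\eta$ can be chosen so that $(w_j)_{j=1}^g$ is nonzero. This shows that the nonexistence of such a tuple is necessary for Arveson extremity, and it is equivalent to injectivity of the map $\Phi \colon W^g \to V$, $\Phi(w_1, \ldots, w_g) = \sum R_j w_j$, which in turn is equivalent to the $R_j$ being injective with linearly independent ranges.

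For the converse, given a nonzero tuple $(w_j) \in W^g$ with $u := \sum X_j w_j \in W$, I would construct a rank-one dilation by taking $\mathcal{K} = \mathbb{C}$, $A_j := t w_j$, $B_j := 0$ for small $t > 0$. Taking a Schur complement of $I - \sum Y_j^2$ with respect to the positive $(2,2)$ entry $1 - t^2 \sum \|w_j\|^2$ reduces the positivity check to
\[
(I-S) \, - \, t^2 \sum w_j w_j^* \, - \, \frac{t^2}{1 - t^2 \sum \|w_j\|^2}\, u u^* \, \succeq \, 0.
\]
Since $w_j$ and $u$ all lie in $W$, the subtracted positive operator is supported on $W$, where $(I-S)|_W$ is strictly positive; hence the inequality holds for all sufficiently small $t > 0$, yielding a nontrivial dilation of $X$ in $\mathcal{B}_g$.

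The expected main obstacle is the forward direction, specifically the careful application of the ``off-diagonal vanishes on $\ker P$'' principle combined with pairing the resulting $\mathcal{K}^g$-valued identity against a test vector $\eta$ to transfer it into the geometric statement $\sum X_j w_j \in W$ for $w_j = A_j \eta$. The converse is then a routine perturbative Schur complement estimate, and the passage between the two equivalent formulations of the condition (injectivity of $\Phi$ versus injectivity plus linearly independent ranges of the $R_j$) is a standard direct-sum argument.
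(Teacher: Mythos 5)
Your proof is correct and follows essentially the same route as the paper: a blockwise analysis of $I - \sum Y_j^2 \succeq 0$, with the $S=I$ case killed by the $(1,1)$ block, the condition $a_j \in W$ and $\sum X_j a_j \in W$ extracted from the kernel/off-diagonal structure, and a small-$t$ rank-one perturbation with $B_j=0$ for the converse. The only cosmetic differences are that you treat arbitrary block dilations (where the paper restricts at the outset to one-column dilations) and verify the converse via a Schur complement rather than the paper's direct reducing-subspace estimate.
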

\begin{proof}
In what follows, consider a dilation $Y_j = \begin{bmatrix} X_j & a_j \\ a_j^* & b_j \end{bmatrix}$, where $a_j$ is a column vector and $b_j$ is a real scalar, so that
\[ Q \, := \, \sum\limits_{j=1}^g Y_j^2 \,\, = \,\, \begin{bmatrix} S + \sum a_j a_j^* & \sum X_j a_j + a_j b_j \\ \sum a_j^* X_j + b_j a_j^* & \sum a_j^* a_j + b_j^2 \end{bmatrix}. \]
The positive matrix $Q$ must be a contraction in order for $Y \in \mathcal{B}_g$.

If $S = I$, then the top-left block immediately shows that each $a_j = 0$, so the dilation is trivial and $X$ is an Arveson extreme point. Similarly, if $S \not= I$, then since $S$ acts as the identity on $V$, we must have $a_j \in V^\perp = W$. Since the identity on $V$ cannot nontrivially dilate to a contraction, we must have $P_V( \sum X_j a_j + a_j b_j) = 0$, which means that $P_V(\sum X_j a_j) = 0$, or equivalently $\sum X_j a_j \in W$. As such, if a selection of $a_j \in W$ (not all zero) meeting this criterion does not exist, then $X$ is an Arveson extreme point.

Conversely, if $a_1, \ldots, a_g \in W$ (not all zero) may be found meeting $\sum X_j a_j \in W$, then we may select $b_j = 0$ in the dilation $Y_j$, and compute
\[ Q \, = \, \begin{bmatrix} S + \sum a_j a_j^* & \sum X_j a_j \\ \sum a_j^* X_j & \sum a^*_j a_j \end{bmatrix}. \]
On the reducing subspace $V \oplus \{0\} \subseteq \mathbb{C}^n \oplus \mathbb{C}$, $Q$ acts as the identity. On the complement $W \oplus \mathbb{C}$ (certainly also reducing), we note that $P_WS|_W \preceq (I - \varepsilon)I$ and the other terms scale down with the $a_j$, so we may choose $a_j$ sufficiently small that $Q$ is contractive. There exists a nontrivial dilation of $X$ in $\mathcal{B}_g$, and $X$ is not an Arveson extreme point.
\end{proof}

It is also not difficult to prove that $\cD_F$ always sits properly inside the matrix ball; we include the proof for completeness.

\begin{proposition}\label{prop:spin_inside_matrix}
For any $g \geq 2$, $\cD_F$ is a proper subset of $\mathcal{B}_g$. Equivalently, $\mathcal{B}_g^{\, \circ}$ is a proper subset of $\mathcal{W}(F)$.
\end{proposition}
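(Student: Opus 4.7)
The plan splits into establishing the inclusion and then the strictness.

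For the inclusion $\cD_F \subseteq \mathcal{B}_g$, I would start from $X \in \cD_F$ and invoke Corollary \ref{cor:symmetry2} (taking $U = -I$) to conclude $-X \in \cD_F$ as well. Thus the self-adjoint operator $S := \sum_j F_j \otimes X_j$ satisfies $\pm S \preceq I$, and hence $S^2 \preceq I$. Expanding $S^2$ using $F_j^2 = I$ and $F_j F_k = -F_k F_j$ for $j \neq k$ would give
\[
S^2 \,=\, I \otimes \sum_{j=1}^g X_j^2 \,+\, \sum_{j < k} F_j F_k \otimes [X_j, X_k].
\]
I would then apply the UCP partial trace $(\operatorname{Tr}/d) \otimes \operatorname{id}$, where $d$ is the matrix dimension in which the $F_j$ live. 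The crucial observation is that $F_j F_k$ is traceless for $j \neq k$: conjugation by the self-adjoint unitary $F_j$ sends $F_j F_k$ to $-F_j F_k$, forcing $\operatorname{Tr}(F_j F_k) = 0$. Thus the cross terms vanish under the partial trace and the resulting inequality is exactly $\sum_j X_j^2 \preceq I$, placing $X$ in $\mathcal{B}_g$.

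For strict containment I would exhibit an explicit witness. In the base case $g = 2$, consider $Y = (P_1/\sqrt{2}, P_2/\sqrt{2})$: it satisfies $Y_1^2 + Y_2^2 = I$, so $Y \in \mathcal{B}_2$, while a short eigenvalue computation shows $P_1 \otimes P_1 + P_2 \otimes P_2$ has operator norm $2$, so $\|\sum_j F_j \otimes Y_j\| = \sqrt{2} > 1$ and $Y \notin \cD_{F^{[2]}}$. For general $g \geq 2$, pad with zeros to obtain $Y^{(g)} = (P_1/\sqrt{2}, P_2/\sqrt{2}, 0, \ldots, 0)$, still satisfying $\sum_j (Y^{(g)}_j)^2 = I$ and hence lying in $\mathcal{B}_g$. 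By Corollary \ref{cor:extending_by_zero}, $Y^{(g)} \in \cD_{F^{[g]}}$ if and only if its first two coordinates lie in $\cD_{F^{[2]}}$, which has just been ruled out.

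The main conceptual hurdle is the upper inclusion: squaring the self-adjoint operator $S$ is only legitimate once one has both $S \preceq I$ and $-S \preceq I$, which is exactly why the negation symmetry from Corollary \ref{cor:symmetry2} must be invoked before anything else. Once $S^2 \preceq I$ is in hand, expanding and applying the partial trace is routine, with the tracelessness of $F_j F_k$ for $j \neq k$ being the small algebraic fact that makes the partial trace cleanly isolate $\sum_j X_j^2$.
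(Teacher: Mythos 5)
Your proof is correct, and the overall strategy matches the paper's: use the sign symmetry from Corollary \ref{cor:symmetry2} to get $\pm S \preceq I$ for $S = \sum_j F_j \otimes X_j$, square, and kill the cross terms; then exhibit an explicit witness for strictness. The two places you diverge are worth noting. First, to cancel the terms $F_jF_k \otimes [X_j,X_k]$ the paper iteratively conjugates by $F_1 \otimes I, \ldots, F_{g-1}\otimes I$ and averages, whereas you apply the single UCP map $(\operatorname{Tr}/d)\otimes \mathrm{id}$ and observe that $F_jF_k$ is traceless for $j \neq k$; these are equivalent in substance (both are unital positive maps annihilating the cross terms), but your version does all cancellations in one step and the tracelessness argument via conjugation by $F_j$ is clean. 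Second, for strictness the paper takes the single witness $\tfrac{1}{\sqrt{g}}F$, which requires citing \cite[Lemma 7.23]{DDOSS17} for the fact that $\sum_j F_j \otimes F_j$ has $g$ as an eigenvalue; you instead verify the $g=2$ case by a direct $4\times 4$ eigenvalue computation and propagate it to all $g$ by padding with zeros via Corollary \ref{cor:extending_by_zero}. Your route is more self-contained, trading an external lemma for a small computation plus a result already established in the paper; the paper's witness has the aesthetic advantage of being a scaled copy of $F$ itself, which makes the failure of membership in $\cD_F$ sharp in all coordinates rather than concentrated in the first two.
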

\begin{proof}
Fix $g \geq 1$ and suppose $X = (X_1, \ldots, X_g) \in \cD_F$. Then the symmetry of Corollary \ref{cor:symmetry2} implies that $-I \preceq \sum\limits_{j=1}^g F_j \otimes X_j \preceq I$, and hence the $C^*$-norm identity gives that 
\[ \sum\limits_{m=1}^g I \otimes X_m^2  \,\, + \,\, \sum\limits_{j \not= k} F_j F_k \otimes X_j X_k  \,\, = \,\, \sum\limits_{m=1}^g I \otimes X_m^2 \, + \sum\limits_{1 \leq j < k \leq g} F_j F_k \otimes (X_j X_k - X_k X_j)  \,\, \preceq \,\, I.\]
Computing the average of this inequality and the inequality found by conjugating by $F_1 \otimes I$ yields 
\[ \sum\limits_{m=1}^g I \otimes X_m^2 \, + \sum\limits_{2 \leq j < k \leq g} F_j F_k \otimes (X_j X_k - X_k X_j)  \,\, \preceq \,\, I. \]
A similar averaging procedure with conjugations by $F_2 \otimes I, \ldots, F_{g-1} \otimes I$ in order will cancel all cross terms and show that
\[ \sum\limits_{m=1}^g I \otimes X_m^2 \,\, \preceq \,\, I.\]
Equivalently, $\sum\limits_{m=1}^g X_m^2 \, \preceq \, I$, and $X \in \mathcal{B}_g$. Finally, the free polar dual of the containment $\cD_F \subseteq \mathcal{B}_g$ shows that $\mathcal{B}_g^{\, \circ} \subseteq \cW(F)$.

To prove proper containment, consider a direct computation on the tuple $X = \frac{1}{\sqrt{g}} F$. Because $\sum\limits_{j=1}^g F_j \otimes F_j$ has $g$ as an eigenvalue by \cite[Lemma 7.23]{DDOSS17}, $\sum\limits_{j=1}^g F_j \otimes X_j$ has $\sqrt{g}$ as an eigenvalue, and hence $X \not\in \mathcal{D}_F$. However, $\sum\limits_{j=1}^g X_j^2 = \sum\limits_{j=1}^g \frac{1}{g}\, I = I$, so $X \in \mathcal{B}_g$.
\end{proof}

Combining \cite[Corollary 9.4]{DDOSS17} and \cite[Proposition 14.14]{HKMS19} with Proposition \ref{prop:spin_inside_matrix} and Theorem \ref{thm:RealSpinBallFreeExtreme} (see also Corollary \ref{cor:spinwrapup}) shows that for any $g \geq 2$,
\begin{equation} \mathcal{W}^{\text{min}}(\overline{\mathbb{B}_g}) \,\, \subseteq \,\, \mathcal{D}_F \,\, \subseteq \,\, \mathcal{B}_g \,\, \subseteq \,\, \mathfrak{D}_g \,\, \subseteq \,\, \mathcal{B}_g^{\, \circ} \,\, \subseteq \,\, \mathcal{W}(F) \,\, \subseteq \,\, \mathcal{W}^{\text{max}}(\overline{\mathbb{B}_g}), \end{equation}
where for $g = 2$ the first and last containments are equalities but the others are proper, and for $g \geq 3$ all of the containments are proper.

In the non self-adjoint case, there are multiple additional{\iffalse {\eric ?? I added well-studied because it sounded before like there aren't other free spectrahedron examples. There should be infinitely many examples by just using Matricial Hahn Banach to separate points of then add that to the pencil. Its not clear that any of these are interesting though and they certainly aren't well studied.} \color{red} I didn't like that especially considering the phrase well-studied already appears in the sentence. I don't think it's accurate to call them all well-studied. \fi} examples of matrix convex sets over the ball, such as the well-studied row contractions \cite{Zh14}, as well as the \lq\lq mixed row contractions\rq\rq\hspace{0pt} $\mathcal{M}(d, g)$ of \cite{Pas22}. Here $d$ and $g$ denote the number of non self-adjoint and self-adjoint variables, respectively. We also introduce another matrix convex set over the ball.

\begin{definition}
Fix $d \geq 1$. Let $\mathcal{Q}_d$ denote the matrix convex set in $d$ non self-adjoint variables defined by the condition that each tuple $(T_1, T_2, \ldots, T_d) \in \mathcal{Q}_d$ satisfies
\[ \| \lambda_1 T_1 + \lambda_2 T_2 + \ldots + \lambda_d T_d \| \, \leq \, 1 \]
for every $\lambda = (\lambda_1, \lambda_2, \ldots, \lambda_d) \in \mathbb{C}^d$ with $|\lambda_1|^2 + |\lambda_2|^2 + \ldots + |\lambda_d|^2 = 1$.
\end{definition}

\begin{theorem}
The Cuntz isometries are maximal, infinite-dimensional elements of $\mathcal{Q}_d$.
\end{theorem}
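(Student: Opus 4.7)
The plan is to show two things: (1) the Cuntz isometries $S = (S_1, \dots, S_d)$ lie in $\mathcal{Q}_d$; and (2) any dilation
\[ T_j \,\, = \,\, \begin{bmatrix} S_j & a_j \\ b_j & c_j \end{bmatrix} \]
whose tuple $T = (T_1,\dots,T_d)$ also belongs to $\mathcal{Q}_d$ must satisfy $a_j = b_j = 0$ for every $j$. Membership is immediate from the Cuntz relations $S_i^* S_j = \delta_{ij} I$: for any unit vector $\lambda \in \mathbb{C}^d$,
\[ \left(\sum_j \lambda_j S_j\right)^{\!*} \left(\sum_j \lambda_j S_j\right) \,\, = \,\, \sum_{i,j} \overline{\lambda}_i \lambda_j \, S_i^* S_j \,\, = \,\, \sum_i |\lambda_i|^2 \, I \,\, = \,\, I, \]
so each such linear combination is an isometry of norm exactly $1$. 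The infinite-dimensionality of the underlying Hilbert space, for $d \geq 2$, is a standard trace count: $\sum_i \Tr(S_i^*S_i) = d \cdot \dim H$ while $\sum_i \Tr(S_iS_i^*) = \Tr(I) = \dim H$, which forces $\dim H = \infty$.

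To kill the lower-left blocks, I would invoke the usual column argument. For unit $\lambda$ the operator $\sum_j \lambda_j T_j$ is a contraction, so its $(1,1)$-block of $(\sum_j \lambda_j T_j)^*(\sum_j \lambda_j T_j)$, namely $I + (\sum_j \lambda_j b_j)^*(\sum_j \lambda_j b_j)$, is at most $I$. Hence $\sum_j \lambda_j b_j = 0$ for every unit $\lambda$, and specializing $\lambda$ to each standard basis vector in $\mathbb{C}^d$ yields $b_j = 0$.

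The upper-right blocks require the full strength of the Cuntz relations. Once $b_j = 0$, the matrix $\sum_j \lambda_j T_j$ is block upper-triangular with isometric $(1,1)$-block $V_\lambda := \sum_j \lambda_j S_j$; the contraction condition $(\sum_j \lambda_j T_j)^*(\sum_j \lambda_j T_j) \preceq I$ then reads
\[ \begin{bmatrix} I & V_\lambda^*A_\lambda \\ A_\lambda^*V_\lambda & A_\lambda^*A_\lambda + C_\lambda^*C_\lambda \end{bmatrix} \, \preceq \, I, \]
where $A_\lambda = \sum_j \lambda_j a_j$ and $C_\lambda = \sum_j \lambda_j c_j$. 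Since the $(1,1)$-block already saturates, a direct test vector (or Schur complement) argument forces $V_\lambda^* A_\lambda = 0$, i.e.
\[ \sum_{p,q} \overline{\lambda}_p \lambda_q \, S_p^* a_q \,\, = \,\, 0 \qquad \text{for all } \lambda \in \mathbb{C}^d. \]
Polarizing this sesquilinear identity (testing against $e_p$, $e_p + e_q$, and $e_p + i e_q$) isolates each summand and yields $S_p^* a_q = 0$ for every pair $(p,q)$. Thus each $a_q$ lies in $\bigcap_p \ker(S_p^*) = \bigl(\sum_p \ran(S_p)\bigr)^\perp$, which is $\{0\}$ precisely because $\sum_p S_pS_p^* = I$.

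The main obstacle I anticipate is the upper-right block. Eliminating $b_j$ uses only that $\sum_j \lambda_j S_j$ is an isometry, which would hold for any tuple of isometries with pairwise orthogonal ranges; eliminating $a_j$, by contrast, fundamentally relies on the aggregate coisometric condition $\sum_p S_p S_p^* = I$ to kill the common kernel $\bigcap_p \ker(S_p^*)$. This is exactly what distinguishes the Cuntz isometries from, for example, the generators of the Toeplitz-Cuntz algebra, where this common kernel is nontrivial and so the conclusion would fail.
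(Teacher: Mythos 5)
Your proof is correct and follows essentially the same route as the paper: membership via the $C^*$-identity, elimination of the lower-left blocks from the $(1,1)$-block of $\left(\sum_j \lambda_j T_j\right)^*\left(\sum_j \lambda_j T_j\right)$, and elimination of the upper-right blocks by polarizing the vanishing off-diagonal block and then invoking $\sum_p S_pS_p^* = I$. The only differences are cosmetic: you factor the quadratic form as $B_\lambda^*B_\lambda$ where the paper expands it and averages over sign flips of the $\lambda_i$, and you supply the trace-count argument for infinite-dimensionality, which the paper's proof leaves implicit.
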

\begin{proof}
Since a Cuntz tuple $(T_1, T_2, \ldots, T_d)$ satisfies $T_1T_1^* + T_2T_2^* + \ldots + T_d T_d^* = I = T_j^*T_j$ and $T_i^* T_j = 0$ for $i \not= j$, membership of $(T_1, T_2, \ldots, T_d)$ in $\mathcal{Q}_d$ is established by the C*-norm identity applied to 
\[ (\overline{\lambda_1} T_1^* + \overline{\lambda_2} T_2^* + \ldots + \overline{\lambda_d}T_d^*)(\lambda_1 T_1 + \lambda_2 T_2 + \ldots + \lambda_d T_d) \,\, = \,\, (|\lambda_1|^2 + |\lambda_2|^2 + \ldots + |\lambda_d|^2)I. \]
Now, consider an arbitrary dilation $S = \left( \begin{bmatrix} T_i & A_i \\ B_i & C_i \end{bmatrix} \right)_{i=1}^d \in \mathcal{Q}_d$. Define
\[ R_\lambda \,\, := \,\, (\overline{\lambda_1} S_1^* + \overline{\lambda_2} S_2^* + \ldots + \overline{\lambda_d}S_d^*)(\lambda_1 S_1 + \lambda_2 S_2 + \ldots \lambda_d S_d) \]
for $\lambda \in \mathbb{C}^d$ with $|\lambda_1|^2 \, + \, |\lambda_2|^2 + \ldots + |\lambda_d|^2 = 1$, and note that the top-left block of $R_\lambda$ is 
\[ \sum\limits_{i=1}^d |\lambda_i|^2(T_i^*T_i + B_i^*B_i) + \sum\limits_{i < j} \left( \overline{\lambda_i} \lambda_j (T_i^*T_j + B_i^*B_j) + \lambda_i \overline{\lambda_j} (T_j^*T_i + B_j^* B_i) \right)\]
\[ = \sum\limits_{i=1}^d |\lambda_i|^2 I \,\, + \,\, \sum\limits_{i=1}^d |\lambda_i|^2 B_i^*B_i  \,\, + \sum\limits_{i < j} \left( \overline{\lambda_i} \lambda_j B_i^*B_j + \lambda_i \overline{\lambda_j} B_j^* B_i \right)\]
\[ =  I \,\, + \,\, \sum\limits_{i=1}^d |\lambda_i|^2 B_i^*B_i \,\, + \sum\limits_{i < j} \left( \overline{\lambda_i} \lambda_j B_i^*B_j + \lambda_i \overline{\lambda_j} B_j^* B_i \right),\]
which must remain contractive for all choices of $\lambda$. An averaging procedure using the terms $(\pm \lambda_1, \lambda_2, \ldots, \lambda_g)$, then $(\lambda_1, \pm \lambda_2, \lambda_3, \ldots, \lambda_g)$, and so on will cancel the cross terms and show that
\[ I + \sum\limits_{i=1}^d |\lambda_i|^2 B_iB_i^* \] 
is also contractive. Restricting to the case when all entries of $\lambda$ are nonzero immediately implies that each $B_i = 0$. So,
\[ S_i \, = \, \begin{bmatrix} T_i & A_i \\ 0 & C_i \end{bmatrix} \]
and the top-left block of $R_\lambda$ is $I$. Next, the top-right block of $R_\lambda$ (which must be $0$ because $R_\lambda$ is a contraction) is 
\[ \sum\limits_{i=1}^d |\lambda_i|^2 T_i^* A_i \,\, + \,\, \sum\limits_{i < j} \left( \overline{\lambda_i} \lambda_j T_i^*A_j + \lambda_i \overline{\lambda_j} T_j^* A_i \right) \,\,\, = \,\,\, 0.\]
Evaluating at choices of $\lambda$ that vanish in all but one coordinate shows that $T_i^* A_i = 0$ for all $i$. Then, for any $\lambda$ that is nonzero only in the two coordinates $i < j$, we have $\overline{\lambda_i} \lambda_j T_i^*A_j + \lambda_i \overline{\lambda_j} T_j^* A_i = 0$. We may consider when $\lambda_i$ is real and $\lambda_j$ is imaginary, then the reverse, to reach two linear equations that together imply $T_i^* A_j = 0 = T_j^* A_i$. Finally, 
\[ \forall \, i, j \in \{1, \ldots, d\}, \,\,\, T_i^* A_j = 0.\]
It follows that for $j \in \{1, \ldots, d\}$, 
\[ A_j \,\, = \,\, (T_1 T_1^* + T_2 T_2^* + \ldots + T_d T_d^*) A_j \,\, = \,\, T_1(0) + \ldots + T_d(0) \,\, = \,\, 0,\]
which proves that the dilation $(S_1, \ldots, S_d)$ of $(T_1, \ldots, T_d)$ was trivial.
\end{proof}

However, the Cuntz isometries clearly cannot be the only maximal elements. 

\begin{corollary}
The adjoints of the Cuntz isometries are maximal, infinite-dimensional elements of $\mathcal{Q}_d$.
\end{corollary}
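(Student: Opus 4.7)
The plan is to deduce this corollary from the preceding theorem via a short symmetry argument, using the fact that both the defining condition of $\mathcal{Q}_d$ and the notion of dilation behave well under the entrywise adjoint. First I would verify that $\mathcal{Q}_d$ is closed under the map $(T_1, \ldots, T_d) \mapsto (T_1^*, \ldots, T_d^*)$. This is immediate from the identity
\[ \left\| \sum_{i=1}^d \lambda_i T_i^* \right\| \, = \, \left\| \left( \sum_{i=1}^d \overline{\lambda_i} T_i \right)^* \right\| \, = \, \left\| \sum_{i=1}^d \overline{\lambda_i} T_i \right\|, \]
together with the observation that the map $\lambda \mapsto \overline{\lambda}$ preserves the unit sphere of $\mathbb{C}^d$.

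Next I would use this to translate the maximality of the Cuntz isometries (the previous theorem) into maximality of their adjoints. Suppose $(T_1, \ldots, T_d)$ is a Cuntz tuple and that
\[ S_i \, = \, \begin{bmatrix} T_i^* & A_i \\ B_i & C_i \end{bmatrix} \]
defines a dilation of $(T_1^*, \ldots, T_d^*)$ lying in $\mathcal{Q}_d$. Taking entrywise adjoints yields
\[ S_i^* \, = \, \begin{bmatrix} T_i & B_i^* \\ A_i^* & C_i^* \end{bmatrix}, \]
which by the previous paragraph is still in $\mathcal{Q}_d$ and is now a dilation of $(T_1, \ldots, T_d)$. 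The preceding theorem then forces $A_i^* = 0$ and $B_i^* = 0$ for every $i$, and hence $A_i = B_i = 0$, so the original dilation of $(T_1^*, \ldots, T_d^*)$ was trivial.

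Since any Cuntz tuple acts on an infinite-dimensional Hilbert space, the same is true of its adjoints, giving the infinite-dimensionality claim for free. I do not anticipate any real obstacle here: the content of the corollary is essentially that maximality in $\mathcal{Q}_d$ is preserved by entrywise adjoints, and the only thing that needs to be checked is that the defining inequalities of $\mathcal{Q}_d$ are invariant under this operation, which is built into the definition.
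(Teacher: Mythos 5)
Your proposal is correct and takes essentially the same approach as the paper, which simply observes that $\mathcal{Q}_d$ is closed under the coordinatewise adjoint $T_i \mapsto T_i^*$; you have merely written out the routine details (invariance of the norm condition under adjoints and the transfer of maximality through adjointing a dilation) that the paper leaves implicit.
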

\begin{proof}
The set $\mathcal{Q}_d$ is closed under the coordinatewise adjoint $T_i \mapsto T_i^*$.
\end{proof}

\section{Free Spectrahedrops}\label{sec:drops}

In this section we study projections of free spectrahedra and their extreme points, with a particular emphasis on the role of the real and complex fields. We begin by showing that a particularly nice property of the extreme points of the first level of a real free spectrahedron is preserved under projections. 
\begin{proposition}
\label{prop:RealDropEucAreFreeLevel1}
Fix $g \leq h$ and let $\cD_A \subseteq SM(\C)^h$ be a bounded free spectrahedron. Additionally assume that $\cD_A (2)$ is closed under complex conjugation. Then the Euclidean extreme points of $\cP_g \cD_A (1)$ are free extreme points of $\cP_g \cD_A$. 
\end{proposition}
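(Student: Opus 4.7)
The plan is to reduce to real coefficients and then analyze the defining linear matrix inequality of an arbitrary dilation of $X$ to force rigidity in the first $g$ coordinates.

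Since $\cD_A(2)$ is closed under complex conjugation, by \cite[Lemma 3.3]{EH} (or \cite[Theorem 2.4]{Eve23}) we may assume without loss of generality that $A$ consists of real symmetric matrices; then $\cD_A(n)$ is conjugation closed for every $n$. Let $X \in \Euc(\cP_g \cD_A(1))$, and suppose $Y \in \cP_g \cD_A(n)$ is a dilation of $X$, realized via the isometry $V = e_1$ so that each $Y_j = \begin{pmatrix} x_j & a_j^* \\ a_j & B_j \end{pmatrix}$ with $a_j \in \C^{n-1}$. To show $X$ is free extreme in $\cP_g \cD_A$, it suffices to prove that $a_j = 0$ for every $j \in \{1, \ldots, g\}$, as then $Y = X \oplus Y'$ with $Y' \in \cP_g \cD_A(n-1)$. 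Pick any witness $(Y, Z) \in \cD_A(n)$, writing $Z_k$ in analogous block form with off-diagonals $c_k$, and set $z = e_1^* Z e_1$; the LMI $L_A(Y, Z) \succeq 0$ decomposes into a block matrix whose $(1,1)$-block is $L_A(X, z)$.

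The generalized Schur complement condition implies that for each column $l \in \{1, \ldots, n-1\}$, the tuple
\[ \alpha^{(l)} := (a_{1,l}, \ldots, a_{g,l}, c_{g+1, l}, \ldots, c_{h,l}) \in \C^h \]
lies in the $\C$-subspace $\mathcal{N}(z) := \{\alpha \in \C^h : \Lambda_A(\alpha) v = 0 \text{ for all } v \in \ker L_A(X, z)\}$, which is itself closed under entrywise conjugation by the real structure of $A$. The crux of the proof---and its main obstacle---is to show that the first $g$ coordinates of every $\alpha \in \mathcal{N}(z)$ vanish. Given $\alpha \in \mathcal{N}(z)$ with nonzero projection to the first $g$ coordinates, one constructs a level-$2$ dilation $\tilde Y$ of $X$ whose compressions $v_\theta^* \tilde Y v_\theta$ trace a curve in $\cP_g \cD_A(1)$ through $X$ with first-order tangent in the direction of $\re(\pi_g \alpha)$. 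Combining the $\C$-linearity and conjugation symmetry of $\mathcal{N}(z)$ with the freedom in choosing the bottom-right block $B$ subject to the Schur complement, one seeks to cancel the second-order curvature and produce an actual line segment through $X$ in $\cP_g \cD_A(1)$, contradicting Euclidean extremality. Once this is done, $a_{j,l} = 0$ for all $j \leq g$ and all $l$, so $a_j = 0$ and $Y = X \oplus Y'$.

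An alternative route, in the spirit of \cite[Theorem 1.3]{EH}, is to select $z$ to be Euclidean extreme in the fiber $\{w \in \mathbb{R}^{h-g} : (X, w) \in \cD_A(1)\}$---which forces $(X, z)$ to be Euclidean extreme in $\cD_A(1)$, hence free extreme in $\cD_A$ by standard results for real free spectrahedra at level one---and then to leverage the Krein-Milman decomposition of a witness of $Y$: Euclidean extremality of $X$ forces every component of the decomposition to compress to $X$ on the first $g$ coordinates, and free extremality of $(X, z)$ should then pin these components down as unitary copies of $(X, z)$ itself, yielding the direct sum decomposition of $Y$. The main technical difficulty in this alternative approach is matching an arbitrary witness of $Y$ to the chosen $z$, which is essentially the same obstruction as the line-segment construction above.
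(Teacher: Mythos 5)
There is a genuine gap: you set up the correct objects but explicitly leave the central step unproven. Having reduced to showing that every $\alpha \in \mathcal{N}(z)$ has vanishing first $g$ coordinates, you sketch two strategies (a curve with ``second-order curvature'' to be cancelled, and a witness-matching argument) and acknowledge that both run into the same unresolved obstruction. Neither is carried out, and neither is needed. The missing step is the standard kernel--perturbation lemma applied to a \emph{real} element of $\mathcal{N}(z)$: you already observed that $\mathcal{N}(z)$ is closed under entrywise conjugation, so if some $\alpha \in \mathcal{N}(z)$ has a nonzero first-$g$-coordinate block, then so does $\re(\alpha)$ or $\im(\alpha)$, and we may take $\alpha$ real. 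For real $\alpha$, the condition $\ker L_A(X,z) \subseteq \ker \Lambda_A(\alpha)$ together with self-adjointness of $\Lambda_A(\alpha)$ gives, for small $t>0$, the \emph{exact} (not merely first-order) membership $L_A\big((X,z) \pm t\alpha\big) = L_A(X,z) \mp t\,\Lambda_A(\alpha) \succeq 0$, since both operators are block diagonal with respect to $\ker L_A(X,z) \oplus \ker L_A(X,z)^\perp$. Projecting, $X \pm t\,\pi_g(\alpha) \in \cP_g\cD_A(1)$ is a genuine line segment through $X$, contradicting Euclidean extremality. There is no curvature to cancel, and no need to match witnesses.

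Two further remarks. First, the paper argues the contrapositive and, using the standard fact that failure of Arveson extremality is witnessed by a one-column dilation, only ever needs a level-$2$ dilation of the level-$1$ point $X$; the level-$2$ conjugation hypothesis is then applied directly to that dilation (conjugate, multiply the off-diagonal by $\pm i$ via unitary conjugation, and average) to realify the off-diagonal data $(\beta,\eta)$. This avoids your opening reduction to a real coefficient tuple $A$, which as stated is not justified: the cited results (\cite[Lemma 3.3]{EH}, \cite[Theorem 2.4]{Eve23}) require the free spectrahedron to be conjugation closed, whereas the hypothesis here is only that $\cD_A(2)$ is. Second, your direct approach with an arbitrary level-$n$ dilation forces you to track all columns $\alpha^{(l)}$ simultaneously; the contrapositive with a single column is cleaner and is where the argument actually closes.
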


\begin{proof}
    The fact that a free extreme point is Euclidean extreme always holds, so we need only show that if $X$ is Euclidean extreme then it is free extreme. It is sufficient to show that if $X \in \cP_g \cD_A (1)$ is not free extreme, then it is not Euclidean extreme. If $X$ is not free extreme in $X \in \cP_g \cD_A (1)$, then there exist a nonzero $\beta \in M_{n \times 1} (\CC)^{g}$ and a $\gamma \in \RR^g$ such that
    \[
    \begin{bmatrix}
    X & \beta \\
    \overline{\beta} & \gamma
    \end{bmatrix} \in \cP_g \cD_A(2),
    \]
    from which it follows that there exists a tuple 
    \begin{equation}
    \label{eq:dilationInDrop}
    \begin{bmatrix}
    Y & \eta \\
   \overline{\eta} & \sigma
    \end{bmatrix}  \in SM_2 (\C)^{h-g} \qquad \mathrm{s.t.} \qquad  \left( \begin{bmatrix}
    X & \beta \\
    \overline{\beta} & \gamma
    \end{bmatrix},
     \begin{bmatrix}
    Y & \eta \\
    \overline{\eta} & \sigma
    \end{bmatrix} \right)\in \cD_A (2).
    \end{equation}
    Using the assumption that $\cD_A(2)$ is closed under complex conjugation and the fact that a free spectrahedron is closed under unitary conjugation, we now obtain that
    \[
\left( \begin{bmatrix}
    X & \overline{\beta} \\
    \beta & \gamma
    \end{bmatrix},
     \begin{bmatrix}
    Y & \overline{\eta} \\
    \eta & \sigma
    \end{bmatrix} \right),\left( \begin{bmatrix}
    X & -i\beta \\
    i\overline{\beta} & \gamma
    \end{bmatrix},
     \begin{bmatrix}
    Y & i\eta \\
    -i\overline{\eta} & \sigma
    \end{bmatrix} \right),\left( \begin{bmatrix}
    X & -i\beta \\
    i\overline{\beta} & \gamma
    \end{bmatrix},
     \begin{bmatrix}
    Y & -i\eta \\
    i\overline{\eta}& \sigma
    \end{bmatrix} \right)\in \cD_A (2).
    \]
    Next, using the convexity of $\cD_A(2)$ gives that
    \[
    \left(\begin{bmatrix}
    X & \mathrm{Re}(\beta) \\
    \mathrm{Re}(\beta) & \gamma
    \end{bmatrix},
     \begin{bmatrix}
    Y & \mathrm{Re}(\eta)\\
    \mathrm{Re}(\eta) & \sigma
    \end{bmatrix} \right), \left(\begin{bmatrix}
    X & \mathrm{Im}(\beta) \\
    \mathrm{Im}(\beta) & \gamma
    \end{bmatrix},
     \begin{bmatrix}
    Y & \mathrm{Im}(\eta)\\
    \mathrm{Im}(\eta) & \sigma
    \end{bmatrix} \right) \in \cD_A(2).
    \]
    As $\beta$ is nonzero, at least one of $\mathrm{Re}(\beta)$ and $\mathrm{Im}(\beta)$ is nonzero. Thus, we can without loss of generality additionally assume that $(\beta,\eta)$ is real valued and that $\beta$ is nonzero.

    Equation \eqref{eq:dilationInDrop} now implies that $ L_A (X,Y) \succeq 0$ and moreover that
    \[
    \ker L_A (X,Y) \subseteq \ker \Lambda_A (\beta,\eta).
    \]
    Furthermore, since $(\beta,\eta)$ is real, it follows that there is an $\alpha > 0$ such that 
    \[
L_{(A,B)} (X\pm \alpha \beta ,Y \pm \alpha \eta) \succeq 0,
    \]
    hence $X \pm \alpha \beta \in \cP_g \cD_{A} (1)$. As $\beta$ is nonzero, we conclude that $X$ is not Euclidean extreme in $\cP_2 \cD_{A} (1)$.
\end{proof}

\begin{remark}
    Proposition \ref{prop:RealDropEucAreFreeLevel1} is a direct generalization of \cite[Proposition 6.1]{EHKM}. In particular, the result emphasizes that projections of real free spectrahedra must have a reasonable number of free extreme points and that the matrix convex hull of those free extreme points recovers at least the first level of the set. 
\end{remark}

Using the above proposition shows that the free polar dual of a bounded real free spectrahedron can fail to be a real free spectrahedrop, in contrast with \cite[\S 4]{HKM17} for the complex case.

\begin{corollary}\label{cor:drop_polyhedron_bounds}
   Let $A \in SM_d(\RR)^g$ and assume that $\cD_A$ is a bounded free spectrahedron. If $\cD_A^\circ$ is the projection of a free spectrahedron whose second level is closed under complex conjugation, then $\cD_A (1)$ must be a polyhedron. 

   Furthermore, in this case, there must exist a unitary $U$ and a tuple of diagonal matrices $B \in SM_{d_1} (\RR)^g$ with $d_1 \geq g+1$ such that $U^* A U = B\oplus C \in SM_d (\RR)^g$ where $C$ is some self-adjoint matrix tuple of matrix size $d-d_1$.
\end{corollary}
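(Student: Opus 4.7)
The plan is to combine Proposition \ref{prop:RealDropEucAreFreeLevel1} with a characterization of level-one free extreme points of $\cD_A^\circ$ as joint eigenvalues of $A$. By hypothesis, $\cD_A^\circ = \cP_g \cD_M$ for some free spectrahedron $\cD_M$ whose second level is closed under complex conjugation, so the proposition immediately yields that every Euclidean extreme point of $\cD_A^\circ(1)$ is a free extreme point of $\cD_A^\circ$. The remainder of the argument reduces to bounding the number of such points.

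To set up the dual identification, I would first note that $L_A(0) = I \succ 0$ places $0$ in the interior of $\cD_A(1)$, so the classical polar $\cD_A^\circ(1) = \conv(\cW_1(A) \cup \{0\})$ is bounded with $0$ in its interior, which in turn forces $0 \in \cW_1(A)$. The identification $\cD_A^\circ = \cW(A) = \mconv(A)$ from the introduction then applies, reducing the problem to understanding free extreme points of a matrix range.

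Next I would characterize level-one free extreme points of $\mconv(A)$ as joint eigenvalues of $A$. A point $x \in \cW_1(A)$ has the form $x_i = v^* A_i v$ for some unit $v \in \CC^d$, and is free extreme precisely when $\Span(v)$ is a reducing subspace of $A$, i.e., when $v$ is a joint eigenvector. If $v$ is not a joint eigenvector then some $A_j v \notin \Span(v)$, and the compression of $A$ to $\Span(v, A_j v)$ is a nontrivial dilation of $x$ that lies in $\mconv(A)$. Since distinct joint eigenvalues produce orthogonal joint eigenvectors by self-adjointness of the $A_i$, there are at most $d$ such values. Hence $\cD_A^\circ(1)$ has finitely many Euclidean extreme points, is a polytope, and dually $\cD_A(1)$ is a polyhedron.

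For the furthermore statement, let $W$ be the span of the joint eigenvectors of $A$. Because $A$ is real symmetric, the joint eigenvectors may be chosen real, and orthogonality yields a real orthonormal basis of $W$ on which each $A_i$ acts diagonally; self-adjointness also forces $W^\perp$ to be invariant, yielding $U^* A U = B \oplus C$ with $B$ diagonal and real. The inequality $d_1 = \dim W \geq g+1$ follows because $\cD_A^\circ(1)$ must be a full-dimensional polytope in $\RR^g$ (otherwise $A_1, \ldots, A_g, I$ would be linearly dependent, contradicting the nondegenerate bounded $g$-variable setup), so it admits at least $g+1$ distinct vertices and hence at least $g+1$ pairwise orthogonal joint eigenvectors. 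The main obstacle I anticipate is the dilation step: verifying that the compression to $\Span(v, A_j v)$ yields a genuinely nontrivial dilation in $\mconv(A)$ requires careful bookkeeping, and an alternate route via Arveson boundary representations of the operator system spanned by $I, A_1, \ldots, A_g$ may give a cleaner proof of the joint eigenvalue characterization.
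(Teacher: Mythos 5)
Your proposal is correct and follows essentially the same route as the paper: Proposition \ref{prop:RealDropEucAreFreeLevel1} forces every Euclidean extreme point of $\cD_A^\circ(1)=\cW_1(A)$ to be free extreme, finiteness of the level-one free extreme points makes $\cD_A^\circ(1)$ a polytope, and classical polar duality together with the vertex count of a full-dimensional bounded polytope in $\RR^g$ gives both the polyhedron conclusion and the $B\oplus C$ decomposition with $d_1\geq g+1$. The only real difference is that where the paper quotes that the free extreme points of $\cD_A^\circ=\mconv(A)$ are, up to unitary equivalence, the irreducible direct summands of $A$, you verify the needed level-one case by hand (Euclidean extreme points lie in the joint numerical range, and the compression of $A$ to $\operatorname{span}(v,A_jv)$ gives a nontrivial dilation unless $v$ is a joint eigenvector), which is sound and additionally yields the explicit bound of at most $d$ vertices.
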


\begin{proof}
    Note that if one works over $\C$ and does not assume closure under complex conjugation, then it is shown by \cite{HKM17} that a bounded free spectrahedrop is the projection of a bounded free spectrahedron. However, the proof is by construction, and the second level of the resulting bounded free spectrahedron is not closed under complex conjugation. Thus, to appeal to Proposition \ref{prop:RealDropEucAreFreeLevel1}, we must first show that if $\cD_A^\circ$ is the projection of a (possibly unbounded) free spectrahedron whose second level is closed under complex conjugation, then it is the projection of a bounded free spectrahedron whose second level is closed under complex conjugation. 
    
    To this end, suppose there exists an $h \in \mathbb{N}$ and $B \in SM (\C)^{g+h}$ such that $\cD_A^\circ = \mconv(A) = \cP_g (\cD_B)$ and such that $\cD_B(2) = \overline{\cD_B(2)}$. Since $A \in \cP_g (\cD_B)$, there is some $X \in SM_d(\C)^h$ with $(A,X) \in \cD_B$. Let $C \in SM(\C)^{g+h}$ be any tuple such that $\cD_C$ is bounded, $\cD_C (2) = \overline{\cD_C(2)}$, and $(A,X) \in \cD_C$. For example, one may take $C = \alpha F^{[g+h]}$ for an appropriate choice of $\alpha >0$. It is straightforward to verify that $\cD_B \cap \cD_C = \cD_{B \oplus C}$ is a bounded free spectrahedron whose second level is closed under complex conjugation. Moreover, since $A \in \cP_g \cD_{B \oplus C}$, we obtain
  \[
    \mconv(A) \subseteq \cP_g \cD_{B \oplus C} \subseteq \cP_g \cD_B = \mconv(A).
  \]
  Therefore, $\cD_A^\circ = \mconv(A) = \cD_{B \oplus C}$, which proves our initial claim.

  Now, WLOG assume $A = \oplus A^i$ where each $A^i \in SM_{d_i} (\RR)^g$ is an irreducible tuple of matrices. Then up to unitary equivalence, the free extreme points of $\cD_A^\circ$ are precisely the $A^i$. Now, if $\cD_A (1)$ is not a polyhedron, then $\cD_A^\circ (1)$ is also not a polyhedron, hence $\cD_A^\circ (1)$ has infinitely many Euclidean extreme points. It follows that not every Euclidean extreme point of $\cD_A^\circ (1)$ is free extreme. Thus, as a consequence of Proposition \ref{prop:RealDropEucAreFreeLevel1}, $\cD_A^\circ$ cannot be the projection of a bounded free spectrahedron whose second level is closed under complex conjugation. 

  In the furthermore statement, the fact that $\cD_A^\circ (1) = \mconv (B)$ if $\cD_A^\circ$ is the projection of a complex conjugation closed free spectrahedron is immediate from the first part of the argument. The fact that $d_1 \geq g+1$ follows from the fact that a bounded polyhedron in $g$ variables that has interior must have at least $g+1$ extreme points. 
\end{proof}

\begin{example}
As in the previous section, let
\[ F^{[2]} = \left(\begin{bmatrix}
        1 & 0 \\
        0 & -1
    \end{bmatrix},\begin{bmatrix}
        0 & 1 \\
        1 & 0
    \end{bmatrix}
    \right), \,\,\,\,\,\,\,\,\,\,\,\,\,\,
    P = \left(\begin{bmatrix}
        1 & 0 \\
        0 & -1
    \end{bmatrix},\begin{bmatrix}
        0 & 1 \\
        1 & 0
    \end{bmatrix},
    \begin{bmatrix}
        0 & i \\
        -i & 0 
    \end{bmatrix}
    \right),\]
and recall that $\cD_{F^{[2]}}^{\circ} = \cW(F^{[2]}) = \cW^{\max}(\overline{\mathbb{D}})$ is also equal to $\cP_2 \cD_P$ by Proposition \ref{prop:max_proj_pauli}. One may use the method discussed in the proof of \cite[Proposition 4.14]{HKM17} to compute this. In particular, given a bounded free spectrahedron $\cD_A$, \cite{HKM17} shows how to express $\cD_A^\circ$ as the projection of a (complex) free spectrahedron.

The above expresses a free spectrahedrop that is closed under complex conjugation as the projection of a free spectrahedron, but the latter is not closed under complex conjugation. Corollary \ref{cor:drop_polyhedron_bounds} demonstrates that no matter how $A$ is chosen, if $\cP_2 \cD_A = \cW^{\max}(\overline{\mathbb{D}})$, then $\cD_A$ must not be closed under complex conjugation.
\end{example}

\begin{proposition}
\label{proposition:MatExAreProjOfMatEx}
    Let $K \subset SM(\C)^h$ be a closed bounded matrix convex set and fix $g \leq h$. Then the matrix (Euclidean) extreme points of $\cP_g K$ are projections of matrix (Euclidean) extreme points of $K$. In notation,
    \[
    \matex \cP_g K \subset \cP_g \matex  K \qquad \mathrm{and} \qquad \Euc \cP_g K \subset \cP_g \Euc K.
    \]
\end{proposition}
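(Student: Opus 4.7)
My plan is to produce, for any extreme point $X$ of $\cP_g K$ at matrix level $n$, a canonical lift $(X, Y) \in K$ that is extreme in the same sense. I will work in the fiber $L(X) := \{Y \in SM_n^{h-g} : (X, Y) \in K\}$, which is nonempty (since $X \in \cP_g K$), closed, bounded, and convex, hence by Krein--Milman has a classical extreme point $Y \in \Euc L(X)$. I claim this $Y$ works in both cases.

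For the Euclidean case, assume $X \in \Euc \cP_g K$. If $(X, Y) = \tfrac{1}{2}(T^{(1)} + T^{(2)})$ with $T^{(j)} = (X^{(j)}, Y^{(j)}) \in K$, projection gives $X = \tfrac{1}{2}(X^{(1)} + X^{(2)})$, so Euclidean extremity of $X$ forces $X^{(j)} = X$; then each $Y^{(j)} \in L(X)$ and $Y = \tfrac{1}{2}(Y^{(1)} + Y^{(2)})$, so Euclidean extremity of $Y$ in $L(X)$ forces $Y^{(j)} = Y$. Hence $(X, Y) \in \Euc K$ and $X \in \cP_g \Euc K$.

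For the matrix case, assume $X \in \matex \cP_g K$ and consider a matrix convex decomposition $(X, Y) = \sum_{i=1}^m V_i^*(X^{(i)}, Y^{(i)}) V_i$ with each $V_i \neq 0$, each $(X^{(i)}, Y^{(i)}) \in K$ of matrix dimension $n_i \leq n$, and $\sum V_i^* V_i = I_n$. Projecting yields $X = \sum V_i^* X^{(i)} V_i$, and matrix extremity of $X$ in $\cP_g K$ gives unitaries $U_i$ with $X^{(i)} = U_i X U_i^*$ and $n_i = n$. The critical step is to upgrade this to the statement that each $V_i$ is a scalar multiple of a unitary, $V_i = \sqrt{t_i}\, \tilde{U}_i$ with $\sum t_i = 1$. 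I will obtain this from the correspondence between matrix extreme points and pure finite-dimensional UCP maps cited as Theorem B of Farenick in the excerpt: the decomposition above expresses the pure UCP map $\phi_X$ corresponding to $X$ as a sum $\phi_X = \sum_i V_i^* \phi_{X^{(i)}}(\cdot) V_i$ of CP maps, so purity forces each summand to equal $t_i \phi_X$, and evaluating at the identity yields $V_i^* V_i = t_i I_n$. With this in hand, setting $Z^{(i)} := U_i^* Y^{(i)} U_i \in L(X)$ (using that $K$ is closed under unitary conjugation) reduces the original decomposition to the classical convex combination $Y = \sum t_i Z^{(i)}$ in $L(X)$. Euclidean extremity of $Y$ in $L(X)$ then forces each $Z^{(i)} = Y$, so $(X^{(i)}, Y^{(i)}) = U_i (X, Y) U_i^* \cong (X, Y)$, and $(X, Y) \in \matex K$.

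The main obstacle is precisely this extraction of scalar weights from matrix extremity. Without it, setting $W_i := U_i^* V_i$ only gives $Y = \sum W_i^* Z^{(i)} W_i$, a matrix convex combination in the fiber whose weights do not obviously simplify, and Euclidean extremity of $Y$ in $L(X)$ is not strong enough to conclude $Z^{(i)} = Y$ at that level of generality. Invoking the UCP/purity correspondence is the cleanest route to the classical scalar convex combination needed for the fiber-level argument to close; once that step is secured, the remainder of the proof reduces to the same Krein--Milman plus fiber trick used in the Euclidean case.
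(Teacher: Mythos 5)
Your proof is correct, but it takes a genuinely different route from the paper's. The paper applies the level-$n$ Krein--Milman theorem of \cite[Theorem 6.8]{Kr} to an arbitrary lift $(X,Y)\in K(n)$: this expresses $(X,Y)$ as a matrix convex combination of matrix extreme points $(X^j,Y^j)$ of $K$ of dimension at most $n$; projecting and invoking matrix extremity of $X$ in $\cP_g K$ forces each $X^j$ to be unitarily equivalent to $X$, so after a unitary conjugation one of the $(X^j,Y^j)$ is already a matrix extreme point of $K$ lying over $X$, and no direct verification of extremity is needed. You instead construct a specific lift --- taking $Y$ Euclidean extreme in the nonempty compact convex fiber $L(X)$ --- and verify its extremity by hand, using the purity characterization of \cite[Theorem B]{FarenickExtreme} to show that in any matrix convex decomposition of $(X,Y)$ the matrices $V_i$ satisfy $V_i^*V_i = t_iI_n$, which collapses the fiber component to a classical convex combination where Euclidean extremity of $Y$ applies. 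Both arguments are sound; the paper's is shorter because the existence of an extreme decomposition is outsourced to Kriel, while yours is more self-contained at level $n$ and shows the stronger statement that \emph{every} Euclidean extreme point of the fiber over a matrix (respectively Euclidean) extreme $X$ produces a matrix (respectively Euclidean) extreme lift. One small point to tidy: the unitaries $U_i$ you extract from matrix extremity of $X$ need not coincide with $\tilde U_i := V_i/\sqrt{t_i}$, and it is the latter that appear when you rewrite $Y=\sum_i V_i^*Y^{(i)}V_i = \sum_i t_i\,\tilde U_i^*Y^{(i)}\tilde U_i$; however, your purity computation already yields $V_i^*X^{(i)}V_i = t_iX$, i.e. $\tilde U_i^*X^{(i)}\tilde U_i = X$, so you should simply take $U_i:=\tilde U_i$ throughout (discarding the unitaries supplied by matrix extremity), after which $Z^{(i)}:=\tilde U_i^*Y^{(i)}\tilde U_i$ lies in $L(X)$ and the argument closes as you describe.
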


\begin{proof}
    Let $X \in \matex \cP_g K(n)$. Then there exists a tuple $Y \in SM_n (\C)^{h-g}$ such that $(X,Y) \in K(n)$. Using \cite[Theorem 6.8]{Kr}, there exist matrix extreme points $(X^j,Y^j) \in K (n_j)$ for $j=1,\dots,\ell$ with $n_j \leq n$ for each $j$ and matrices $V_j \in M_{n_j \times n} (\C)$ such that
    \[
    (X^j,Y^j) = \sum_{j=1}^\ell V_j^* (X^j,Y^j) V_j \qquad \mathrm{and} \qquad \sum_{j=1}^\ell V_j^* V_j = I_n.
    \]
    It immediately follows that $X^j \in \cP_g K(n_j)$ with $n_j \leq n$ for each $j$ and moreover that
    \[
    X^j = \sum_{j=1}^\ell V_j^* X^j V_j \qquad \mathrm{and} \qquad \sum_{j=1}^\ell V_j^* V_j = I_n.
    \]
    The assumption that $X$ is matrix extreme then implies that for each $j$, there is a unitary $U_j$ such that $X = U_j^* X^j U_j$. 

    Since $\matex K$ is closed under unitary conjugation, we then obtain that 
    \[
    U_j^* (X^j,Y^j) U = (X,U_j^* Y^j U_j) \in \matex K.
    \]
    We conclude that $X$ is a projection of a matrix extreme point of $K$. The proof that Euclidean extreme points are projections of Euclidean extreme is similar. 
\end{proof}

On the other hand, free extreme points of $\cP_g K$ need not be projections of free extreme points of $K$.

\begin{example}
Let $K$ be any closed and bounded matrix convex set with no free extreme points (such as in \cite{Eve18} or in \cite[Example 6.30]{Kr}), and examine $\cP_1 K$. There is only one matrix convex set over an interval, and there are free extreme points of $\cP_1 K$. Thus, free extreme points of $\cP_1 K$ cannot be projections of free extreme points of $K$.
\end{example}

\begin{example}
Let $K = \cW^{\max}(\overline{\mathbb{D}})$ be the maximal matrix convex set with first level equal to the ball in two variables. In the language of the previous section, 
\[
    K = \cW(F^{[2]}) = \mconv(F^{[2]}).
\]
Once again, consider $\cP_1 K$, which has free extreme points (namely $\pm 1$) of matrix dimension one. These are not projections of free extreme points of $K$, since the only free extreme point of $K$ is $F^{[2]}$ itself, which has matrix dimension two.

This example shows that adding an assumption that $K$ is the matrix convex hull of its free extreme points is not sufficient to prove that free extreme points are projections of free extreme points. However, note that both $\pm 1$ are compressions of the first entry of $F^{[2]} = \left(\begin{bmatrix} 1 & 0 \\ 0 & -1 \end{bmatrix}, \begin{bmatrix} 0 & 1 \\ 1 & 0 \end{bmatrix}\right)$, and in that first entry alone, the dilation $\begin{bmatrix} 1 & 0 \\ 0 & -1 \end{bmatrix}$ is a trivial dilation (direct sum). This provides context for the next result.
\end{example}

\begin{theorem}\label{theorem:FreeExtremePartialExtension}
Suppose $K$ is the matrix convex hull of its free extreme points (for instance, if $K$ is a free spectrahedron that is closed under complex conjugation), and let $X \in \cP_g K$ be a free extreme point of $\cP_gK$. Then either $X$ is the projection of a free extreme point of $K$, or there exists some $Z \in \cP_g K$ such that $X \oplus Z$ is the projection of a free extreme point of $K$.
\end{theorem}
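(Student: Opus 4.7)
The plan is to lift the given free extreme point $X$ of $\cP_g K$ to a compatible free extreme point of $K$ via the Arveson extreme point equivalence \eqref{eq:arv_equivalence}. Choose any $Y$ with $(X,Y) \in K$. By hypothesis on $K$ and \eqref{eq:arv_equivalence}, $(X,Y)$ dilates to an Arveson extreme point $(\tilde X, \tilde Y) \in K$, which decomposes up to unitary equivalence as $\bigoplus_j (X^{(j)}, Y^{(j)})$ for free extreme points $(X^{(j)}, Y^{(j)})$ of $K$. The projection $\tilde X$ is an element of $\cP_g K$ that dilates $X$; since $X$ is free extreme in $\cP_g K$, this dilation must be trivial. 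Equivalently, there is an isometry $V \colon \CC^n \to \CC^N$ (with $N$ the matrix size of $\tilde X$) such that $V^* \tilde X V = X$ and $\tilde X_i V = V X_i$ for $i = 1, \ldots, g$.

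I would then decompose $V$ along the block structure of the free extreme decomposition. Let $P_j$ be the projection onto the $j$th summand and set $V_j = P_j V$, so that $X^{(j)}_i V_j = V_j X_i$ for $i = 1, \ldots, g$. Applying Schur's lemma (using self-adjointness of the $X_i$ together with irreducibility of $X$) gives $V_j^* V_j = c_j I_n$ for nonnegative scalars $c_j$. Summing with $V^*V = I$ forces $\sum_j c_j = 1$, so some $c_{j_0} > 0$, and $W = V_{j_0}/\sqrt{c_{j_0}}$ is an isometry into the $j_0$th block satisfying $X^{(j_0)}_i W = W X_i$ for $i = 1, \ldots, g$.

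To conclude, I would exploit self-adjointness of the $g$-tuple $X^{(j_0)}$. Invariance of $\ran W$ under each self-adjoint $X^{(j_0)}_i$ upgrades to reducing, so in a suitable orthogonal decomposition $X^{(j_0)} \cong X \oplus Z'$ on the first $g$ coordinates. Compressing $(X^{(j_0)}, Y^{(j_0)}) \in K$ to $(\ran W)^\perp$ yields a point of $K$ whose first $g$ coordinates are $Z'$, so $Z' \in \cP_g K$. Thus the free extreme point $(X^{(j_0)}, Y^{(j_0)})$ of $K$ has projection unitarily equivalent to $X \oplus Z'$. If $W$ is unitary (i.e., $n_{j_0} = n$), we recover the first alternative, that $X$ itself is the projection; otherwise, the second alternative holds with $Z = Z'$.

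The main obstacle is reconciling two unrelated block decompositions of $\tilde X$: the trivial one $\tilde X \cong X \oplus Z$ coming from free extremality of $X$ in $\cP_g K$, versus the finer one $\tilde X = \bigoplus_j X^{(j)}$ inherited from the Arveson extreme decomposition of $(\tilde X, \tilde Y)$ in $K$. A priori these two decompositions need not be compatible; the Schur's lemma step pinpoints a single summand $X^{(j_0)}$ into which the $X$-copy embeds cleanly, and self-adjointness of the projected tuple $X^{(j_0)}$ is what promotes that isometric embedding to a genuine block summand suitable for the stated conclusion.
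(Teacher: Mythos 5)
Your proof is correct. It shares its skeleton with the paper's argument: both dilate $(X,Y)$ to an Arveson extreme point via \eqref{eq:arv_equivalence} and use free extremality of $X$ in $\cP_g K$ to force the dilation in the first $g$ coordinates to be trivial. Where the two diverge is in how they then extract an honestly free extreme (i.e.\ irreducible) point of $K$ lying over $X$ or $X \oplus Z$. The paper restricts $(\hat{X},\hat{Y})$ to a minimal reducing subspace of the form $\FF^n \oplus S_Z$, which requires understanding how reducing subspaces of $(\hat{X},\hat{Y})$ sit relative to the decomposition $\FF^n \oplus \FF^m$. You instead invoke the decomposition of the Arveson point into free extreme summands $(X^{(j)},Y^{(j)})$ and run Schur's lemma on the intertwiners $V_j = P_j V$, using irreducibility of $X$ (which indeed holds, since $X$ is free extreme in $\cP_g K$) to get $V_j^*V_j = c_j I_n$ and thereby pin down a single summand $X^{(j_0)}$ into which $X$ embeds via an intertwining isometry; self-adjointness then upgrades $\ran W$ to a reducing subspace, and matrix convexity gives $Z' \in \cP_g K$ by compressing to $(\ran W)^\perp$. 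What your route buys is that the final point is free extreme by construction rather than by a minimality argument, and it sidesteps any analysis of how reducing subspaces of the dilation interact with the block decomposition $\FF^n \oplus \FF^m$; the cost is the extra bookkeeping with the family $\{V_j\}$. Both arguments use the same inputs (the Arveson dilation and matrix convexity), so neither is more general, but your Schur's lemma step makes the passage from Arveson extreme to free extreme fully explicit.
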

\begin{proof}
Suppose that $K$ is the matrix convex hull of its free extreme points. By (\ref{eq:arv_equivalence}), any point of $K$ dilates to an Arveson extreme point. If $X \in \cP_g K$, then by definition there exists some $Y$ such that $(X,Y) \in K$, hence the tuple $(X,Y)$ dilates to some Arveson extreme point $(\hat{X},\hat{Y}) \in K$. Since this means $X$ admits a dilation to $\hat{X} \in P_g K$, the fact that $X$ is free extreme in $\cP_g K$ means that $\hat{X}$ is either unitarily equivalent to $X$ or to $X \oplus A$ for some $A \in \cP_g K$.

We now address the issue of irreducibility, as $(\hat{X}, \hat{Y})$ is an Arveson extreme point but might not be irreducible. Denote the minimal reducing spaces of $\hat{X}$ as $V_1, \ldots , V_n$, where $V_1$ corresponds to $X$, and the other $V_j$ (if any) correspond to the irreducible summands of $A$. Now, consider the minimal reducing subspaces $W_1, \ldots W_m$ of $(\hat{X}, \hat{Y})$, which form an orthogonal spanning set. Each $W_i$ is a reducing subspace for $\hat{X}$, so each $W_i$ decomposes as a sum of different $V_j$. One of the $W_i$ either equals $V_1$ or includes $V_1$ in its sum. Projecting onto this $W_i$ shows that either a tuple $(X, Q)$ or a tuple $(X \oplus Z, Q)$ is unitarily equivalent to an irreducible summand of $(\hat{X}, \hat{Y})$, which is a free extreme point.
\end{proof}

While free extreme points in a projection do not necessarily extend to free extreme points, Theorem \ref{theorem:FreeExtremePartialExtension} is more than sufficient to extend our results for $\cD_{F^{[3]}}$ to more than three variables.

\begin{corollary}\label{cor:spinwrapup}
Fix $g \geq 3$. Then $\cD_{F^{[g]}}$ has a free extreme point of matrix dimension at least $6$, and is consequently not equal to the minimal matrix convex set over the Euclidean ball.
\end{corollary}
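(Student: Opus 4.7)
The plan is to reduce the general $g \geq 3$ case to the case $g = 3$, which is already handled by Theorem \ref{thm:RealSpinBallFreeExtreme}. The two tools that make this reduction essentially automatic are Corollary \ref{cor:extending_by_zero}, which identifies $\cD_{F^{[3]}}$ with the coordinate projection $\cP_3 \cD_{F^{[g]}}$, and Theorem \ref{theorem:FreeExtremePartialExtension}, which propagates free extreme points in a projection back to free extreme points of the parent set (up to possibly taking a direct sum).

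First, I would fix $g \geq 3$ and invoke Theorem \ref{thm:RealSpinBallFreeExtreme} to obtain a free extreme point $X \in \cD_{F^{[3]}}(6)$. By Corollary \ref{cor:extending_by_zero}, $\cP_3 \cD_{F^{[g]}} = \cD_{F^{[3]}}$ as matrix convex sets, so $X$ is a free extreme point of the projection $\cP_3 \cD_{F^{[g]}}$. Next, I would note that $\cD_{F^{[g]}}$ is a free spectrahedron with real (in fact real orthogonal) coefficients, hence it is closed under complex conjugation; by \cite[Theorem 1.3]{EH} it is the matrix convex hull of its free extreme points, so Theorem \ref{theorem:FreeExtremePartialExtension} applies. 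That theorem yields one of two alternatives: either $X$ itself is the projection of a free extreme point $\hat{X}$ of $\cD_{F^{[g]}}$, or there exists $Z \in \cP_3\cD_{F^{[g]}}$ with $X \oplus Z$ realized as a projection of a free extreme point $\hat{X}$ of $\cD_{F^{[g]}}$.

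In either alternative the matrix dimension of $\hat{X}$ is at least $6$, since the projection of $\hat{X}$ contains $X$ as a direct summand (or equals $X$). This produces the required free extreme point of $\cD_{F^{[g]}}$ of matrix dimension $\geq 6$. To conclude that $\cD_{F^{[g]}}$ is not the minimal matrix convex set over $\overline{\mathbb{B}_g}$, I would appeal to the characterization recalled in the introduction: a minimal matrix convex set is the matrix convex hull of its first level, and consequently all of its free extreme points sit at matricial level $1$. Since $\cD_{F^{[g]}}$ has a free extreme point at level $\geq 6$, it cannot be minimal.

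The only step that requires any care is verifying the hypotheses of Theorem \ref{theorem:FreeExtremePartialExtension}, namely that $\cD_{F^{[g]}}$ is the matrix convex hull of its free extreme points; this is immediate because the universal pairwise anticommuting self-adjoint unitaries $F^{[g]}$ can be represented by real matrices, making $\cD_{F^{[g]}}$ closed under complex conjugation, so \cite[Theorem 1.3]{EH} applies. I do not expect any genuine obstacle beyond this verification — the rest is essentially a direct pull-back of the already-constructed level-$6$ free extreme point of $\cD_{F^{[3]}}$.
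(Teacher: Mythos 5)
Your proposal is correct and follows essentially the same route as the paper: reduce to $g=3$ via Corollary \ref{cor:extending_by_zero}, then lift the level-$6$ free extreme point of $\cD_{F^{[3]}}$ through Theorem \ref{theorem:FreeExtremePartialExtension}. Your explicit verification that $\cD_{F^{[g]}}$ is closed under complex conjugation (since the recursive tensor construction yields real symmetric coefficients), so that \cite[Theorem 1.3]{EH} supplies the spanning hypothesis, is a detail the paper leaves implicit but is exactly the right justification.
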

\begin{proof}
If $g = 3$, simply use Theorem \ref{thm:RealSpinBallFreeExtreme}. If $g > 3$, then by Corollary \ref{cor:extending_by_zero}, $\cP_3 \cD_{F^{[g]}} = \cD_{F^{[3]}}$, where the latter set has a free extreme point $X$ of matrix dimension $6$. Theorem \ref{theorem:FreeExtremePartialExtension} produces a free extreme point of $\cD_{F^{[g]}}$ of the form $E = (X, Y)$ or $E = (X \oplus Z, Y)$.
\end{proof}

Corollary \ref{cor:drop_polyhedron_bounds} illustrates that it is rare for the free polar dual of a real free spectrahedron to be the projection of a real free spectrahedron. However, there are cases where this does occur. To prove this, we use the following lemma. 

\begin{lemma}
    \label{lemma:ProjOfMinimalIsMinimal}
    The projection of a minimal matrix convex set is a minimal matrix convex set. 
\end{lemma}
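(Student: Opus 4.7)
The plan is to directly unwind the definitions of minimality and projection, observing that coordinate projection intertwines cleanly with matrix convex combinations. Throughout, $K \subseteq \bigcup_n SM_n^h$ denotes a minimal matrix convex set, meaning $K = \mconv(K(1))$, and I want to show $\cP_g K = \mconv((\cP_g K)(1))$ for $g \leq h$.

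For the containment $\mconv((\cP_g K)(1)) \subseteq \cP_g K$, I would note that $\cP_g K$ is itself a matrix convex set (projection commutes with direct sums and unital completely positive maps applied coordinatewise). Since its first level is $(\cP_g K)(1)$ by construction, any matrix convex combination of level-$1$ points of $\cP_g K$ lies in $\cP_g K$.

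For the reverse containment, take any $X \in \cP_g K (n)$. By definition there exists $Y \in SM_n^{h-g}$ such that $(X,Y) \in K(n)$. Since $K$ is minimal, I can write
\[
(X,Y) \,=\, \sum_{i=1}^m V_i^* (x^{(i)}, y^{(i)}) V_i,
\]
where each $(x^{(i)}, y^{(i)}) \in K(1) \subseteq \mathbb{R}^h$ is a scalar tuple, each $V_i$ is a $1 \times n$ row vector, and $\sum_{i=1}^m V_i^* V_i = I_n$. Reading off the first $g$ coordinates yields
\[
X \,=\, \sum_{i=1}^m V_i^* x^{(i)} V_i,
\]
and each $x^{(i)} \in (\cP_g K)(1)$. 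Thus $X$ is a matrix convex combination of level-$1$ points of $\cP_g K$, so $X \in \mconv((\cP_g K)(1))$.

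There is no real obstacle here; the only thing to keep track of is that coordinate projection is linear and therefore commutes termwise with the expression $\sum V_i^* (\cdot) V_i$, and that $(\cP_g K)(1)$ is literally the image of $K(1)$ under the coordinate projection on $\mathbb{R}^h$. Combining the two containments gives $\cP_g K = \mconv((\cP_g K)(1))$, which is the statement of the lemma.
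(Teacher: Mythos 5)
Your proof is correct: the paper dismisses this lemma with the single word ``Straightforward,'' and your direct unwinding of the definitions --- lifting $X$ to a witness $(X,Y)\in K$, expanding it as a matrix convex combination of level-one points, and reading off the first $g$ coordinates --- is exactly the intended argument. The only point worth keeping in mind is that for a closed bounded $K$ the minimal set over a compact convex base needs no closure on $\mconv$, so your two containments do give equality as stated.
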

\begin{proof}
    Straightforward.
\end{proof}

\begin{definition}
A free polyhedron is a free spectrahedron $\cD_A$ whose coefficient tuple $A$ is diagonal. Equivalently, a free polyhedron is defined as $\cW^{\mathrm{max}}(K)$, where $K$ is a polyhedron.
\end{definition}

We emphasize that a free polyhedron is maximal over its first level; a free spectrahedron whose first level happens to be a polyhedron is not necessarily a free polyhedron. This terminology choice comes from the fact that a polyhedron is precisely the set of points that satisfy a finite collection of affine linear inequalities, while a free polyhedron is the collection of matrix tuples that satisfy a finite collection of affine linear inequalities. 

\begin{proposition}
\label{prop:PolyDualIsProjOfSimplex}
        Let $\cD_A$ be a bounded free polyhedron. Then $\cD_A^\circ$ is the projection of a real free spectrahedron. Moreover, $\cD_A^\circ$ is the projection of a free simplex. 
\end{proposition}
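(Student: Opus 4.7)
The plan is to realize $\cD_A^\circ$ as the coordinate projection of a free simplex obtained by affinely embedding a standard simplex into $\RR^{g+d-1}$. Since $\cD_A$ is a bounded free polyhedron, we may assume $A_k = \diag((a_1)_k, \ldots, (a_d)_k)$ with $a_i \in \RR^g$, and the commutativity of $A$ identifies $\cW(A)$ with $\cW^{\min}(P)$ for the polytope $P := \conv\{a_1, \ldots, a_d\}$, because UCP maps from $C^*(A) \cong \CC^d$ into $M_n(\CC)$ correspond to positive decompositions of $I_n$. Boundedness of $\cD_A$ gives $0 \in \cW_1(A) = P$, so by the self-polarity results of \cite{DDOSS17} cited earlier, $\cD_A^\circ = \cW(A) = \cW^{\min}(P)$.

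Next I write $P$ as an affine image of a standard simplex. Let $\Delta \subseteq \RR^{d-1}$ be the standard $(d-1)$-simplex with vertices $0, e_1, \ldots, e_{d-1}$, and let $\pi : \RR^{d-1} \to \RR^g$ be the unique affine map sending $0 \mapsto a_d$ and $e_i \mapsto a_i$, so that $\pi(\Delta) = P$. The graph embedding $\iota(t) := (\pi(t), t)$ carries $\Delta$ to a simplex $S \subseteq \RR^{g+d-1}$ with $d$ vertices $(a_d, 0)$ and $(a_i, e_i)$ for $i < d$, and the first-$g$-coordinate projection $\pi_g$ satisfies $\pi_g \circ \iota = \pi$.

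The key step is to show that $\cW^{\max}(S)$ is a free simplex whose projection is $\cD_A^\circ$. As a free spectrahedron, $\cW^{\max}(S)$ is cut out by $T_i \succeq 0$, $I - \sum_i T_i \succeq 0$ coming from $\Delta$, together with the affine constraints $X_k = \pi_k(T)$ enforcing $(X, T) \in \iota(\RR^{d-1})$; these constraints fit into the LMI framework by being split into pairs of inequalities. To confirm that $\cW^{\max}(S) = \cW^{\min}(S)$ — i.e., that it genuinely is a free simplex — I verify directly that any $(X, T) \in \cW^{\max}(S)(n)$ is the matrix convex combination of the vertices of $S$ with weights $Q_0 := I_n - \sum_i T_i$ and $Q_i := T_i$ for $i < d$, all of which are positive and sum to $I_n$; this is a short entrywise check using $X = \pi(T)$, and it applies the corresponding lemma from Lemma \ref{lemma:ProjOfMinimalIsMinimal} in spirit.

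Finally, the coordinate projection $\cP_g\bigl(\cW^{\max}(S)\bigr)$ consists of all $X = \pi(T)$ with $T_i \succeq 0$ and $I - \sum_i T_i \succeq 0$; under the substitution $R_i = T_i$ for $i < d$ and $R_d = I - \sum_i T_i$, this is exactly the description of $\cW^{\min}(P)(n) = \cD_A^\circ(n)$, so $\cP_g\bigl(\cW^{\max}(S)\bigr) = \cD_A^\circ$. The first claim — that $\cD_A^\circ$ is the projection of a real free spectrahedron — follows at once since the constructed $\cW^{\max}(S)$ is a free polyhedron with real diagonal coefficients. The main obstacle, beyond careful index bookkeeping when translating between the $\cW^{\min}(P)$ and simplex-with-weights descriptions, is ensuring that the embedded simplex $S$ (which has codimension $g$ in its ambient space) is still correctly viewed as a simplex so that the $\cW^{\max} = \cW^{\min}$ identification holds; the direct vertex-decomposition argument above sidesteps any appeal to general min-equals-max results for simplices.
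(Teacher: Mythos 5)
Your overall strategy---identify $\cD_A^\circ$ with $\cW^{\min}(P)$ for the polytope $P=\cD_A^\circ(1)$ via the diagonal structure of $A$, realize $P$ as a projection of a simplex, and push minimality through the projection---is the same as the paper's. The genuine gap is in the object you project from. Your simplex is the graph $S=\{(\pi(t),t):t\in\Delta\}\subseteq\RR^{g+d-1}$, which has codimension $g$ and hence empty interior in its ambient space. A free spectrahedron is by definition cut out by a \emph{monic} pencil $I-\Lambda_B(X)\succeq 0$, so its first level always contains an open neighborhood of the origin; consequently $\cW^{\max}(S)$, whose first level is the degenerate set $S$, is not a free spectrahedron at all, and in particular is neither a ``free polyhedron with real diagonal coefficients'' nor a free simplex in the sense of the paper. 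This is visible in your own description of the defining conditions: the affine constraints $X_k=\pi_k(T)$, ``split into pairs of inequalities,'' cannot be absorbed into a single monic linear pencil. Your vertex-decomposition computation does show that $\cW^{\max}(S)$ equals the matrix convex hull of the vertices of $S$ and that its coordinate projection is $\cD_A^\circ$, but that only exhibits $\cD_A^\circ$ as the projection of \emph{some} closed bounded matrix convex set, not of a real free spectrahedron, so the statement as written is not proved.

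The repair is exactly the point the paper dispatches as a ``straightforward exercise'': since $P$ is full-dimensional with $0$ in its interior and has $d\geq g+1$ vertices, one can produce a \emph{full-dimensional} simplex $\Sigma\subseteq\RR^{d-1}$ containing $0$ in its interior whose coordinate projection onto the first $g$ coordinates is $P$ (compose your affine surjection $\pi$ with an invertible affine change of coordinates on $\RR^{d-1}$, so that $\pi$ becomes the coordinate projection, and translate so that $0$ lands in the interior of the resulting simplex). Then $\cW^{\max}(\Sigma)$ is a genuine bounded free simplex---a bounded free spectrahedron with diagonal real coefficients---equal to $\cW^{\min}(\Sigma)$, and either your direct vertex decomposition or Lemma \ref{lemma:ProjOfMinimalIsMinimal} finishes the argument. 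With that substitution your proof becomes essentially the paper's.
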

\begin{proof}
    If $\cD_A$ is a bounded free polyhedron, then $\cD_A^\circ$ is the minimal matrix convex set whose first level is $\cD_A^\circ (1)$, which is a bounded polyhedron. It is well-known that every bounded polyhedron is the projections of a simplex and a straightforward exercise shows that a bounded polyhedron containing $0$ in its interior is the coordinate projection of a simplex containing 0 in its interior. It follows that there exists some bounded free simplex $\cD_B$ such that $\cP_g \cD_B(1) = \cD_A^\circ (1)$. As a free simplex is a minimal matrix convex set, the result now follows from Lemma \ref{lemma:ProjOfMinimalIsMinimal}.
\end{proof}

\begin{remark}
    The projection of a maximal matrix convex set may not be maximal. As free simplices are both minimal and maximal \cite[Theorem 4.7]{FNT}, this is a quick consequence of Lemma \ref{lemma:ProjOfMinimalIsMinimal}. For example, if $K$ is minimal matrix convex set generated by the unit square in two variables, then $K$ is a projection of a free simplex, which is a maximal matrix convex set. However, $K$ is not a maximal matrix convex set. A similar argument shows that the maximal matrix convex set generated by the unit square in two variables cannot be the projection of a free simplex, illustrating a notable difference between the classical and free settings. 
\end{remark}

\subsection{Free spectrahedrops without free extreme points}

We now turn to constructing a complex conjugation closed free spectrahedrop that has no free extreme points. We begin by looking at the extreme points of the matrix convex hull of a finite union of matrix convex sets. 

\begin{theorem}
\label{theorem:UnionClosedAndExtreme}
Let $\{K_k\}_{k=1}^\ell$ be a finite collection of closed bounded matrix convex sets. Then
\[
K:=\mconv\left(\cup_{k=1}^\ell K_k\right)
\]
is a closed bounded matrix convex set. Furthermore, the matrix (free) extreme points of $K$ are contained in the union of the matrix (free) extreme points of the $K_k$. In notation,
\[
\matex K \subseteq \cup_{k=1}^\ell \matex K_k \qquad \mathrm{and} \qquad \free K \subseteq \cup_{k=1}^\ell \free K_k
\]
\end{theorem}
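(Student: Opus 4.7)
The plan is to handle closedness and boundedness of $K$ first, then derive the two extreme-point containments from analogous dilation-theoretic arguments.

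Boundedness is direct: in the self-adjoint setting, any matrix convex combination $\sum_i V_i^* X^{(i)} V_i$ with $\sum_i V_i^* V_i = I$ satisfies $-(\max_i \|X^{(i)}\|)\, I \preceq \sum_i V_i^* X^{(i)} V_i \preceq (\max_i \|X^{(i)}\|)\, I$, so $K$ inherits the uniform bound $\max_k \sup_{K_k} \|\cdot\|$; the non-self-adjoint case reduces by taking hermitian and antihermitian parts. For closedness, the cleanest route is via the identification of closed bounded matrix convex sets with matrix ranges recalled earlier in the paper: write $K_k = \cW(T^{(k)})$ for bounded operator tuples $T^{(k)}$, and verify that $K = \cW\bigl(T^{(1)} \oplus \cdots \oplus T^{(\ell)}\bigr)$. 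The inclusion $K \subseteq \cW(\bigoplus_k T^{(k)})$ is immediate since each $K_k$ sits in $\cW(\bigoplus_k T^{(k)})$ via compression onto the $k$-th summand, and the latter is matrix convex. For the reverse inclusion, any UCP image of $\bigoplus_k T^{(k)}$ can be written, via Stinespring and the decomposition $\pi = \bigoplus_k \pi_k$ of representations of a direct sum, as $\sum_k V_k^* \pi_k(T^{(k)}) V_k$ with $\sum_k V_k^* V_k = I$. After polar-decomposing each $V_k = U_k (V_k^* V_k)^{1/2}$, the summand $V_k^* \pi_k(T^{(k)}) V_k$ takes the form $(V_k^* V_k)^{1/2} Z^{(k)} (V_k^* V_k)^{1/2}$ where $Z^{(k)}$ is an isometric compression of $\pi_k(T^{(k)})$ and hence lies in $K_k$. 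This realizes the UCP image as an honest matrix convex combination of elements of $\bigcup_k K_k$, and closedness of $K$ then follows from closedness of the matrix range.

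For matrix extreme points, suppose $X \in K_n$ is matrix extreme in $K$. By definition of $K$, write $X = \sum_i V_i^* Y^{(i)} V_i$ with $Y^{(i)} \in K_{k_i}$, each $V_i \neq 0$, and each $Y^{(i)}$ of matrix dimension at most $n$ (reducing oversized $Y^{(i)}$ via further isometric compressions inside $K_{k_i}$ if necessary). Matrix extremality in $K$ forces each $Y^{(i)}$ to be unitarily equivalent to $X$, so $X$ is unitarily equivalent to a point of some $K_{k_{i_0}}$. If $X$ were not matrix extreme in $K_{k_{i_0}}$, then a witnessing nontrivial decomposition inside $K_{k_{i_0}} \subseteq K$ would contradict matrix extremality of $X$ in $K$. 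The Euclidean extreme case is identical with classical convex combinations at the fixed level $n$.

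For free extreme points, suppose $X \in K_n$ is free extreme in $K$, hence irreducible. Writing $X = \sum_i V_i^* Y^{(i)} V_i$ as a matrix convex combination with $Y^{(i)} \in K_{k_i}$, stack the $V_i$ into an isometry $V$ so that $X = V^*\bigl(\bigoplus_i Y^{(i)}\bigr) V$, and note that $\bigoplus_i Y^{(i)} \in K$ by closure under direct sums. Extending $V$ to a unitary exhibits $\bigoplus_i Y^{(i)}$, up to unitary equivalence, as a dilation of $X$ in $K$; free extremality forces this dilation to be trivial, so $X$ is unitarily equivalent to an orthogonal direct summand of $\bigoplus_i Y^{(i)}$. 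Irreducibility of $X$ then forces $X$ to be unitarily equivalent to an irreducible summand of some $Y^{(i_0)} \in K_{k_{i_0}}$, and that summand lies in $K_{k_{i_0}}$ since matrix convex sets are closed under restriction to reducing subspaces. Finally, any nontrivial dilation of $X$ inside $K_{k_{i_0}}$ would also be a nontrivial dilation inside $K$, so $X$ must be free extreme in $K_{k_{i_0}}$. The main obstacle I anticipate is the structural identification $K = \cW(\bigoplus_k T^{(k)})$: packaging the Stinespring dilation of a UCP map on a direct sum into an honest matrix convex combination of elements of the individual $K_k$ requires careful use of polar decompositions. Once that structural fact is secured, the extreme-point containments reduce to routine dilation bookkeeping.
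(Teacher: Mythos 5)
Your proof is correct in substance for the stated theorem, but it takes a genuinely different route from the paper's on two of the three components. For closedness, the paper works level by level: it shows every $X \in K(n)$ is an honest matrix convex combination of elements of $\cup_k K_k(n)$ (via the same range-containing-isometry trick you invoke parenthetically when "reducing oversized $Y^{(i)}$") and then cites Hartz--Lupini for the fact that the matrix convex hull of a levelwise closed bounded set is levelwise closed and bounded. You instead identify $K = \cW\bigl(T^{(1)} \oplus \cdots \oplus T^{(\ell)}\bigr)$ via Arveson extension, Stinespring, and the decomposition of unital representations of a finite direct sum of $C^*$-algebras; this is a clean, self-contained alternative that gets closedness for free from the closedness of matrix ranges. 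For the extreme-point containments your argument is actually more economical than the paper's: for matrix extreme points you apply the definition directly to the defining combination (after compressing each $Y^{(i)}$ to size $n$), whereas the paper first invokes a Krein--Milman/Carath\'eodory theorem to rewrite $X$ as a matrix convex combination of matrix extreme points of the $K_k$; for free extreme points the paper routes through matrix extremality, while you argue directly with dilations and the uniqueness of the decomposition of a self-adjoint matrix tuple into irreducible summands. Both routes are valid.

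Two cautions. First, in the Stinespring step the polar decomposition $V_k = U_k (V_k^* V_k)^{1/2}$ only yields a partial isometry $U_k$ when $V_k$ is not injective, so $U_k^* \pi_k(T^{(k)}) U_k$ is not literally a unital (isometric) compression; you should instead extend $U_k$ to an isometry whose range contains the range of $V_k$ -- exactly the device the paper uses at fixed level -- which is a harmless but necessary repair (and one must separately handle the degenerate case where the range of $\pi(e_k)$ has dimension less than $n$). Second, and more substantively, your closing claim that "the Euclidean extreme case is identical" is false: a Euclidean extreme point of $K_n$ is only protected against classical convex combinations, not against matrix convex combinations $\sum_i V_i^* Y^{(i)} V_i$ with non-isometric $V_i$, and the remark immediately following this theorem in the paper exhibits a Euclidean extreme point of $\mconv(K_1 \cup K_2 \cup K_3)$ lying in none of the $K_j$. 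The theorem as stated makes no claim about Euclidean extreme points, so this does not invalidate your proof of the statement, but the sentence should be deleted.
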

\begin{proof}
    We first show that for fixed $n \in \mathbb{N}$, one has
    \[
    K(n) =  \Big(\mconv\left(\cup_{k=1}^\ell K_k(n)\right)\Big)(n)
    \]
    To this end, let $X \in K(n)$. Then by definition, there exist a finite collection of tuples $\{X^j\}_{j=1}^m \subset \cup_{k=1}^\ell K_k$ and matrices $V_j \in M_{n_j \times n} (\CC)$ such that
    \[
    X= \sum_{j=1}^m V_j^* X^j V_j \qquad \mathrm{and} \qquad \sum_{j=1}^m V_j^* V_j = I.
    \]
    Now, for each $j$, let $W_j \in M_{n_j \times n}$ be an isometry range containing the range of $V_j$ so that $W_j W_j^* V_j = V_j$. Then $W_j^* X^j W_j \in \cup_{k=1}^\ell K_k(n)$ for each $j$, and 
    \[
    X= \sum_{j=1}^m V_j^* W_j (W_j^*X^jW_j) W_j^* V_j \qquad \mathrm{and} \qquad \sum_{j=1}^m V_j^*W_j W_j^* V_j = I,
    \]
    so $X$ is a matrix convex combination of elements of $\cup_{k=1}^\ell K_k(n)$, as claimed.

    The fact that $K$ is closed and bounded is now an immediate consequence of \cite[Corollary 2.5]{HL}. More precisely, \cite[Corollary 2.5]{HL} shows that $\mconv\left(\cup_{k=1}^\ell K_k(n)\right)$ is level-wise closed and bounded, hence $K(n)$ is closed and bounded. 

    We next consider the matrix extreme points of $K$. To this end, suppose 
    \[
    X \in K \backslash \left(\cup_{k=1}^K \matex K_k\right).
    \]
    Then, using the above together with \cite[Theorem 6.8]{Kr}, $X$ can be expressed as matrix convex combination of elements of $\cup_{k=1}^\ell \matex K_k(n)$. However, since $X$ is not a matrix extreme point of any of the $K_k$, this expresses $X$ as a matrix convex combination of elements of $K(n)$ that $X$ is not unitarily equivalent to. Thus, $X$ is not matrix extreme in $K(n)$.

    Finally, suppose that $X$ is free extreme in $K$. Then $X$ is also matrix extreme in $K$, so from the above it is matrix extreme in $K_k$ for some $k$. If $X$ is not free extreme in $K_k$, then it nontrivially dilates in $K_k$, hence it nontrivially dilates in $K$, which would contradict the assumption that $X$ is free extreme in $K$. We conclude that $X$ is free extreme in $K_k$. 
\end{proof}

\begin{remark}
The Euclidean extreme points of a union of free spectrahedra need not be Euclidean extreme points of sets in the union. A simple example is obtained by letting $K_1,K_2,K_3 \subset SM(\C)^2$ be the free intervals
\[
\begin{split}
    K_1 & = \mconv (\{(-2,1),(1,1)\}) \\
    K_2 & = \mconv (\{(1,1),(1,-2)\})  \\
    K_3 & = \mconv (\{(-2,1),(1,-2)\}). \\
\end{split}
\]
It is straightforward to verify that $\mconv (K_1 \cup K_2 \cup K_3)$ is the free simplex with extreme points $(-2,1),(1,1)$, and $(1,-2)$, which has defining tuple 
\[
A = \big(\mathrm{diag}(1,0,-1),\mathrm{diag}(0,1,-1) \big).
\]
By checking the conditions of \cite[Corollary 2.3 (2)]{EHKM}, one can show that the tuple 
\[
 X = \left(\begin{bmatrix} 1 & 0 \\ 0 & 0 \end{bmatrix},
    \begin{bmatrix}
        \frac{1}{2} & \sqrt{\frac{5}{6}} \\ 
        \sqrt{\frac{5}{6}} & -\frac{2}{3}
    \end{bmatrix}\right)
\]
is a Euclidean extreme point of $\cD_A$. On the other hand, since $\cD_A$ is the matrix convex hull of its first level, $X$ cannot be matrix extreme. Furthermore, $X \notin K_1 \cup K_2 \cup K_3$. To see this, observe that if $X \in K_j$ for some $j$, then $\mconv(X) \subset K_j.$
However,
\[
(0,-2/3) \in \mconv(X) \qquad\mathrm{and}\qquad (0,-2/3) \notin K_j \ \mathrm{for\ any\ } j=1,2,3.
\]

The example can easily be modified to use free spectrahedra instead of general matrix convex sets. For example, let
\[
\begin{split}
    K_1' & = \mconv (\{(-2,1),(1,1),(1/10,1/10)\}) \\
    K_2' & = \mconv (\{(1,1),(1,-2),(-1/10,0)\})  \\
    K_3' & = \mconv (\{(-2,1),(1,-2),(0,-1/10)\}). \\
\end{split}
\]
Then $K_1',K_2',$ and $K_3'$ are free simplices containing $0$ in their interior, hence are free spectrahedra. With this modification, we still have $\mconv(\cup_{j=1}^3 K_j) = \cD_A$. One can then verify that $X \notin K_1 \cup K_2 \cup K_3$ either by evaluating $X$ in defining pencils for the $K_j'$ or by showing that $\mconv(X)(1) \not\subseteq K_j'(1)$ for each $j$. 
\end{remark}

\begin{corollary}
\label{corollary:SpectrahedropWithNoFreeExtreme}
    There exist closed bounded free spectrahedrops that are closed under complex conjugation and have no free extreme points. 
\end{corollary}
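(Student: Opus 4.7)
The plan is to realize the desired free spectrahedrop as the matrix convex hull of a closed bounded complex free spectrahedron $L$ that has no free extreme points, together with its entrywise complex conjugate $\overline{L}$. Such an $L$ is provided by \cite[Example 6.30]{Kr}, which is a closed bounded complex free spectrahedron (not closed under complex conjugation) with no free extreme points, as discussed in the lead-in to Theorem \ref{theorem:UnionClosedAndExtreme}. If $L = \cD_A$ for a complex tuple $A$, then $\overline{L} = \cD_{\overline{A}}$ is defined by the entrywise conjugate pencil, and entrywise conjugation is a bijection $L \leftrightarrow \overline{L}$ that preserves direct sums, unitary equivalence, and nontrivial dilations, so $\overline{L}$ is also a closed bounded complex free spectrahedron with no free extreme points.

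Set $K := \mconv(L \cup \overline{L})$. Since $L \cup \overline{L}$ is closed under complex conjugation, so is $K$. By Theorem \ref{theorem:UnionClosedAndExtreme}, $K$ is closed and bounded, and
\[
\free K \,\, \subseteq \,\, \free L \, \cup \, \free \overline{L} \,\, = \,\, \emptyset.
\]
Hence $K$ meets all desired properties except that it remains to show $K$ is a free spectrahedrop.

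To verify this, note that the defining pencils of $L$ and $\overline{L}$ evaluate to the identity at the origin, so both sets contain $0$ in their interior. It follows that $L^\circ$ and $\overline{L}^\circ$ are complex free spectrahedrops, and their intersection $L^\circ \cap \overline{L}^\circ$ is a complex free spectrahedrop (concatenate the auxiliary variables from each projection description). A direct computation gives $K^\circ = (L \cup \overline{L})^\circ = L^\circ \cap \overline{L}^\circ$, and since $K$ is bounded, $K^\circ$ contains $0$ in its interior. Applying \cite[Corollary 4.17]{HKM17} to the complex free spectrahedrop $K^\circ$ with nonempty interior, $K^{\circ\circ}$ is a complex free spectrahedrop, and the bipolar theorem for closed matrix convex sets containing $0$ in the interior gives $K = K^{\circ\circ}$.

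The main obstacle is securing the seed example: a closed bounded complex free spectrahedron $L$ (not merely a matrix convex set) with no free extreme points. Once this is in hand via the citation above, the remainder of the plan is a clean assembly of closure properties for matrix convex hulls, polar duals, and intersections, packaged by Theorem \ref{theorem:UnionClosedAndExtreme} and \cite[Corollary 4.17]{HKM17}.
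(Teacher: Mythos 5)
Your proposal is correct and follows essentially the same route as the paper: the same seed example from \cite[Example 6.30]{Kr}, the same construction $K = \mconv(\cD_A \cup \cD_{\overline{A}})$, and the same appeal to Theorem \ref{theorem:UnionClosedAndExtreme} for closedness, boundedness, and the absence of free extreme points. The only difference is cosmetic: where the paper cites \cite[Proposition 4.18]{HKM17} to conclude that $K$ is a free spectrahedrop, you re-derive that fact from \cite[Corollary 4.17]{HKM17} via $K^\circ = \cD_A^\circ \cap \cD_{\overline{A}}^\circ$ and the bipolar theorem, which is a valid (and essentially the standard) argument for the same conclusion.
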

\begin{proof}
    As seen in \cite[Example 6.30]{Kr} and \cite[Remark 2.5 and Corollary 2.11]{Pas22}, there exist closed bounded complex free spectrahedra (which are not closed under complex conjugation) that have no free extreme points. Let $\cD_A$ be such a free spectrahedron, and note that $\overline{\cD_A}= \cD_{\overline{A}}$ is also a closed bounded free spectrahedron with no free extreme points. 
    
    Now, set $K = \mconv(\cD_A \cup \overline{\cD_A})$. Then from Theorem \ref{theorem:UnionClosedAndExtreme}, $K$ is a closed bounded matrix convex set. Moreover, since $\cD_A$ and $\overline{\cD_A}$ have no free extreme points, $K$ cannot have free extreme points. On the other hand, it is straightforward to check that $K$ is closed under complex conjugation. Finally, as a consequence of \cite[Proposition 4.18]{HKM17}, $K$ is a free spectrahedrop. 
\end{proof}

    As a consequence of Proposition \ref{prop:RealDropEucAreFreeLevel1}, if $K$ is a bounded free spectrahedrop that has no free extreme points, then $K$ cannot be the projection of a bounded free spectrahedron that is closed under complex conjugation. This leads to the following question. 

\begin{question}
    Let $\cD_A$ be a bounded free spectrahedron that is closed under complex conjugation. Is a projection of $\cD_A$ the matrix convex hull of its free extreme points? More generally, suppose $K$ is a closed bounded matrix convex set that is the matrix convex hull of its free extreme points. Are projections of $K$ the matrix convex hull of their free extreme points?
\end{question}

\subsection{Free spectrahedra whose dual are free spectrahedra}

As a generalization of Theorem \ref{thm:Paulidual}, we now present a class of complex free spectrahedra whose free polar duals are free spectrahedra.

\begin{theorem}
\label{theorem:DualFreeSpecs}
Fix $d \in \mathbb{N}$ and let $A \in SM_{d} (\CC)^{d^2-1}$ be chosen so that $\cD_A$ is a bounded free spectrahedron. Then there exists a tuple $B \in SM_{d} (\CC)^{d^2-1}$ such that $\cD_A^\circ = \cD_B$. As a consequence, $\cD_A = \mconv(B)$. Thus, up to unitary equivalence, $B$ is the only free extreme point of $\cD_A$. Moreover, $\cD_A$ is the matrix convex hull of its free extreme points.  
\end{theorem}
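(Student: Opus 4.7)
The plan is to realize $\cW(A) = \cD_A^\circ$ as a free spectrahedron directly by invoking Choi's theorem, which applies because the boundedness of $\cD_A$ forces $\{I_d, A_1, \ldots, A_{d^2-1}\}$ to be a complex basis for $M_d$. Indeed, boundedness requires the $A_k$ to be real-linearly independent and, more strongly, that no nontrivial real combination $\sum s_k A_k$ is semidefinite; since $SM_d(\CC)$ has real dimension $d^2$, the $d^2$ self-adjoint elements $I_d, A_1, \ldots, A_{d^2-1}$ form a real basis of $SM_d(\CC)$ and hence a complex basis of $M_d(\CC)$.

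Expanding each matrix unit as $E_{ij} = c_{ij,0} I_d + \sum_k c_{ij,k} A_k$ and noting that $E_{ji} = E_{ij}^*$ forces $c_{ji,k} = \overline{c_{ij,k}}$, the Choi characterization of a UCP map $\phi_X: M_d \to M_n$ with $\phi_X(I) = I$ and $\phi_X(A_k) = X_k$ translates the condition $X \in \cW(A)$ to positivity of
\[
C_{\phi_X} \,=\, \sum_{i,j} E_{ij} \otimes \phi_X(E_{ij}) \,=\, M_0 \otimes I \,+\, \sum_{k=1}^{d^2-1} M_k \otimes X_k,
\]
where the matrices $M_0 := \sum_{i,j} c_{ij,0} E_{ij}$ and $M_k := \sum_{i,j} c_{ij,k} E_{ij}$ are self-adjoint. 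A dual-basis identification with respect to the trace pairing shows $M_0 = \sigma^T$, where $\sigma \in M_d$ is the unique element satisfying $\Tr(\sigma) = 1$ and $\Tr(\sigma A_k) = 0$ for every $k$.

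The decisive step will be to argue $M_0 \succ 0$. If $\sigma$ failed to be positive definite, some rank-one projection $P = vv^*$ would expand in the basis as $P = c_0 I + \sum_k s_k A_k$ with $c_0 = \Tr(\sigma P) \leq 0$, so that $\sum_k s_k A_k = P - c_0 I$ is a nonzero positive semidefinite combination of the $A_k$, contradicting the boundedness condition. With $M_0$ positive definite, conjugating $C_{\phi_X}$ by $M_0^{-1/2} \otimes I$ yields $I + \sum_k (M_0^{-1/2} M_k M_0^{-1/2}) \otimes X_k$, so setting $B_k := -M_0^{-1/2} M_k M_0^{-1/2}$ gives a self-adjoint tuple with $\cW(A) = \cD_B$. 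Because $\sigma$ is now a faithful state witnessing $0 \in \cW_1(A)$, the identity $\cD_A^\circ = \cW(A)$ recalled in the preliminaries upgrades this to $\cD_A^\circ = \cD_B$.

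For the remaining claims, I would apply polar duality: $\cD_A = \cD_A^{\circ\circ} = \cD_B^\circ$. The tuple $B$ again consists of $d^2 - 1$ self-adjoint elements of $M_d$ whose free spectrahedron is bounded (as the polar dual of a set with $0$ in its interior), so the same argument produces $0 \in \cW_1(B)$ and hence $\cD_B^\circ = \cW(B) = \mconv(B)$, giving $\cD_A = \mconv(B)$. Irreducibility of $B$ follows from a dimension count: a block-diagonal decomposition $B \cong B' \oplus B''$ with blocks of sizes $d', d'' \geq 1$ would confine the span of $\{I, B_1, \ldots, B_{d^2-1}\}$ to a space of complex dimension $(d')^2 + (d'')^2 < d^2$, contradicting the linear independence just established. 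Standard matrix extreme point arguments, using the dilation-theoretic characterization of free extreme points recalled in Section~2, then show that $B$ is free extreme in $\mconv(B)$ and, up to unitary equivalence, the unique free extreme point of $\cD_A$. The main obstacle is verifying $M_0 \succ 0$, which is where the boundedness hypothesis must be converted into faithfulness of a specific state on $M_d$.
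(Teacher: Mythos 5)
Your proof is correct, but it takes a genuinely different route from the paper's. The paper's proof is short and top-down: it invokes \cite[Proposition 4.14]{HKM17} to realize $\cD_A^\circ$ as the projection of a bounded free spectrahedron $\cD_B$ with $B \in SM_d(\CC)^h$, and then uses boundedness of $\cD_B$ to force $h \leq d^2-1$ while the projection onto $d^2-1$ coordinates forces $h \geq d^2-1$, so the projection is trivial and $\cD_A^\circ = \cD_B$. Your argument is instead a self-contained, bottom-up construction: you observe that boundedness makes $\{I, A_1, \ldots, A_{d^2-1}\}$ a basis of $SM_d(\CC)$, expand the matrix units in this basis, and apply Choi's theorem \cite{Choi} to rewrite the Choi matrix of the unique unital self-adjoint map $A \mapsto X$ as $M_0 \otimes I + \sum_k M_k \otimes X_k$, with the key point being that the dual-basis element $\sigma = M_0^T$ is forced to be positive definite by the no-nontrivial-PSD-combination consequence of boundedness; conjugating by $M_0^{-1/2} \otimes I$ then yields an explicit $B$. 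This is in effect a generalization of the paper's proof of Theorem \ref{thm:Paulidual} (which does exactly this for the Pauli basis, where $M_0 = \frac{1}{2}I$ trivially) to an arbitrary self-adjoint basis of $M_d$, and it is essentially the mechanism hidden inside the cited HKM construction. What your approach buys is an explicit formula for $B$ and a transparent verification that $0 \in \cW_1(A)$ (via the faithful state $\sigma$), which the paper leaves implicit in its appeal to \cite{HKM17} and \cite{DDOSS17}; what the paper's approach buys is brevity and a reusable dimension-counting template. Your closing steps (bipolar identity, irreducibility of $B$ by the dimension count on $(d')^2 + (d'')^2 < d^2$, and the identification of the free extreme points) match the paper's and are treated at the same level of detail there, so no gap arises on that front.
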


\begin{proof}
 The assumption that $\cD_A$ is bounded implies that $\{I,A_1,\dots,A_{d^2-1}\}$ is linearly independent over $\RR$. Since the dimension of $SM_{d}(\CC)$ as a real vector space is $d^2$, it follows that  $\{I,A_1,\dots,A_{d^2-1}\}$ is a basis for $SM_{d} (\CC)$. As a consequence, the tuple $A$ must be irreducible over $\CC$. 
 
 Now, from \cite[Proposition 4.14]{HKM17}, $\cD_A^\circ$ is the projection of a bounded free spectrahedron $\cD_B$ for some $B \in SM_{d'}(\CC)^h$. Moreover, following the construction presented in \cite[Proposition 4.14]{HKM17}, shows that $d'=d$, so that $B$ is also a tuple of $d \times d$ matrices. Now, since $\cD_B$ is bounded, $\{I,B_1,\dots,B_{h}\}$ is linearly independent over $\RR$, hence $h \leq d^2-1$. On the other hand, the fact that $\cD_A^\circ \subset SM(\CC)^{d^2-1}$ is a projection of $\cD_B$ implies $h \geq d^2-1$. Thus, $h = d^2-1$. We conclude that $\cD_A^\circ = \cD_B$, which then implies that
 \[
\cD_A = (\cD_A^\circ)^\circ = \cD_B^\circ = \mconv(B).
 \]
 Arguing as before shows that $B$ is irreducible, so the above implies that, up to unitary equivalence, $B$ is the only free extreme point of $\cD_A$ and that $\cD_A$ is the matrix convex hull of its free extreme points. 
\end{proof}

\begin{remark}
 The above theorem gives an alternative proof strategy for Theorem \ref{thm:Paulidual}. In particular, it is sufficient to show that $P$ is a free extreme point of $\cD_P$. As previously discussed, this can be verified by solving a linear system of equations. 
\end{remark}

While Theorem \ref{theorem:DualFreeSpecs} gives a partial generalization of Theorem \ref{thm:Paulidual}, it is notable that if $d > 2$, then none of the free spectrahedra considered above can be self-dual. 

\begin{proposition}
    Suppose $d \geq 3$ and $d^2-d+2 \leq g \leq d^2-1$ and let $A \in SM_d(\CC)^{g}$. Also assume that $\cD_A$ is bounded. Then $\cD_A(1)$ is not self-dual. As a consequence, $\cD_A$ is not self-dual.
\end{proposition}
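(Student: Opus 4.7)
The plan is to argue by contradiction: assume $\cD_A(1) = \cD_A(1)^\circ$ in the classical polar duality sense in $\RR^g$, and derive a dimension clash. Write $\pi \colon SM_d(\CC) \to \RR^g$ for the linear map $\pi(M) := (\Tr(MA_i))_{i=1}^g$, so that the joint numerical range satisfies $\cW_1(A) = \pi(\{\rho \succeq 0 : \Tr(\rho) = 1\})$, and set $T(M) := \sum_i \Tr(MA_i)\, A_i$.

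The first step is to show that self-duality forces $\cD_A(1) = \overline{\mathbb{B}_g}$, the closed Euclidean unit ball. Since $\cW_1(A) \subseteq \cD_A(1)^\circ$ always holds, the hypothesis gives $\cW_1(A) \subseteq \cD_A(1)$, equivalently $I \succeq T(uu^*)$ for every unit $u \in \CC^d$. Pairing this inequality with $uu^*$ via trace yields
\[ \sum_{i=1}^g \langle u, A_i u\rangle^2 \,=\, \Tr(uu^*\, T(uu^*)) \,\leq\, \Tr(uu^*) \,=\, 1, \]
so $\pi(uu^*) \in \overline{\mathbb{B}_g}$ and hence $\cW_1(A) \subseteq \overline{\mathbb{B}_g}$. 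A standard SDP-duality computation (Slater's condition holds at $x = 0$) gives $\cD_A(1)^\circ = \{\pi(\Lambda) : \Lambda \succeq 0,\, \Tr(\Lambda) \leq 1\} = \conv(\{0\} \cup \cW_1(A))$, so $\cD_A(1) = \cD_A(1)^\circ \subseteq \overline{\mathbb{B}_g}$. Taking classical polars and using the self-duality of the Euclidean ball reverses the inclusion, yielding $\cD_A(1) = \overline{\mathbb{B}_g}$.

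The second step is a dimension count. Extreme points of $\cD_A(1)^\circ = \conv(\{0\} \cup \cW_1(A))$ must come from $\{0\} \cup \pi(\mathrm{PS})$, where $\mathrm{PS} = \{uu^* : \|u\| = 1\}$; the origin is excluded because it lies in the interior of $\cD_A(1)^\circ$. Since $\mathrm{PS}$ has real dimension $2(d-1)$ and $\pi$ is continuous, $\dim \pi(\mathrm{PS}) \leq 2d - 2$. But the extreme points of $\overline{\mathbb{B}_g} = \cD_A(1)^\circ$ fill the unit sphere $S^{g-1}$, which has dimension $g - 1$; so $S^{g-1} \subseteq \pi(\mathrm{PS})$ forces $g \leq 2d - 1$. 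This contradicts the hypothesis, since $d^2 - d + 2 > 2d - 1$ is equivalent to $d^2 - 3d + 3 > 0$, which holds for every real $d$. The final assertion that $\cD_A$ itself is not self-dual is then immediate: $\cD_A = \cD_A^\circ$ would give $\cD_A(1) = \cD_A^\circ(1) = \cD_A(1)^\circ$ classically (the level-$1$ free polar coincides with the classical polar, since higher-level pencil conditions are implied by the level-$1$ numerical-range bound), contradicting the main claim. The main care point in a full write-up is the SDP identification of $\cD_A(1)^\circ$ together with the fact that extreme points of a linear image of a convex body arise as images of extreme points of the source.
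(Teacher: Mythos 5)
Your argument is correct, and it takes a genuinely different route from the paper. The paper also first reduces to $\cD_A(1)=\overline{\mathbb{B}}_g$ (this step can be done in one line: $x\in C=C^\circ$ gives $\langle x,x\rangle\le 1$, so $C\subseteq\overline{\mathbb{B}}_g$, and polarity reverses the inclusion), but then argues via a pigeonhole on coefficients: since $g\ge d^2-d+2$, the real span of the $A_i$ meets the diagonal matrices in dimension at least $2$, so after an orthogonal change of variables a coordinate section of $\cD_A(1)$ is simultaneously a Euclidean ball of dimension at least $2$ and a bounded polyhedron (being $\cD_{A'}(1)$ for a diagonal tuple $A'$), a contradiction. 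Your second step instead counts dimensions of extreme points: the extreme points of $\overline{\mathbb{B}}_g=\cD_A(1)^\circ=\conv(\{0\}\cup\cW_1(A))$ fill $S^{g-1}$ yet must lie in the image of the rank-one projections, a manifold of real dimension $2d-2$, forcing $g\le 2d-1$. This is not only valid but strictly stronger: it rules out self-duality for every $g\ge 2d$, not just $g\ge d^2-d+2$, which narrows the open range in Remark \ref{rem:nobigdual} to $g\le 2d-1$ and is sharp against the Pauli example ($d=2$, $g=3$). Two small points to tighten in a full write-up: first, ``$\pi$ is continuous'' does not by itself bound the dimension of the image (space-filling curves); you should say that $\pi$ is linear, hence Lipschitz on the compact set of pure states, so it cannot raise Hausdorff dimension, and $S^{g-1}$ has Hausdorff dimension $g-1$. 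Second, your first step can be shortened to the one-line polarity argument above, bypassing the SDP-duality identification of $\cD_A(1)^\circ$ for that step (though you still need that identification, or at least the inclusion $\cD_A(1)^\circ\subseteq\conv(\{0\}\cup\cW_1(A))$, for the extreme-point count).
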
  
\begin{proof}
     Since $\cD_A$ is bounded, the set $\{A_1,\dots,A_{g}\}$ is linearly independent over $\RR$, hence has real dimension $g$. On the other hand, the real dimension of $SM_d(\C)$ is $d^2$, and the dimension of the set of diagonal matrices in $SM_d (\CC)$ is $d$. Thus, the dimension of the real span of $\{A_1,\dots,A_{d^2-1}\}$ intersected with the set of diagonal matrices in $SM_d(\CC)$ is at least $2$. Additionally, since $\cD_A$ is bounded, the span of the $A_j$ cannot contain the identity matrix, so the dimension of the intersection in question cannot exceed $d-1$. Let $2 \leq k \leq d-1$ denote the dimension of this intersection. It follows that, up to an orthogonal linear transformation of the $A_j$ (see  (\ref{eq:ortho_example})), the matrices $A_1,\dots,A_{k}$ are linearly independent and diagonal. 
    
    Now, suppose towards a contradiction that $\cD_A(1)$ is self-dual. Then, even after the above orthogonal linear transformation, we must have that $\cD_A (1) = \overline{\mathbb{B}}_{d^2-1}$. It follows that $\cP_{k} \cD_A(1) = \overline{\mathbb{B}}_{k}$. On the other hand, since $\cD_A(1)$ is a Euclidean ball, 
    \[
    \cP_{k} \cD_A(1) = \{x \in \RR^{d-1} : (x_1,\dots,x_{k},0,\dots,0) \in \cD_A(1)\} = \cD_{A'} (1)
    \]
    where $A' = (A_1,\dots,A_{k})$. However, $\{A_1,\dots,A_{k}\}$ is a collection of linearly independent diagonal matrices and $\cD_{A'} (1)$ is bounded. It follows that $\cD_{A'} (1)$ is a polyhedron, so $\cD_{A'}(1) \neq \overline{\mathbb{B}}_{k}$, which is a contradiction. 
\end{proof}

\begin{remark}\label{rem:nobigdual}
    Taking $g=d^2-1$, the above proposition is intended to highlight that Theorem \ref{theorem:DualFreeSpecs} cannot be used to extend Theorem \ref{thm:Paulidual} in the natural way to $d^2-1$ variables when $d \geq 3$. This is not surprising, as it is known that the state space of $M_n$ for $n > 2$ is not a ball (see \cite[\S 1]{Avron} for a gentle introduction), in contrast with the Bloch ball of \cite{Bloch} when $n = 2$. Our result thus places more precise dimension bounds on a self-dual set. It remains an open question if there exists $A \in SM_d (\CC)^g$ such that $\cD_A$ is self-dual for some $d \geq 3$ and $g < d^2-d+2$.  
\end{remark}

\section*{Acknowledgments}
The views expressed in this document are those of the authors and do not reflect the official policy or position of the U.S. Naval Academy, the Department of the Navy, the Department of Defense, or the U.S. Government.

\vskip .1 in

We would also like to thank Michael Hartz for helpful comments.

 \Addresses


\begin{thebibliography}{99}

\bibitem{Ando} T. Ando, \textit{Structure of operators with numerical radius one}, Acta Sci. Math. (Szeged) 34 (1973), 11--15.
        

\bibitem{Arv08} W. Arveson, \textit{The noncommutative Choquet boundary}, J. Amer. Math. Soc. 21 (2008), 1065--1084.

\bibitem{AP03} W.~Arveson, G.~Price, \textit{The structure of spin systems}, Internat. J. Math. 14:2 (2003), 119--137.

\bibitem{ADMPR} G. Aubrun, K. R. Davidson, A. M{\"u}ller-Hermes, V. I. Paulsen, M. Rahaman, \textit{Completely bounded norms of k-positive maps},
J. Lond. Math. Soc. (2) 109:6 (2024), Paper No. e12936, 21 pp.

\bibitem{Avron} J.~Avron, O.~Kenneth, \textit{An elementary introduction to the geometry of quantum states with pictures}, Rev. Math. Phys 32 (2020), 38 pp. 
		



\bibitem{Bloch} F.~Bloch, \textit{Nuclear Induction}, Phys. Rev. 70:7-8 (1946), 460-474.

\bibitem{BM_mcs} D. P. Blecher, C. McClure. {\it Real non-commutative convexity I}, preprint, arXiv:2506.13512v2. 

\bibitem{BR_op} D. P. Blecher, T. Russell. {\it Real operator systems}, preprint, arXiv:2502.12126v2.






\bibitem{ChoiEffros76} M.D. Choi, E. Effros, \textit{The completely positive lifting problem for C*-algebras}, Ann. of Math. 104 (1976), 585-609.

\bibitem{Choi} M.D.~Choi, \textit{Completely Positive Linear Maps on Complex Matrices}, Linear Algebra Appl. 10 (1975), 285--290.


		
\bibitem{DDOSS17} K.R. Davidson, A. Dor-On, O. M. Shalit, B. Solel, {\it Dilations, inclusions of matrix convex sets, and completely positive maps}, Int. Math. Res. Not. IMRN (2017), 4069--4130.
		
\bibitem{DK} K. R. Davidson, M. Kennedy, {\it Noncommutative Choquet theory}, preprint, arXiv:1905.08436v4. To appear in Memoirs of the American Mathematical Society.
 
\bibitem{DK15} K.R. Davidson, M. Kennedy, \textit{The Choquet boundary of an operator system}, Duke Math. J. 164:15 (2015), 2989--3004.

\bibitem{DP22} K.R. Davidson, B. Passer, \textit{Strongly peaking representations and compressions of operator systems}, Int. Math. Res. Not. IMRN 7 (2022), 5037--5070.

		
		


\bibitem{Eve18} E.~Evert, \textit{Matrix convex sets without absolute extreme points}, Linear Algebra Appl. 537 (2018), 287--301.

\bibitem{Eve23} E.~Evert, \textit{Free extreme points span generalized free spectrahedra given by compact coefficients}, J.~Math.~Anal.~Appl.~545 (2025) 129170, 21 pp.

\bibitem{EEdO+} E.~Evert, A.~Epperly, M.~de~Oliveira, J.~Yin, J.W.~Helton, NCSE 3.1.2: An NCAlgebra package for optimization over free spectrahedra, July 2025. Installation instructions available at \url{https://github.com/NCAlgebra/UserNCNotebooks/tree/master/NCSpectrahedronExtreme}

\bibitem{EH} E.~Evert, J.~W.~Helton, \textit{Arveson extreme points span free spectrahedra},  Math.~Ann.~375 (2019), 629--653. 

\bibitem{EHKM} E.~Evert, J.~W.~Helton, I.~Klep, S.~McCullough, \textit{Extreme points of matrix convex sets, free spectrahedra and dilation theory}, J.~Geom.~Anal.~28 (2018), 1373--1408.

\bibitem{EPS_expo} E.~Evert, B.~Passer, T.~{\v{S}}trekelj, \textit{Extreme points of matrix convex sets and their spanning properties}. (2024) In: Alpay, D., Sabadini, I., Colombo, F. (eds) Operator Theory. Springer, Basel. 

\bibitem{FarenickSpinOriginal} D. Farenick, F. Huntinghawk, A. Masanika, S. Plosker, \textit{Complete order equivalence of spin unitaries}, Linear Algebra Appl. 610 (2021), 1--28.

\bibitem{FarenickSpinCorrected} D. Farenick, F. Huntinghawk, A. Masanika, S. Plosker, \textit{Corrigendum to Complete order equivalence of spin unitaries}, Linear Algebra Appl. 635 (2022), 201--202.

\bibitem{FarenickExtreme} D.~Farenick, \textit{Extremal matrix states on operator systems}, J. Lond. Math. Soc. 61 (2000), 885--892.
		


		
\bibitem{FNT} T.~Fritz, T.~Netzer, A.~Thom, \textit{Spectrahedral containment and operator systems with finite-dimensional realization}, SIAM J. Appl. Algebra Geometry 1 (2017), 556--574.



\bibitem{HL} M.~Hartz, M.~Lupini, \textit{Dilation theory in finite dimensions and matrix convexity}, Isr.~J.~Math. 245 (2021), 39--73.


  
\bibitem{HKM13} J.W. Helton, I. Klep, S. McCullough, \textit{The matricial relaxation of a linear matrix inequality}, Math. Program. 138 (2013), 401--445.

\bibitem{HKM17} J.W. Helton, I. Klep, S. McCullough, \textit{The tracial Hahn-Banach theorem, polar duals, matrix convex sets, and projections of free spectrahedra}, J. Eur. Math. Soc. (JEMS) 19:6 (2017), 1845--1897.

\bibitem{HKMS19} J.W. Helton, I. Klep, S. McCullough, M. Schweighofer, \textit{Dilations, Linear Matrix Inequalities, the Matrix Cube Problem and Beta Distributions}, Amer. Math. Soc., Providence 2019.
		
	
\bibitem{HM12} J.W. Helton, S. McCullough, \textit{Every convex free basic semi-algebraic set has an LMI representation}, Ann.~of Math. (2) 176 (2012), 979--1013.



\bibitem{Horo_1} P.~Horodecki. \textit{Separability criterion and inseparable mixed states with positive partial transposition}, Physics Letters A 232:5 (1997), 333-339.
		

\bibitem{Kav14} A.~Kavruk, \textit{Nuclearity related properties in operator systems}, J. Operator Theory 71:1 (2014), 95--156.

\bibitem{KPTT13} A.~Kavruk, V.~Paulsen, I.~Todorov, M.~Tomforde, \textit{Quotients, exactness, and nuclearity in the operator system category}, Advances in Mathematics 235 (2013), 321--360.
		


\bibitem{kls14} C.~Kleski, \textit{Boundary representations and pure completely positive maps}, J. Operator Theory 71:1 (2014), 45--62.
		
\bibitem{Kr}T.~L.~Kriel, \textit{An introduction to matrix convex sets and free spectrahedra}, Complex Anal.~Oper. Theory 13 (2019), 3251--3335.
	

\bibitem{Ng-P-16} W. Ng, V. Paulsen, \textit{The SOH operator system}. J. Operator Theory 76:2 (2016), 285--305.


		
\bibitem{Pas22} B.~Passer, \textit{Complex free spectrahedra, absolute extreme points, and dilations}, Doc.~Math.~27 (2022), 1275--1297. 

\bibitem{Pas19} B.~Passer, \textit{Shape, scale, and minimality of matrix ranges},  Trans. Amer. Math. Soc. 372:2 (2019), 1451--1484.

\bibitem{PP21} B.~Passer, V.~Paulsen, \textit{Matrix range characterizations of operator system properties}, J. Operator Theory 85:2 (2021), 547--568.

\bibitem{PS19} B.~Passer, O.~M.~Shalit, \textit{Compressions of compact tuples}, Linear Algebra Appl. 564 (2019), 264-283.

\bibitem{PasShaSol18} B.~Passer, O.~M.~Shalit, B.~Solel, \textit{Minimal and maximal matrix convex sets}, J.~Funct.~Anal.~274 (2018), 3197--3253.

\bibitem{Paulsen_book} V.~Paulsen, \textit{Completely bounded maps and operator algebras}, Cambridge Stud.~Adv.~Math.~78, Cambridge University Press, Cambridge, 2002.


		

\bibitem{Sh21} O.~Shalit, \textit{Dilation Theory: A Guided Tour}. (2021) In: Bastos, M.A., Castro, L., Karlovich, A.Y. (eds) Operator Theory, Functional Analysis and Applications. Operator Theory: Advances and Applications, vol 282. Birkh{\"a}user, Cham. 

\bibitem{WW} C.~Webster, S.~Winkler, \textit{The Krein-Milman theorem in operator convexity}, Trans.~Amer. Math.~Soc.~351 (1999), 307--322.
		

\bibitem{Zal17} A.~Zalar, \textit{Operator positivestellensatze for noncommutative polynomials positive on matrix convex sets}, J. Math. Anal. Appl. 445 (2017), 32--80.

\bibitem{Zh14} D.~Zheng, \textit{The operator system generated by Cuntz isometries}, preprint, arXiv:1410.6950v2.

\end{thebibliography}
\end{document}